\theoremstyle{definition}
\newtheorem{dfn}{Definition}[section]
\newtheorem{lem}[dfn]{Lemma}
\newtheorem{cor}[dfn]{Corollary}
\newtheorem{prp}[dfn]{Proposition}
\newtheorem{theom}[dfn]{Theorem}
\newtheorem{rem}[dfn]{Remark}
\newtheorem{fct}[dfn]{Fact}
\newtheorem{ex}[dfn]{Example}
\newcommand{\mf}{\mathfrak}
\DeclareMathOperator{\End}{\sf End}
\DeclareMathOperator{\Hom}{\sf Hom}
\DeclareMathOperator{\tr}{\sf tr}
\DeclareMathOperator{\ch}{\sf ch}
\DeclareMathOperator{\id}{\sf id}
\DeclareMathOperator{\Ind}{Ind}
\DeclareMathOperator{\Obj}{\sf Obj}
\title[Equivalences between weight modules]
{Equivalences between weight modules
via $\mathcal{N}=2$ coset constructions}
\author{Ryo SATO}
\address{Graduate School of Mathematical Sciences,\\
The University of Tokyo \\
3-8-1 Komaba Meguro-ku Tokyo,\\
153-8914, Japan}
\email{rsato@ms.u-tokyo.ac.jp}
\keywords{Representation theory, vertex superalgebra, superconformal algebra}
\subjclass{17B10, 17B65, 17B69}
\begin{document}

\maketitle

\begin{abstract}
In this paper we introduce a variant of
weight modules for certain conformal vertex superalgebras
as an appropriate framework
of the $\mathcal{N}=2$ supersymmetric coset construction.
We call them weight-wise admissible modules.
Motivated by the work of Feigin-Semikhatov-Tipunin,
we give (block-wise) categorical equivalences between 
the categories of weight-wise admissible modules over $\widehat{\mf{sl}}_{2}$ and
the $\mathcal{N}=2$ superconformal algebra,
induced by the coset construction.

As an application, we obtain some character formulas
of modules over the $\mathcal{N}=2$ superconformal algebra.
\end{abstract}

\section{Introduction}

\subsection{Background}
The infinitesimal symmetry of
$\mathcal{N}=2$ supersymmetric conformal field theories
is desicribed by the $\mathcal{N}=2$ superconformal algebra (SCA)
introduced in \cite{adem76}, 
which is a supersymmetric generalization of the Virasoro algebra.
There are two inequivalent classes of the $\mathcal{N}=2$ SCA
called the untwisted sector and the twisted sector.
In this paper we only deal with the untwisted sector.
See \cite{iohara2010unitarizable} and references therein for the details.

In 1980s many physicists extensively studied unitarizable highest weight
modules over the $\mathcal{N}=2$ SCA
in order to construct models of rational supersymmetric conformal field theories.
Among those modules, the classification of irreducible ones 
is conjectured by \cite{boucher1986determinant},
and its mathematically rigorous proof is given by \cite{iohara2010unitarizable} 
for the untwisted sector.
In its proof the $\mathcal{N}=2$ supersymmetric coset construction 
\cite{DPYZ86} (and \cite{KS89} for general affine Lie algebras)
 plays an important role to construct some specific unitarizable modules, 
the so-called minimal unitary series of the $\mathcal{N}=2$ SCA.
In \cite{DPYZ86} they constructed
all the minimal unitary series as direct summands of the tensor products of 
unitarizable highest weight $\widehat{\mf{sl}}_{2}$-modules
and a free fermionic Fock module.
This construction is analogous to the Goddard-Kent-Olive coset constructions
for the case of $\mathcal{N}=0$ or $1$ in \cite{GKO86}
and can be reformulated
in terms of vertex superalgebras.

For a vertex superalgebra $V$ and its vertex subsuperalgebra $W$,
we denote by ${\sf Com}(W,V)$ the commutant 
vertex superalgebra of $W$ in $V$ (see Appendix \ref{Cvertex superalgebra} for the definition).
Assume that $V$ is the tensor product of
the affine vertex operator algebra (VOA) 
associated with $\widehat{\mf{sl}}_{2}$
and the free fermionic vertex operator superalgebra (VOSA), 
and $W$ is a certain vertex subsuperalgebra which is isomorphic to 
the Heisenberg VOA of rank $1$ (see Remark \ref{HeisVOA} for the definition). 
Then it is known that the commutant vertex superalgebra ${\sf Com}(W,V)$ 
is isomorphic to the $\mathcal{N}=2$ VOSA (see \cite[\S8.2]{CREUTZIG2019396} and Corollary \ref{CST}).
It is also known that there exists a vertex superalgebra homomorphism
${\sf Com}(W,V)\otimes W\to V;\,A\otimes B\mapsto A_{(-1)}B$
(see e.g.\,\cite[Proposition 4.4.1]{matsuo17036axioms}).
In particular, the above $\mathcal{N}=2$ coset construction
is regarded as a theory of branching rules with respect to the above homomorphism
 when both the affine VOA and the corresponding $\mathcal{N}=2$ VOSA
are regular in the sense of \cite{dong1997regularity}.
See \cite{Ad01} and \cite[Example 5]{creutzig2016schur} for detail.

On the contrary, if these VOSAs are not regular, the complete reducibility 
of modules fails in general, and
the pullback action induced by
the above homomorphism is hard to analyze.
In \cite{FST98} B. Feigin, A. Semikhatov and I. Tipunin discovered another
coset construction, which is given by interchanging the roles
of the affine VOA and the $\mathcal{N}=2$ VOSA
(see also \cite{Ad99}).
Namely, they assume that $V$ is the tensor product of the $\mathcal{N}=2$ VOSA
and a certain negative-definite lattice vertex superalgebra, and
$W$ is again a certain vertex subsuperalgebra which is isomorphic to the Heisenberg VOA of rank $1$.
Then the commutant vertex superalgebra ${\sf Com}(W,V)$ is isomorphic to 
the affine VOA associated with $\mf{sl}_{2}$ (see Corollary \ref{CST}).
By using these bidirectional constructions,
they investigated an `equivalence' between 
more general weight modules over $\widehat{\mf{sl}}_{2}$
and over the $\mathcal{N}=2$ SCA, which works even for 
the irregular cases.
See \cite[Section IV]{FST98} for the details.
However, mathematically rigorous descriptions of the
`equivalence' in \cite{FST98} and explicit proofs are not given in the literature
to the best of our knowledge.

\subsection{Main result}
In this paper we introduce a natural framework 
including all the weight modules considered in \cite{FST98} (cf.\,\cite{creutzig2016schur}).
First, we assume that a conformal vertex superalgebra $V$ contains
a vertex subalgebra which is isomorphic to
a Heisenberg vertex algebra $\mathcal{F}$ (see \S \ref{general}),
and fix such a subalgebra.
Roughly speaking, we call a weak $V$-module $M$ 
a \emph{weight-wise admissible module}
if $M$ decomposes into a direct sum of Heisenberg Fock modules
as a weak $\mathcal{F}$-module (see Section 3 for the precise definition).
Note that $M$ is admissible as a weak $\mathcal{F}$-module 
in the sense of \cite{dong1997regularity}.
We denote the category of such modules by $\mathcal{C}_{V}$.

When $V$ is an affine VOA associated with $\widehat{\mf{sl}}_{2}$ 
or an $\mathcal{N}=2$ VOSA, 
it contains the Heisenberg vertex algebra of rank $1$.
We can verify that the category $\mathcal{C}_{V}$
decomposes into a direct sum of certain full subcategories
which we call \emph{blocks} (Definition \ref{block}).
Now we state the main result: 
we construct the functors which establish
equivalences as $\mathbb{C}$-linear abelian categories
between such blocks of $\widehat{\mf{sl}}_{2}$ and the $\mathcal{N}=2$ SCA
(Theorem \ref{main} and \ref{mainsimple}).
In addition, we prove that these functors are ``spectral flow equivariant'' (Corollary \ref{sfeq}).

As an application, we obtain
new character formulas of
weight-wise admissble modules over the $\mathcal{N}=2$ SCA (Theorem \ref{cForm}).
More precisely, these formulas are written
in terms of the characters of the corresponding 
$\widehat{\mf{sl}}_{2}$-modules by the previous equivalence.
In particular, we succeed to reprove
the character formulas of the `admissible' $\mathcal{N}=2$ modules
of central charge $c=3\left(1-\frac{2p'}{p}\right)
\ (p\geq2,\,p'\geq1)$
in the sense of \cite{FSST99} (see Theorem \ref{FSSTCF}). 

Note that, if the VOSAs are regular, weight-wise admissible modules 
over the affine simple VOA and the $\mathcal{N}=2$ simple VOSA
coincide with (ordinary) modules 
over VOSAs in the sense of \cite{kac1994vertex}.
In this sense, the notion of weight-wise admissible modules is
a natural generalization of that of ordinary modules over VOSAs.

\subsection{Further problems}
Let us explain some issues which are related with our results.
Among non-unitary modules over affine Lie algebras, 
the Kac-Wakimoto admissible modules at 
admissible levels are distinguished by
their modular invariant properties \cite{KW88}.
At the admissible levels,
weight-wise admissible modules naturally appear in the study 
of ``fusion products'' from a viewpoint of \cite{G01}
(see also \cite{feigin1994fusion}, \cite{dong1997vertex}).
The Wess-Zumino-Witten models at these levels are believed to relate with
the so-called logarithmic conformal field theories,
which include non-simple indecomposable modules and 
allow logarithmic singularities 
in the corresponding conformal blocks.
For instance, the `logarithmic' indecomposable module
over $\widehat{\mf{sl}}_{2}$
constructed in \cite[Proposition 8.1]{adamovic2009lattice}
gives a specific example of weight-wise admissible modules. 

Unfortunately, the simple affine VOAs at the admissible levels
are not $C_{2}$-cofinite in general
(see \cite[Proposition 3.4.1]{AM95}
for the $\widehat{\mf{sl}}_{2}$-case).
In particular, the characters of some irreducible non-highest weight modules
over the admissble affine VOA are not convergent 
(see \cite[Corollary 5]{creutzig2013modular} for example).
Contrary to this, by our character formula in Theorem \ref{cForm},
we can see that
the corresponding characters in the $\mathcal{N}=2$ side are
absolutely convergent in some region (see Remark \ref{divergent} and Proposition \ref{cpair}).
We expect that the study of the modular property of these functions may be
the first step to reveal the relationship between 
the modular invariance and the fusion rules at the admissible levels,
for example, a modification of Verlinde's formula initiated by \cite{awata1992fusion}
(see also \cite{G01}, \cite{creutzig2013modular}).

\subsection{Structure of the paper}
We organize this paper as follows.
In Section 2, after recalling some relevant facts about 
vertex superalgebras,
we give a brief review of the $\mathcal{N}=2$ supersymmetric
coset constructions.
Section 3 is devoted to the definition of weight-wise admissible module
over conformal vertex superalgebras with certain additional conditions.
We present our main result, categorical equivalences between blocks, in Section 4
and prove it in Section 5.
In Section 6, we discuss a relationship
between the functors constructed in Section 4
and the spectral flow automorphisms
of $\widehat{\mf{sl}}_{2}$ and the $\mathcal{N}=2$ SCA
(see Appendix \ref{SFA} for the definition).
In Section 7, we present some applications of the main result.

\medskip
{\bf Acknowledgments: }
The author would like to
express his gratitude to Tomoyuki Arakawa,
Kenji Iohara and Yoshihisa Saito 
for helpful discussions and comments.
He also thanks Hironori Oya for many helpful discussions
and Sebastiano Carpi for letting him know the references.
This work is supported by the Program for Leading Graduate
Schools, MEXT, Japan.

\section{Preliminaries}\label{Prelim}

In this section we recall some basic facts about vertex superalgebras
(see the book \cite{Kac97V} for details) and 
the vertex superalgebraic formulation of the $\mathcal{N}=2$
supersymmetric coset constructions.

\subsection{VOA associated with $\widehat{\mf{sl}}_{2}$}

Let $\mf{sl}_{2}$ be 
the simple Lie algebra of type $A_{1}$ and
$$E=
\begin{pmatrix}
0 & 1 \\
0 & 0
\end{pmatrix}
,\ 
H=
\begin{pmatrix}
1 & 0 \\
0 & -1
\end{pmatrix}
,\ 
F=
\begin{pmatrix}
0 & 0 \\
1 & 0
\end{pmatrix}
$$
the standard basis of $\mf{sl}_{2}$.
The normalized invariant form $B_{0}$ on $\mf{sl}_{2}$ is given by
$B_{0}(X,Y)=\tr(XY)$ for $X, Y\in\mf{sl}_{2}$.
The affine Lie algebra $\widehat{\mf{sl}}_{2}$ is nothing but the affinization
$(\widehat{\mf{sl}}_{2})_{B_{0}}$ defined in \S \ref{Vaff}.

Throughout this paper, 
we always assume $k\neq-2$ and
use the following notations:
\begin{equation*}
\bigl(V_{k}(\mf{sl}_{2}),L_{k}(\mf{sl}_{2}),\mathbf{1}^{k},\omega^{\mf{sl}_{2}}\bigr)
:=\bigl(V_{kB_{0}}(\mf{sl}_{2}),L_{kB_{0}}(\mf{sl}_{2}),\mathbf{1}_{kB_{0}},\omega_{\mf{sl}_{2},k}\bigr).
\end{equation*}
See \S \ref{Vaff} for the definition.

To simplify notation, we write $\mathbf{H},\mathbf{E}$ and 
$\mathbf{F}$ for $H_{-1}{\bf 1}^{k},E_{-1}{\bf 1}^{k}$ 
and $F_{-1}{\bf 1}^{k}$, respectively.
We also write $L^{\mf{sl}_{2}}_{n}$
for the linear operator $\omega^{\mf{sl}_{2}}_{(n+1)}$.

For $j\in\mathbb{C}$, let $M_{j,k}$
be the Verma module of highest weight $(k-2j)\Lambda_{0}+2j\Lambda_{1}$ and
$\ket{j,k}$ the highest weight vector of $M_{j,k}$,
where $\Lambda_{i}$ is the $i$-th fundamental weight of $\widehat{\mf{sl}}_{2}$.
It is clear that $V_{k}(\mf{sl}_{2})$ is a quotient $\widehat{\mf{sl}}_{2}$-module of $M_{0,k}$
and $L_{k}(\mf{sl}_{2})$ coincides with the simple quotient of $M_{0,k}$.

\subsection{VOSA associated with the $\mathcal{N}=2$ SCA}\label{Vvir}

The Neveu-Schwarz sector of the $\mathcal{N}=2$ 
superconformal algebra is the Lie superalgebra
\begin{equation*}
\mf{ns}_{2}=
\bigoplus_{n\in\mathbb{Z}}\mathbb{C}
L_{n}\oplus
\bigoplus_{n\in\mathbb{Z}}\mathbb{C}
J_{n}\oplus
\bigoplus_{r\in\mathbb{Z}+\frac{1}{2}}\mathbb{C}
G^{+}_{r}\oplus
\bigoplus_{r\in\mathbb{Z}+\frac{1}{2}}\mathbb{C}
G^{-}_{r}\oplus
\mathbb{C}C
\end{equation*}
whose $\mathbb{Z}_{2}$-grading is given by 
\begin{equation*}
(\mf{ns}_{2})^{\bar{0}}=
\bigoplus_{n\in\mathbb{Z}}\mathbb{C}
L_{n}\oplus
\bigoplus_{n\in\mathbb{Z}}\mathbb{C}
J_{n}\oplus\mathbb{C}C,\ \ 
(\mf{ns}_{2})^{\bar{1}}=
\bigoplus_{r\in\mathbb{Z}+\frac{1}{2}}\mathbb{C}
G^{+}_{r}\oplus
\bigoplus_{r\in\mathbb{Z}+\frac{1}{2}}\mathbb{C}
G^{-}_{r}
\end{equation*}
with the following 
(anti-)commutation relations:
\begin{flalign*}
&
[L_{n},L_{m}]=(n-m)L_{n+m}+\frac{1}{12}(n^{3}-n)C\delta_{n+m,0},\\ 
&
[L_{n},J_{m}]=-mJ_{n+m}, \ 
[L_{n},G^{\pm}_{r}]=\left(\frac{n}{2}-r\right)G^{\pm}_{n+r}, \\
&
[J_{n},J_{m}]=\frac{n}{3}C\delta_{n+m,0},\ \ 
[J_{n},G^{\pm}_{r}]=\pm G^{\pm}_{n+r}, \\
&
[G^{+}_{r},G^{-}_{s}]=
2L_{r+s}+(r-s)J_{r+s}+
\frac{1}{3}\left(r^{2}-\frac{1}{4}\right)C\delta_{r+s,0}, \\
&
[G^{+}_{r},G^{+}_{s}]=
[\,G^{-}_{r},G^{-}_{s}]=0,\ \ 
[\mf{ns}_{2},C]=\{0\},
\end{flalign*}
for $n,m\in\mathbb{Z}$ and $r,s\in\mathbb{Z}+\frac{1}{2}$.

Let $\mf{ns}_{2}=
(\mf{ns}_{2})_{+}\oplus
(\mf{ns}_{2})_{0}\oplus
(\mf{ns}_{2})_{-}$
be the triangular decomposition of $\mf{ns}_{2}$,
where 
\begin{equation*}
\begin{array}{lcl}
(\mf{ns}_{2})_{+} & := &
\displaystyle
\bigoplus_{n>0}\mathbb{C}L_{n}\oplus
\bigoplus_{n>0}\mathbb{C}J_{n}\oplus
\bigoplus_{r>0}\mathbb{C}G^{+}_{r}\oplus
\bigoplus_{r>0}\mathbb{C}G^{-}_{r},\\
(\mf{ns}_{2})_{-} & := &
\displaystyle
\bigoplus_{n<0}\mathbb{C}L_{n}\oplus
\bigoplus_{n<0}\mathbb{C}J_{n}\oplus
\bigoplus_{r<0}\mathbb{C}G^{+}_{r}\oplus
\bigoplus_{r<0}\mathbb{C}G^{-}_{r}
,\\
(\mf{ns}_{2})_{0} & := &
\displaystyle
\mathbb{C}L_{0}\oplus
\mathbb{C}J_{0}
\oplus\mathbb{C}C,
\end{array}
\end{equation*}
and set $(\mf{ns}_{2})_{\geq0}:=
(\mf{ns}_{2})_{+}\oplus(\mf{ns}_{2})_{0}$.
For $h,j,c\in\mathbb{C}$, let $\mathbb{C}_{h,j,c}$
be the 1-dimensional $(\mf{ns}_{2})_{\geq0}$-module
defined by $(\mf{ns}_{2})_{+}.1:=\{0\},\ 
L_{0}.1:=h,\ J_{0}.1:=j$ and
$C.1:=c$.
We call the induced module 
$\mathcal{M}_{h,j,c}:=
\Ind_{(\mf{ns}_{2})_{\geq0}}^{\mf{ns}_{2}}
\mathbb{C}_{h,j,c}$
 the \emph{Verma module} 
of $\mf{ns}_{2}$.
Let us denote 
by $\ket{h,j,c}^{\mf{ns}_{2}}$ 
the canonical generator $1\otimes1\in\mathcal{M}_{h,j,c}$
and by $\mathcal{L}_{h,j,c}$ the simple quotient of $\mathcal{M}_{h,j,c}$.

When $h=\pm\frac{j}{2}$, 
the quotient modules 
$$\textstyle
\mathcal{M}^{\pm}_{j,c}:=\mathcal{M}_{\pm\frac{j}{2},j,c}
\big/U(\mf{ns}_{2})G^{\pm}_{-\frac{1}{2}}
\ket{\pm\frac{j}{2},j,c}^{\mf{ns}_{2}}$$
are called
the \emph{chiral Verma module}
and the \emph{anti-chiral Verma module}.
If $h=j=0$, according to \cite[Proposition 1.1]{Ad99}, the quotient module 
$$V_{c}(\mf{ns}_{2}):=
\mathcal{M}_{0,0,c}\big/\bigl(U(\mf{ns}_{2})
G^{+}_{-\frac{1}{2}}\ket{0,0,c}^{\mf{ns}_{2}}
+U(\mf{ns}_{2})
G^{-}_{-\frac{1}{2}}\ket{0,0,c}^{\mf{ns}_{2}}\bigr)$$
has a VOSA structure of central charge $c$,
where the vacuum ${\bf 1}^{c}$ is given by
the image of $\ket{0,0,c}^{\mf{ns}_{2}}$ 
in $V_{c}(\mf{ns}_{2})$ and
the conformal vector $\omega^{\mf{ns}_{2}}$ is given by $L_{-2}{\bf 1}^{c}.$
For simplicity of notation, we write $\mathbf{J}$ and $\mathbf{G}^{\pm}$
for $J_{-1}{\bf 1}^{c}$ and 
$G^{\pm}_{-\frac{3}{2}}{\bf 1}^{c}$, respectively.

Let $L_{c}(\mf{ns}_{2})$ be the simple
quotient vertex superalgebra of $V_{c}(\mf{ns}_{2})$.
Note that $L_{c}(\mf{ns}_{2})=\mathcal{L}_{0,0,c}$
as an $\mf{ns}_{2}$-module.
Similarly to the affine case (see \ref{Vaff}),
we write $\mathbf{X}$
 for the image of $\mathbf{X}\in V_{c}(\mf{ns}_{2})$
 in $L_{c}(\mf{ns}_{2})$.

\subsection{Heisenberg VOAs and 
lattice vertex superalgebras}

In order to formulate the $\mathcal{N}=2$ supersymmetric
 coset constructions,
we have to introduce the Heisenberg VOAs $\mathcal{F}^{\pm}$
 and the lattice conformal vertex superalgebras $V^{\pm}$
associated with certain non-degenerate integral lattices.
See \cite{Kac97V} for the details.

Let $Q^{\pm}=\mathbb{Z}\alpha^{\pm}$ be the
integral lattices with the $\mathbb{Z}$-bilinear forms defined by
$(\alpha^{\pm}|\alpha^{\pm})=\pm1$, respectively.
Define a non-degenerate $\mathbb{C}$-bilinear form
$B^{\pm}$ on $\mf{t}^{\pm}:=Q^{\pm}\otimes\mathbb{C}$ 
by linearly extending the form on $Q^{\pm}$.
Then we fix the following notations:
\begin{equation*}
(\mathcal{F}^{\pm},\ket{0}^{\pm}\!\!,\,\omega^{\pm}):=
\bigl(V_{B^{\pm}}(\mf{t}^{\pm}),\mathbf{1}_{B^{\pm}},\omega_{\mf{t}^{\pm},1}\bigr).
\end{equation*}
See \S \ref{Vaff} for the definition.

To simplify the notation, we write ${\boldsymbol \alpha}^{\pm}$ for 
$\alpha^{\pm}_{-1}\ket{0}^{\pm}\in\mathcal{F}^{\pm}$
and $L^{\pm}_{n}$ for
the linear operator $\omega^{\pm}_{(n+1)}$.
For $\lambda\in\mathbb{C}\simeq(\mf{t}^{\pm})^{*}$, we denote by $\ket{\lambda}^{\pm}$
the highest weight vector of the Heisenberg Fock module $\mathcal{F}^{\pm}_{\lambda}$.

\begin{rem}\label{HeisVOA}
The assignment ${\boldsymbol \alpha}^{-}\mapsto i{\boldsymbol \alpha}^{+}$
uniquely extends to a VOA isomorphism from $\mathcal{F}^{-}$ to $\mathcal{F}^{+}$.
We call the isomorphism class of $\mathcal{F}^{+}$ the Heisenberg VOA of rank $1$.
\end{rem}

We now recall the simple lattice vertex superalgebras
associated
with non-degenerate integral lattices $Q^{\pm}$.
Let $\mathbb{C}[Q^{\pm}]=
\bigoplus_{n\in\mathbb{Z}}\mathbb{C}e^{n\alpha^{\pm}}$
be the group algebra of $Q^{\pm}$ over $\mathbb{C}$.
It is known that there exists a simple vertex superalgebra structure
on the $\mathbb{Z}_{2}$-graded vector space
$V_{Q^{\pm}}:=\mathbb{C}[Q^{\pm}]\otimes_{\mathbb{C}}
\mathbb{C}[\alpha^{\pm}_{-n}|n>0]$
with the vacuum vector
${\bf 1}^{\pm}:=e^{0}\otimes1$.
Here the $\mathbb{Z}_{2}$-grading on $V_{Q^{\pm}}$ is defined by
\begin{equation*}
(V_{Q^{\pm}})^{\bar{i}}:=
\bigoplus_{(\beta|\beta)\in\{i\}+2\mathbb{Z}}
\mathbb{C}e^{\beta}\otimes_{\mathbb{C}}\mathbb{C}[\alpha^{\pm}_{-n}|n>0]
\end{equation*}
for $i\in\{0,1\}$.
See \cite[Theorem 5.5]{Kac97V} for the details.

We take
$\omega^{Q^{\pm}}:=e^{0}\otimes
\left(\pm\frac{1}{2}(\alpha^{\pm}_{-1})^{2}\right)$
as a confomal vector of $V_{Q^{\pm}}$
of central charge $1$.
Denote by $V^{\pm}$ the corresponding
conformal vertex superalgebra and
by $L^{Q^{\pm}}_{n}$ the linear operator
$\omega^{Q^{\pm}}_{(n+1)}$.
Note that $V^{-}$ is not a VOSA.
In what follows, we abbreviate $e^{0}\otimes u$ as $u$
and $e^{n\alpha^{\pm}}\otimes1$ as $e^{n\alpha^{\pm}}$,
where $u\in\mathbb{C}[\alpha^{\pm}_{-n}|n>0]$.

\subsection{Formulation of $\mathcal{N}=2$ coset constructions}

We give a review of the vertex superalgebraic formulation 
of the $\mathcal{N}=2$ supersymmetric 
coset construction \cite{DPYZ86}, \cite{KS89} (see also \cite{hosono1991lie})
 and its ``inverse'' coset construction \cite{FST98}.
The following reformulation is due to \cite{Ad99}.
In what follows, we always take $\kappa\in\mathbb{C}^{\times}$
and assume $\displaystyle k=-2+\frac{2}{\kappa^{2}}$.

\begin{fct}\label{coset1}
(1) There exists a unique homomorphism of conformal 
	vertex superalgebras
$\iota_{+}\colon V_{c_{k}}(\mf{ns}_{2})\otimes \mathcal{F}^{+}
\rightarrow
V_{k}(\mf{sl}_{2})\otimes V^{+}$
	such that
\begin{equation*}
\begin{array}{rcl}
	\mathbf{G}^{+}\otimes\ket{0}^{+}
	&\mapsto& \textstyle
	\kappa\mathbf{E}\otimes e^{\alpha^{+}}, \\ 
	\mathbf{G}^{-}\otimes\ket{0}^{+}
	&\mapsto&
	\kappa\mathbf{F}\otimes e^{-\alpha^{+}}, \\
	\mathbf{J}\otimes\ket{0}^{+}
	&\mapsto&
	\frac{\ \kappa^{2}}{2}\mathbf{H}\otimes{\bf 1}^{+}
	+\frac{c_{k}}{3}{\bf 1}^{k}\otimes\alpha^{+}_{-1}, \\
	{\bf 1}^{c_{k}}\otimes{\boldsymbol \alpha}^{+}	
	&\mapsto& \textstyle
	\kappa
	\bigl(\frac{1}{2}\mathbf{H}\otimes{\bf 1}^{+}-
	{\bf 1}^{k}\otimes\alpha^{+}_{-1}\bigr).
\end{array}
\end{equation*}
(2) There exists a unique homomorphism of conformal 
	vertex superalgebras
$\iota_{-}\colon V_{k}(\mf{sl}_{2})\otimes\mathcal{F}^{-}
\rightarrow
V_{c_{k}}(\mf{ns}_{2})\otimes V^{-}$ such that
\begin{equation*}
\begin{array}{rcl}
	\mathbf{E}\otimes\ket{0}^{-}
	&\mapsto& \textstyle
	\kappa^{-1}\mathbf{G}^{+}\otimes e^{-\alpha^{-}}, \\ 
	\mathbf{F}\otimes\ket{0}^{-}
	&\mapsto&
	\kappa^{-1}\mathbf{G}^{-}\otimes e^{\alpha^{-}}, \\ 
	\mathbf{H}\otimes\ket{0}^{-}
	&\mapsto&
	\frac{2}{\ \kappa^{2}}\mathbf{J}\otimes{\bf 1}^{-}
	-k{\bf 1}^{c_{k}}\otimes\alpha^{-}_{-1}, \\ 
	{\bf 1}^{k}\otimes{\boldsymbol \alpha}^{-}
	&\mapsto& \textstyle
	\kappa^{-1}
	\bigl(\mathbf{J}\otimes{\bf 1}^{-}
	-{\bf 1}^{c_{k}}\otimes\alpha^{-}_{-1}\bigr).
\end{array}
\end{equation*}
\end{fct}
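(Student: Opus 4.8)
The proof (following \cite{Ad99}) proceeds by the same method for both parts: one realizes each source algebra as a vertex superalgebra that is \emph{strongly generated} by the handful of vectors whose images are prescribed, exhibits the corresponding vectors in the target with the matching operator product expansions, and invokes the relevant universal property to produce a homomorphism of conformal vertex superalgebras, which is then automatically unique. Uniqueness is in fact immediate: in $(1)$ the vectors $\mathbf{J}\otimes\ket{0}^{+},\mathbf{G}^{+}\otimes\ket{0}^{+},\mathbf{G}^{-}\otimes\ket{0}^{+},\mathbf{1}^{c_{k}}\otimes\boldsymbol{\alpha}^{+}$ already strongly generate $V_{c_{k}}(\mf{ns}_{2})\otimes\mathcal{F}^{+}$ (since $\mathcal{F}^{+}$ is strongly generated by $\boldsymbol{\alpha}^{+}$ and, by \cite[Proposition 1.1]{Ad99}, $V_{c_{k}}(\mf{ns}_{2})$ is strongly generated by $\mathbf{J},\mathbf{G}^{+},\mathbf{G}^{-}$), and symmetrically in $(2)$; so any homomorphism of conformal vertex superalgebras with the stated values is determined by them.

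\emph{Existence in Part $(1)$.} Note first that $\omega^{\mf{ns}_{2}}$ lies in the vertex subalgebra generated by $\mathbf{J},\mathbf{G}^{\pm}$: indeed $G^{-}_{-1/2}\mathbf{G}^{+}=(2L_{-2}-J_{-2})\mathbf{1}^{c_{k}}=2\,\omega^{\mf{ns}_{2}}-T\mathbf{J}$, so $\omega^{\mf{ns}_{2}}=\tfrac12\bigl(G^{-}_{-1/2}\mathbf{G}^{+}+T\mathbf{J}\bigr)$. By the universal property of $V_{c_{k}}(\mf{ns}_{2})\otimes\mathcal{F}^{+}$, a homomorphism of conformal vertex superalgebras into $V_{k}(\mf{sl}_{2})\otimes V^{+}$ exists provided the four candidate images $\kappa\mathbf{E}\otimes e^{\alpha^{+}}$, $\kappa\mathbf{F}\otimes e^{-\alpha^{+}}$, $\tfrac{\kappa^{2}}{2}\mathbf{H}\otimes\mathbf{1}^{+}+\tfrac{c_{k}}{3}\mathbf{1}^{k}\otimes\alpha^{+}_{-1}$ and $\kappa\bigl(\tfrac12\mathbf{H}\otimes\mathbf{1}^{+}-\mathbf{1}^{k}\otimes\alpha^{+}_{-1}\bigr)$ (i) have parities even, odd, odd, even (here $e^{\pm\alpha^{+}}$ is odd because $(\alpha^{+}|\alpha^{+})=1$) and conformal weights $1,\tfrac32,\tfrac32,1$ (here $e^{\pm\alpha^{+}}$ has weight $\tfrac12(\alpha^{+}|\alpha^{+})=\tfrac12$), (ii) have OPEs realizing the $\mf{ns}_{2}$ relations at central charge $c_{k}$, a level-$1$ Heisenberg $[\alpha^{+}_{m},\alpha^{+}_{n}]=m\delta_{m+n,0}$, and mutual commutativity of the two families, and (iii) carry the expression for $\omega^{\mf{ns}_{2}}\otimes\ket{0}^{+}+\mathbf{1}^{c_{k}}\otimes\omega^{+}$ in terms of these strong generators to $\omega^{\mf{sl}_{2}}\otimes\mathbf{1}^{+}+\mathbf{1}^{k}\otimes\omega^{Q^{+}}$. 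These are finitely many OPE computations in $V_{k}(\mf{sl}_{2})\otimes V^{+}$, using the standard OPEs of the level-$k$ currents $\mathbf{E},\mathbf{F},\mathbf{H}$ (with $(E|F)=1$, $(H|H)=2$, $[E,F]=H$) and of the lattice operators $e^{\pm\alpha^{+}}$ and the Heisenberg field $\alpha^{+}$. The hypothesis $k=-2+\tfrac{2}{\kappa^{2}}$, equivalently $c_{k}=3(1-\kappa^{2})$, is precisely what makes the scalars match; for instance the double-pole coefficient of the self-OPE of the image of $\mathbf{J}$ is $\tfrac{\kappa^{4}}{4}(H|H)k+\tfrac{c_{k}^{2}}{9}=\tfrac{\kappa^{4}k}{2}+\tfrac{c_{k}^{2}}{9}=\kappa^{2}(1-\kappa^{2})+(1-\kappa^{2})^{2}=1-\kappa^{2}=\tfrac{c_{k}}{3}$, as required by $[J_{m},J_{n}]=\tfrac{m}{3}C\delta_{m+n,0}$, and the $(z-w)^{-3}$ term of the image of the $\mathbf{G}^{+}$--$\mathbf{G}^{-}$ OPE comes out to $\kappa^{2}k=2(1-\kappa^{2})=\tfrac{2c_{k}}{3}$, as required; the remaining relations go through in the same way.

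\emph{Existence in Part $(2)$.} The argument is entirely parallel once one observes that $V_{k}(\mf{sl}_{2})=M_{0,k}/U(\widehat{\mf{sl}}_{2})F_{0}\ket{0,k}$ is precisely the universal affine VOA $V^{k}(\mf{sl}_{2})$ (the relation $F_{0}\ket{0,k}=0$ being automatic in any vertex algebra, since $\mathbf{F}_{(0)}\mathbf{1}=0$), hence is strongly generated by $\mathbf{E},\mathbf{F},\mathbf{H}$ with $\omega^{\mf{sl}_{2}}$ the Sugawara vector (well defined as $k\neq-2$); thus a homomorphism of conformal vertex superalgebras out of $V_{k}(\mf{sl}_{2})\otimes\mathcal{F}^{-}$ is determined by, and exists for any, even weight-$1$ vectors whose modes realize $\widehat{\mf{sl}}_{2}$ at level $k$ together with an even weight-$1$ vector generating a level-$(-1)$ Heisenberg commuting with them and sending the conformal vector correctly. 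One checks that $\kappa^{-1}\mathbf{G}^{+}\otimes e^{-\alpha^{-}}$, $\kappa^{-1}\mathbf{G}^{-}\otimes e^{\alpha^{-}}$, $\tfrac{2}{\kappa^{2}}\mathbf{J}\otimes\mathbf{1}^{-}-k\,\mathbf{1}^{c_{k}}\otimes\alpha^{-}_{-1}$, $\kappa^{-1}\bigl(\mathbf{J}\otimes\mathbf{1}^{-}-\mathbf{1}^{c_{k}}\otimes\alpha^{-}_{-1}\bigr)$ have the right parity (since $\mathbf{G}^{\pm}$ and $e^{\mp\alpha^{-}}$ are both odd, because $(\alpha^{-}|\alpha^{-})=-1$ is odd, the tensor products are even) and conformal weight $1$ (the weight $\tfrac12(\alpha^{-}|\alpha^{-})=-\tfrac12$ of $e^{\pm\alpha^{-}}$ offsets the weight $\tfrac32$ of $\mathbf{G}^{\pm}$), that their OPEs realize $\widehat{\mf{sl}}_{2}$ at level $k$ plus a level-$(-1)$ Heisenberg with the required commutativity, and that the conformal vectors correspond; again the central terms agree exactly when $k=-2+\tfrac{2}{\kappa^{2}}$. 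That $V^{-}$ is not a VOSA is no obstruction, since the entire discussion takes place in the category of conformal vertex superalgebras, in which these universal properties still hold.

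\emph{The main obstacle.} The genuinely delicate step is the conformal-vector compatibility, namely verifying that the Virasoro field emerging on the target side — from the $\mathbf{G}^{+}$--$\mathbf{G}^{-}$ OPE in Part $(1)$, and from the affine Sugawara construction applied to the images of $\mathbf{E},\mathbf{F},\mathbf{H}$ in Part $(2)$ — agrees on the nose with $\omega^{\mf{sl}_{2}}\otimes\mathbf{1}^{+}+\mathbf{1}^{k}\otimes\omega^{Q^{+}}$ (respectively $\omega^{\mf{ns}_{2}}\otimes\mathbf{1}^{-}+\mathbf{1}^{c_{k}}\otimes\omega^{Q^{-}}$). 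This forces one to expand the products $e^{\alpha^{+}}{}_{(n)}e^{-\alpha^{+}}$ and $e^{-\alpha^{-}}{}_{(n)}e^{\alpha^{-}}$ explicitly in the lattice vertex superalgebra — keeping careful track of the $2$-cocycle $\varepsilon\colon Q^{\pm}\times Q^{\pm}\to\{\pm1\}$ and of the Heisenberg exponential factors — and to recombine the resulting normally ordered expressions ($:\!(\alpha^{\pm})^{2}\!:$, $:\!\mathbf{H}\alpha^{+}\!:$, $:\!\mathbf{E}\mathbf{F}\!:$, etc.) with the affine currents, with constant attention to the Koszul signs attached to the odd operators $\mathbf{G}^{\pm}$ and $e^{\pm\alpha^{\pm}}$. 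Once these bookkeeping identities are secured, uniqueness is immediate from strong generation and existence follows from the universal properties recalled above, completing the proof.
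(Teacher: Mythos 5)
The paper offers no proof of this statement: it is labeled a ``Fact'' and attributed to \cite{Ad99} (the sentence preceding it reads ``The following reformulation is due to \cite{Ad99}''), so there is no in-paper argument against which to compare yours. That said, your reconstruction is correct and is exactly the kind of argument the cited reference uses: uniqueness from strong generation of $V_{c_{k}}(\mf{ns}_{2})\otimes\mathcal{F}^{+}$ by $\{\mathbf{J},\mathbf{G}^{\pm},\boldsymbol{\alpha}^{+}\}$ (and of $V_{k}(\mf{sl}_{2})\otimes\mathcal{F}^{-}$ by $\{\mathbf{E},\mathbf{F},\mathbf{H},\boldsymbol{\alpha}^{-}\}$), existence via OPE verification under the substitution $k=-2+\tfrac{2}{\kappa^{2}}$, $c_{k}=3(1-\kappa^{2})$, and the conformal-vector compatibility as the one nontrivial bookkeeping step. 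Your sample computations are right: the rewriting $\omega^{\mf{ns}_{2}}=\tfrac12\bigl(G^{-}_{-1/2}\mathbf{G}^{+}+T\mathbf{J}\bigr)$ holds in $V_{c_{k}}(\mf{ns}_{2})$, the parity/weight checks for $e^{\pm\alpha^{\pm}}$ match the paper's $\mathbb{Z}_{2}$-grading and conformal structure on $V^{\pm}$, the double pole of $\iota_{+}(\mathbf{J})$ with itself does come out to $\tfrac{\kappa^{4}k}{2}+\tfrac{c_{k}^{2}}{9}=1-\kappa^{2}=\tfrac{c_{k}}{3}$, and the $(z-w)^{-3}$ coefficient of $\iota_{+}(\mathbf{G}^{+})\,\iota_{+}(\mathbf{G}^{-})$ is $\kappa^{2}k=\tfrac{2c_{k}}{3}$. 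Your observation that $V_{k}(\mf{sl}_{2})$ as defined here (with $k\neq -2$) is just the universal affine VOA, so that the Sugawara construction and the universal property apply, is also accurate. In short, there is no gap, and the only difference from the paper is that the paper chooses to cite rather than prove.
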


In what follows, we always abbreviate
$A\otimes{\bf 1}_{W}$ and ${\bf 1}_{V}\otimes B$ in a tensor product
vertex superalgebra $V\otimes W$ as $A$ and $B$, respectively.

Let $\sigma$ be the symmetric braiding
of the category of $\mathbb{Z}_{2}$-graded vector spaces.
Then we define
$\iota_{\mf{sl}_{2}}:=(\iota_{+})_{12}\circ (\iota_{-})_{13}$
and
$\iota_{\mf{ns}_{2}}:=(\iota_{-})_{13}\circ(\iota_{+})_{12},$
where $(\iota_{+})_{12}:=\iota_{+}\otimes\id$ and
$(\iota_{-})_{13}:=(\id\otimes\sigma)\circ
(\iota_{-}\otimes\id)\circ(\id\otimes\sigma)$.

\begin{cor}\label{twist}
The mappings $\iota_{\mf{sl}_{2}}\colon V_{k}(\mf{sl}_{2})
\otimes \mathcal{F}^{+}\otimes\mathcal{F}^{-}
\rightarrow
V_{k}(\mf{sl}_{2})\otimes V^{+}\otimes V^{-}$
and
$\iota_{\mf{ns}_{2}}\colon V_{c_{k}}(\mf{ns}_{2})
\otimes \mathcal{F}^{+}\otimes\mathcal{F}^{-}
\rightarrow
V_{c_{k}}(\mf{ns}_{2})\otimes V^{+}\otimes V^{-}$
are morphisms of conformal vertex superalgebras.
\end{cor}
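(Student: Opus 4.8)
The plan is to deduce Corollary \ref{twist} purely formally from Fact \ref{coset1}, using that morphisms of conformal vertex superalgebras are preserved under composition, under tensoring with an identity morphism, and under conjugation by the symmetric braiding $\sigma$. Concretely I would invoke two standard facts. First, if $F\colon V\to V'$ and $G\colon W\to W'$ are morphisms of conformal vertex superalgebras, then $F\otimes G\colon V\otimes W\to V'\otimes W'$ is again one: it is even, preserves the vacuum, intertwines the state--field correspondences factorwise, and sends $\omega_{V}\otimes\mathbf{1}_{W}+\mathbf{1}_{V}\otimes\omega_{W}$ to $\omega_{V'}\otimes\mathbf{1}_{W'}+\mathbf{1}_{V'}\otimes\omega_{W'}$; in particular $F\otimes\id$ and $\id\otimes F$ are morphisms. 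Second, the symmetric braiding $\sigma_{V,W}\colon V\otimes W\to W\otimes V$, $v\otimes w\mapsto(-1)^{|v|\,|w|}w\otimes v$, is an isomorphism of conformal vertex superalgebras (this is standard; cf.\ \cite{Kac97V}), and it visibly preserves vacuum and conformal vectors.

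Granting these, I would simply unwind the definitions. Since $(\iota_{+})_{12}=\iota_{+}\otimes\id$ and $(\iota_{-})_{13}=(\id\otimes\sigma)\circ(\iota_{-}\otimes\id)\circ(\id\otimes\sigma)$, we have
\begin{equation*}
\iota_{\mf{sl}_{2}}=(\iota_{+}\otimes\id)\circ(\id\otimes\sigma)\circ(\iota_{-}\otimes\id)\circ(\id\otimes\sigma),
\end{equation*}
and $\iota_{\mf{ns}_{2}}$ is given by the same four factors with the two halves $(\iota_{+}\otimes\id)$ and $(\id\otimes\sigma)\circ(\iota_{-}\otimes\id)\circ(\id\otimes\sigma)$ composed in the opposite order. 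One then checks that the tensor slots line up. For $\iota_{\mf{sl}_{2}}$, starting from $V_{k}(\mf{sl}_{2})\otimes\mathcal{F}^{+}\otimes\mathcal{F}^{-}$ the first $\id\otimes\sigma$ produces $V_{k}(\mf{sl}_{2})\otimes\mathcal{F}^{-}\otimes\mathcal{F}^{+}$; then $\iota_{-}\otimes\id$, applicable by Fact \ref{coset1}(2), produces $V_{c_{k}}(\mf{ns}_{2})\otimes V^{-}\otimes\mathcal{F}^{+}$; then $\id\otimes\sigma$ produces $V_{c_{k}}(\mf{ns}_{2})\otimes\mathcal{F}^{+}\otimes V^{-}$; and finally $\iota_{+}\otimes\id$, applicable by Fact \ref{coset1}(1), produces $V_{k}(\mf{sl}_{2})\otimes V^{+}\otimes V^{-}$, exactly the asserted codomain. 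The check for $\iota_{\mf{ns}_{2}}$ is entirely parallel. Since $\iota_{+}$ and $\iota_{-}$ are morphisms of conformal vertex superalgebras by Fact \ref{coset1}, each of the four factors above is a morphism of conformal vertex superalgebras by the two standard facts, and hence so is each composite.

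I do not expect any genuine obstacle here: the argument is a formal verification. The only point requiring attention is the bookkeeping of which tensor slots the maps act on, together with the observation that the Koszul signs in $\sigma$ are precisely those making $V^{\pm}$ and $\mathcal{F}^{\pm}$ into $\mathbb{Z}_{2}$-graded vertex superalgebras, so that $\sigma$ is honestly an isomorphism of conformal vertex superalgebras in the super (not merely even) setting. In particular, no new computation with the explicit generator images of $\iota_{\pm}$ from Fact \ref{coset1} is needed.
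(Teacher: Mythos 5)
Your proposal is correct, and it matches the approach the paper implicitly takes: the corollary is stated without proof precisely because it is immediate from Fact \ref{coset1}, the definition of $\iota_{\mf{sl}_2}$ and $\iota_{\mf{ns}_2}$ as composites of $\iota_\pm\otimes\id$ and $\id\otimes\sigma$, and the standard facts that morphisms of conformal vertex superalgebras are closed under tensoring with $\id$ and under conjugation by the braiding $\sigma$. Your slot-by-slot bookkeeping of domains and codomains is accurate, and you correctly identify that no computation with the explicit generator images from Fact \ref{coset1} is required.
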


\section{Weight-wise admissible modules}\label{GW}

In this section we introduce the notion of weight-wise admissible modules.
The category of such modules is 
a variant of the ``category $\mathcal{O}$'' for a VOA
firstly introduced by \cite[Section 2]{dong1997regularity}.

\subsection{General notations}\label{general}

Let $(V=\bigoplus_{\Delta\in\mathbb{Q}}V_{\Delta},Y,{\bf 1},\omega)$ be a conformal vertex superalgebra.
Assume that there exists a finite-dimensional subspace
$\mf{h}$ of $V_{1}$ and a mapping 
$B\colon\mf{h}\times\mf{h}\to\mathbb{C}$ such that
the following OPE
$$Y(h;z)Y(h';w)\sim\frac{B(h,h'){\sf id}_{V}}{(z-w)^{2}}$$ 
holds for any $h,h'\in\mf{h}$.
Then $B$ is automatically symmetric and bilinear.
Then there exists a natural embedding
of $\mathcal{F}:=V_{B}(\mf{h})$ into $V$.
In what follows, we fix such a pair $(V,\mf{h})$.

\begin{dfn}\label{WWA}
A weak $V$-module $M$ is a \emph{weight-wise admissible $(V,\mf{h})$-module} if
\begin{enumerate}
\item We have the decomposition
\begin{equation}\label{decomp}
M=\bigoplus_{\lambda\in\mf{h}^{*}}M(\lambda),
\end{equation}
where 
$M(\lambda):=
\{m\in M\,|\,h_{(0)}m=\lambda(h)m\text{ for any }h\in\mf{h}\}.$
\item For any $\lambda\in\mf{h}^{*}$, we have the generalized eigenspace decomposition
$$M(\lambda)=\bigoplus_{\Delta\in\mathbb{C}}M(\Delta,\lambda)$$
with respect to $L^{M}_{0}=\omega_{(1)}^{M}$
such that each component is finite-dimensional.
\item
For any $\lambda\in\mf{h}^{*}$, the set $\bigl\{{\rm Re}(\Delta)\,\bigl|\,M(\Delta,\lambda)\neq\{0\}\bigr.\bigr\}$
is lowerly bounded.
\end{enumerate}
\end{dfn}

For a weight-wise admissible $(V,\mf{h})$-module $M$, we define
$$P(M):=\bigl\{(\Delta,\lambda)\in\mathbb{C}\times\mf{h}^{*}
\,\bigl|\,M(\Delta,\lambda)\neq\{0\}\bigr.\bigr\}.$$

\begin{dfn}
Suppose that $P(M)$ is discrete.
Then we define the formal character of $M$ by
\begin{equation*}
\ch(M):=\sum_{(h,\lambda)\in P(M)}
\bigl({\sf dim} M(h,\lambda)\bigr)q^{h}\mathbf{e}^{\lambda},
\end{equation*}
where $q$ and $\mathbf{e}$ are formal variables.
\end{dfn}

The following lemma describes the $\mathcal{F}$-module structure
of a weight-wise admissible $(V,\mf{h})$-module.

\begin{lem}\label{LBDD}
Let $M$ be a weight-wise admissible $(V,\mf{h})$-module.
Assume that the mapping $B\colon\mf{h}\times\mf{h}\to\mathbb{C}$
is non-degenerate.
Then the linear mapping
$$\bigoplus_{\lambda\in\mf{h}^{*}}
\Hom_{\mathcal{F}}\bigl(\mathcal{F}_{\lambda},M\bigr)\otimes\mathcal{F}_{\lambda}
\rightarrow M;\,f\otimes v\mapsto f(v)$$
gives a weak $\mathcal{F}$-module isomorphism,
where $\mathcal{F}_{\lambda}:=
\Ind_{\widehat{\mf{h}}_{B}^{\geq0}}^{\widehat{\mf{h}}_{B}}\mathbb{C}_{\lambda}$
is the Heisenberg Fock module of highest weight $\lambda\in\mf{h}^{*}$.
\end{lem}

\begin{proof}
The injectivity of the mapping follows from the decomposition (\ref{decomp})
and the simplicity of $\mathcal{F}_{\lambda}$.
By the condition (3) in Definition \ref{WWA}, 
the $\widehat{\mf{h}}_{B}$-module
$M(\lambda)$ satisfies the condition $\mf{C}_{1}$
in \cite[Section 1.7]{frenkel1989vertex}
(equivalently, $M(\lambda)$ is an admissible $\mathcal{F}$-module 
in the sense of \cite[Definition 2.3]{dong1997regularity}).
Then the surjectivity follows from \cite[Theorem 1.7.3]{frenkel1989vertex}.
\end{proof}

Denote by $\mathcal{C}_{V}=\mathcal{C}_{(V,\mf{h})}$
the full subcategory of $V$-{\sf Mod} whose objects are
 weight-wise admissible $(V,\mf{h})$-modules.
Since weight-wise admissible $(V,\mf{h})$-modules
are closed under finite sums, kernels and cokernels,
the category $\mathcal{C}_{V}$ is abelian.

\begin{dfn}\label{block}
Assume that $(V,Y)$ is a weight-wise admissible
$(V,\mf{h})$-module.
Let $Q_{V}$ be the additive subgroup of $\mathbb{C}\times\mf{h}^{*}$
generated by $P(V)$.
For $\Delta\in\mathbb{C}$ and $\lambda\in\mf{h}^{*}$,
we denote 
the full subcategory of 
$\mathcal{C}_{V}$ whose objects satisfy
$P(M)\subset\overline{(\Delta,\lambda)}:=\{(\Delta,\lambda)\}+Q_{V}$
by 
$\mathcal{C}_{V}^{\overline{(\Delta,\lambda)}}$.
We call this full subcategory the \emph{$\overline{(\Delta,\lambda)}$-block}
of $\mathcal{C}_{V}$.
\end{dfn}

\begin{rem}
We note that there are no non-trivial homomorphisms and extensions between modules 
from distinct blocks of $\mathcal{C}_{V}$.
\end{rem}

\subsection{Our setting}\label{Setting}

In what follows, $V_{\mf{sl}_{2}}$ stands for
the VOA $V_{k}(\mf{sl}_{2})$ or $L_{k}(\mf{sl}_{2})$,
and $V_{\mf{ns}_{2}}$ stands for
the VOSA $V_{c_{k}}(\mf{ns}_{2})$ or $L_{c_{k}}(\mf{ns}_{2})$.
Throughout this paper,
we fix the following subspaces:
\begin{equation*}
\mf{h}\bigl(V_{\mf{sl}_{2}}\bigr)
:=\mathbb{C}\mathbf{H},\ 
\mf{h}\bigl(V_{\mf{ns}_{2}}\bigr)
:=\mathbb{C}\mathbf{J},\ 
\mf{h}(\mathcal{F}^{\pm})
:=\mathbb{C}{\boldsymbol \alpha}^{\pm},\ 
\mf{h}(V^{\pm})
:=\mathbb{C}\alpha_{-1}^{\pm}.
\end{equation*}
Let $\mathbf{H}^{*}, \mathbf{J}^{*}, ({\boldsymbol \alpha}^{\pm})^{*}$
and $(\alpha_{-1}^{\pm})^{*}$ denote the dual basis
for $\mathbf{H}, \mathbf{J}, {\boldsymbol \alpha}^{\pm}$
and $\alpha_{-1}^{\pm}$, respectively.
For the tensor vertex superalgebra $V\otimes W$,
we put 
$$\mf{h}(V\otimes W):=\mf{h}(V)\otimes{\bf 1}_{W}+{\bf 1}_{V}\otimes\mf{h}(W).$$

Let $V$ be one of the above conformal vertex superalgebras.
Then it is easy to verify that
the adjoint module $(V,Y)$ is a weight-wise admissible $\bigl(V,\mf{h}(V)\bigr)$-module.
By fixing appropriate bases, we obtain the following identifications:
\begin{equation*}
Q_{V_{\mf{sl}_{2}}}\simeq\mathbb{Z}^{2},\ 
Q_{V_{\mf{ns}_{2}}}\simeq
\left\{\left.\left(a+\frac{b}{2},b\right)\,\right|a,b\in\mathbb{Z}\right\},
\end{equation*}
\begin{equation*}
Q_{\mathcal{F}^{\pm}}\simeq\mathbb{Z}\times\{0\},\ 
Q_{V^{\pm}}\simeq
\left\{\left.\left(a+\frac{b}{2},b\right)\,\right|a,b\in\mathbb{Z}\right\}.
\end{equation*}

\subsection{Examples}

\subsubsection{Highest weight modules over $\widehat{\mf{sl}}_{2}$ and $\mf{ns}_{2}$}

The Verma module $M_{j,k}$ of $\widehat{\mf{sl}}_{2}$
and its quotient modules lie in the block 
$\mathcal{C}_{V_{k}(\mf{sl}_{2})}^{\overline{(\Delta_{j},j)}}$,
where $\Delta_{j}:=\frac{j(j+1)}{k+2}$.
On the other hand, the Verma module $\mathcal{M}_{h,j,c}$ of $\mf{ns}_{2}$
and its quotients lie in the block $\mathcal{C}_{V_{c}(\mf{ns}_{2})}^{\overline{(h,j)}}$.

\subsubsection{Relaxed highest weight modules over $\widehat{\mf{sl}}_{2}$}

Since the block $\mathcal{C}_{V_{k}(\mf{sl}_{2})}^{\overline{(h,j)}}$
contains no highest weight modules for generic $h,j\in\mathbb{C}$,
we introduce a generalization of usual highest weight modules
as the appropriate counterpart of 
the highest weight $\mf{ns}_{2}$-modules.

Let $U_{0}:=\{u\in U(\mf{sl}_{2})\mid[H,u]=0\}$ be 
the subalgebra of $U(\mf{sl}_{2})$.
It is clear that $H$ and the Casimir element 
$\Omega\in U(\mf{sl}_{2})$
freely generate $U_{0}$ as a unital commutative 
$\mathbb{C}$-algebra.
For $h,j\in\mathbb{C}$, let $\mathbb{C}_{h,j}$ be
the 1-dimensional $U_{0}$-module defined 
by $\Omega.1:=2(k+2)h$ and $H.1:=2j$.
We define the action of 
$\mf{sl}_{2}\otimes \mathbb{C}[t]
\oplus\mathbb{C}K$ on the induced $\mf{sl}_{2}$-module
$\Ind_{U_{0}}^{U(\mf{sl}_{2})}\mathbb{C}_{h,j,k}$ by
$X_{n}\mapsto X\delta_{n,0},\,K\mapsto k\id$ for $X\in\mf{sl}_{2}$ and
$n\geq0$.
Denote this $\mf{sl}_{2}\otimes \mathbb{C}[t]
\oplus\mathbb{C}K$-module by $\bar{R}_{h,j,k}$.
Then, we define the relaxed Verma module (cf. \cite{FST98}) 
as the induced module
$R_{h,j,k}:=\Ind_{\mf{sl}_{2}\otimes \mathbb{C}[t]
\oplus\mathbb{C}K}^{\widehat{\mf{sl}}_{2}}\bar{R}_{h,j,k}.$
We write $\ket{h,j,k}$ for the canonical generator 
of $R_{h,j,k}$.

By an easy computaion, we have 
$L^{\mf{sl}_{2}}_{0}\ket{h,j,k}=h\ket{h,j,k}$.
Hence $R_{h,j,k}$ lies in $\mathcal{C}_{V_{k}(\mf{sl}_{2})}^{\overline{(h,j)}}$.
Since the sum of all proper submodules of $R_{h,j,k}$
 which intersect trivially with 
$\mathbb{C}\ket{h,j,k}$ gives the maximum
proper submodule, 
there exists a unique irreducible quotient 
$L_{h,j,k}$ of $R_{h,j,k}$.

\begin{rem}\label{divergent}
Since $R_{h,j,k}$ is a free $U(\mf{sl}_{2}\otimes t^{-1}\mathbb{C}[t^{-1}])$-module
with base 
$$\{E_{0}^{n}\ket{h,j,k},\ket{h,j,k},F_{0}^{n}\ket{h,j,k}|\,n>0\,\},$$ 
we have
$$\ch(R_{h,j,k})(q,x)
=q^{h}x^{j}\sum_{n\in\mathbb{Z}}\frac{x^{n}}
{(q;q)^{3}_{\infty}},$$
where $x:=\mathbf{e}^{\frac{1}{2}\mathbf{H}^{*}}$ (see \S\ref{Setting}) and $(a;q)_{\infty}:=\prod_{i\geq0}(1-aq^{i})$.
Note that the formal series 
$\ch(\mathcal{L}_{h,j,c})(q,x)$ is absolutely convergent 
as a function in two variables $q$ and $x$ in the region
$\mathbb{A}:=
\left\{(q,x)\in\mathbb{C}^{2}
\left|\,0<|q|<1,\ 
|q|^{\frac{1}{2}}<|x|<|q|^{-\frac{1}{2}}
\right.\right\},$
while
$\ch(L_{h,j,k})(q,x)$ 
has no convergent region in general.
\end{rem}

In \cite{futorny1996irreducible},
V. Futorny introduces a generalization of Verma modules over 
the affine Kac-Moody algebra 
$\widehat{\mf{sl}}_{2}\rtimes\mathbb{C}D$.
It is clear that the modules are isomorphic to 
relaxed Verma modules with spectral flow twists
(see Appendix \ref{SFA}) as $\widehat{\mf{sl}}_{2}$-modules.
Consequently, the next fact follows from 
\cite[Theorem 6.3]{futorny1996irreducible}, the classification
of irreducible weight modules over $\widehat{\mf{sl}}_{2}\rtimes\mathbb{C}D$.

\begin{fct}\label{simple}
Assume $k\neq0$ and let $L$ be a simple object in $\mathcal{C}_{V_{k}(\mf{sl}_{2})}$.
Then, there exist $h,j\in\mathbb{C}$ and $\theta\in\mathbb{Z}$ such that
$L\simeq L_{h,j,k}^{\theta}.$
\end{fct}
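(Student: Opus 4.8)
The plan is to reduce Fact \ref{simple} to Futorny's classification \cite[Theorem 6.3]{futorny1996irreducible} of simple weight modules over $\widehat{\mf{sl}}_{2}\rtimes\mathbb{C}D$, translating between our notion of weight-wise admissible simple object and the modules that appear there. First I would observe that a simple object $L$ in $\mathcal{LW}_{V_{k}(\mf{sl}_{2})}$ is, by definition, a simple weak $V_{k}(\mf{sl}_{2})$-module, hence a simple smooth $\widehat{\mf{sl}}_{2}$-module of level $k$ on which $\frac{1}{2}\mathbf{H}=\frac{1}{2}H_{0}$ acts semisimply with a direct sum of generalized $L^{\mf{sl}_{2}}_{0}$-eigenspaces, each finite-dimensional, and with $\mathrm{Re}(L^{\mf{sl}_{2}}_{0})$ bounded below on each $\mf{h}$-weight space. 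Using the Sugawara construction, $L^{\mf{sl}_{2}}_{0}$ is (up to the scalar $\Delta_{j}$-type shift) the operator $-D$ on such a module; so the grading by $L^{\mf{sl}_{2}}_{0}$-generalized eigenspaces together with the $H_{0}$-grading makes $L$ into a weight module for $\widehat{\mf{sl}}_{2}\rtimes\mathbb{C}D$ in Futorny's sense, once I check that $L^{\mf{sl}_{2}}_{0}$ actually acts semisimply. This semisimplicity is a standard consequence of simplicity plus the lower-bound condition (3): the top $L^{\mf{sl}_{2}}_{0}$-generalized eigenspace in a fixed $\mf{h}$-coset is $(\mf{ns}$-type nilpotent-free and) annihilated by all positive modes, hence generates $L$, forcing $L^{\mf{sl}_{2}}_{0}$ to be scalar on it; propagating via the PBW grading gives semisimplicity everywhere.

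Next I would invoke \cite[Theorem 6.3]{futorny1996irreducible}: every simple weight module over $\widehat{\mf{sl}}_{2}\rtimes\mathbb{C}D$ of nonzero level with finite-dimensional weight spaces and bounded-below real part of the $D$-eigenvalue on $H_{0}$-weight spaces is a simple quotient of an induced (``generalized Verma'') module, which, after forgetting the $D$-action, is precisely a spectral flow twist $R_{h,j,k}^{\theta}$ of a relaxed Verma module for suitable $h,j\in\mathbb{C}$ and $\theta\in\mathbb{Z}$. Here $\theta$ records which ``Borel'' (i.e. which choice of positive part among the $\mathbb{Z}$-family of triangular decompositions of $\widehat{\mf{sl}}_{2}$ compatible with $D$) is used; the remark in the excerpt that Futorny's modules are relaxed Verma modules with spectral flow twists is exactly this identification. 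Passing to the simple quotient and using that the spectral flow automorphism $\rho_{\theta}$ (Appendix \ref{SFA}) intertwines the $V_{k}(\mf{sl}_{2})$-module structures, I get $L\cong (L_{h,j,k})^{\theta}=L_{h,j,k}^{\theta}$ as $\widehat{\mf{sl}}_{2}$-modules, which is the desired conclusion.

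The main obstacle, I expect, is the two-way bookkeeping between the categorical/vertex-algebraic hypotheses defining $\mathcal{LW}_{V_{k}(\mf{sl}_{2})}$ and the purely Lie-theoretic hypotheses in Futorny's theorem: one must verify that ``simple weak $V_{k}(\mf{sl}_{2})$-module with the weight-wise admissibility conditions on $(\tfrac12\mathbf{H},\omega^{\mf{sl}_{2}})$'' is equivalent to ``simple weight $\widehat{\mf{sl}}_{2}\rtimes\mathbb{C}D$-module with finite-dimensional weight spaces and the appropriate boundedness,'' and in particular that the $L^{\mf{sl}_{2}}_{0}$-action (a priori only locally finite by condition (2)) is genuinely semisimple so that $L$ is a weight module in the strict sense rather than a generalized-weight module. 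The rest — identifying the resulting simple module with a spectral flow twist of an $L_{h,j,k}$ and reading off the parameters — is then a direct translation and should be routine given the description of $R_{h,j,k}$ and the spectral flow automorphisms already set up in the paper.
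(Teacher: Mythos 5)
Your overall strategy---reduce to Futorny's classification \cite[Theorem 6.3]{futorny1996irreducible} of simple weight modules over $\widehat{\mf{sl}}_{2}\rtimes\mathbb{C}D$, with $D$ realized through the Sugawara $L^{\mf{sl}_2}_0$, and identify his generalized Verma modules with the spectral-flow twists $R_{h,j,k}^{\theta}$---is exactly the paper's approach; the paper in fact gives no further argument beyond this citation, so the verification of Futorny's hypotheses that you sketch is a genuine supplement rather than a deviation.

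However, your sketched argument for $L^{\mf{sl}_2}_0$-semisimplicity does not quite go through as written. You take ``the top $L^{\mf{sl}_2}_0$-generalized eigenspace in a fixed $\mf{h}$-coset,'' observe it is killed by positive modes, and conclude it generates $L$, ``forcing $L^{\mf{sl}_2}_0$ to be scalar on it.'' There are two problems. First, condition (3) of Definition \ref{WWA} only bounds $\mathrm{Re}(h)$ below for each fixed $\mf{h}$-weight $\lambda$; it gives no global lower bound over an entire $\mf{h}$-coset, so such a ``top'' space need not exist (applying $E_{1}$ repeatedly lowers the $L_0$-value while changing $\lambda$, and nothing in the axioms forces this to terminate). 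Second, even when a bottom graded piece does exist, the fact that it generates a simple module does not by itself force the nilpotent operator $L^{\mf{sl}_2}_0 - h_0$ to vanish on it. The clean argument avoids both issues: since $[L^{\mf{sl}_2}_0,X_n]=-nX_n$ for $X\in\mf{sl}_2$, the span $N_0$ of all genuine $L^{\mf{sl}_2}_0$-eigenvectors is a $\widehat{\mf{sl}}_2$-submodule of $L$; each nonzero generalized eigenspace $L(h,\lambda)$ is finite-dimensional and hence contains a genuine eigenvector, so $N_0\neq\{0\}$; simplicity then forces $N_0=L$. With this repair, the rest of your translation to Futorny's setting and the identification $L\cong L_{h,j,k}^{\theta}$ is sound and matches the paper.
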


\section{Equivalence between module categories}

In this section we construct functors which
establish categorical equivalences, and state the main result.
Throughout this section, we fix $h,j\in\mathbb{C}$.

\subsection{Functors $\Omega^{\pm}_{j}$}

In this subsection we introduce functors between
$V_{k}(\mf{sl}_{2})$-{\sf Mod} and $V_{c_{k}}(\mf{ns}_{2})$-{\sf Mod}.
The restrictions of these functors
give categorical equivalences
between blocks of the category of weight-wise admissible modules.

\begin{dfn}
We define the functors
$$
\widetilde{\Omega}^{+}_{j}:=\Hom_{\mathcal{F}^{+}}\left(\mathcal{F}^{+}_{\kappa j},
\iota_{+}^{*}(-\otimes V^{+})\right)\colon
V_{k}(\mf{sl}_{2})\text{\sf-Mod}\rightarrow
V_{c_{k}}(\mf{ns}_{2})\text{\sf-Mod},$$
$$\widetilde{\Omega}^{-}_{j}:=
\Hom_{\mathcal{F}^{-}}\left(\mathcal{F}^{-}_{\kappa j},
\iota_{-}^{*}(-\otimes V^{-})\right)
\colon
V_{c_{k}}(\mf{ns}_{2})\text{\sf-Mod}
\rightarrow
V_{k}(\mf{sl}_{2})\text{\sf-Mod}.$$
\end{dfn}

From now on, we always suppose
\begin{equation}\label{assump}
M\in\Obj\left(\mathcal{C}_{V_{k}(\mf{sl}_{2})}^{\overline{(h,j)}}\right)
\text{ and }\ 
N\in\Obj\left(\mathcal{C}_{V_{c_{k}}(\mf{ns}_{2})}
^{\overline{(h-\frac{j^{2}}{k+2},\frac{2j}{k+2})}}\right).
\end{equation}

\begin{lem} We have
\begin{equation*}
\widetilde{\Omega}^{+}_{j}(M)
\in\Obj\left(\mathcal{C}_{V_{c_{k}}(\mf{ns}_{2})}
^{\overline{(h-\frac{j^{2}}{k+2},\frac{2j}{k+2})}}\right)
\text{ and }\ 
\widetilde{\Omega}^{-}_{j}(N)
\in\Obj\left(
\mathcal{C}_{V_{k}(\mf{sl}_{2})}^{\overline{(h,j)}}\right).
\end{equation*}
\end{lem}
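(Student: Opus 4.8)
The plan is to follow the two operations that define $\Omega^{+}_{j}$ — tensoring with $V^{+}$ and pulling back along $\iota_{+}$ (Fact \ref{coset1}), then extracting the multiplicity space of $\mathcal{F}^{+}_{\kappa j}$ — while tracking the $\mf{h}$-weights and the $L_{0}$-eigenvalue at each stage; I treat $\Omega^{+}_{j}$ in detail, the case of $\Omega^{-}_{j}$ being parallel. Since $M$ and the adjoint module $V^{+}$ are both weight-wise admissible, so is $M\otimes V^{+}$ over $V_{k}(\mf{sl}_{2})\otimes V^{+}$. After pulling back along $\iota_{+}$, the Heisenberg $\mathcal{F}^{+}$ acts with zero-mode $({\boldsymbol \alpha}^{+})_{(0)}=\kappa(\tfrac12 H_{0}-\alpha^{+}_{0})$, which is semisimple, and $({\boldsymbol \alpha}^{+})_{(n)}$ annihilates any given vector for $n\gg0$ because fields on a weak module are lower truncated; hence $\iota_{+}^{*}(M\otimes V^{+})$ satisfies the condition $\mf{C}_{1}$ over the Heisenberg Lie algebra of $\mathcal{F}^{+}$, so by \cite[Theorem 1.7.3]{frenkel1989vertex} (as in the proof of Lemma \ref{LBDD}) it decomposes as a direct sum of Fock modules over $\mathcal{F}^{+}$. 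Consequently $\Omega^{+}_{j}(M)$, together with its $V_{c_{k}}(\mf{ns}_{2})$-action — which commutes with $\mathcal{F}^{+}$ inside $V_{c_{k}}(\mf{ns}_{2})\otimes\mathcal{F}^{+}$ — is identified with the space of $\mathcal{F}^{+}$-highest weight vectors of charge $\kappa j$ in $M\otimes V^{+}$, namely $\{v\in M\otimes V^{+}\mid\alpha^{+}_{m}v=0\ \text{for}\ m>0,\ (\tfrac12 H_{0}-\alpha^{+}_{0})v=j\,v\}$.

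Next I compute the weights. Take a weight vector $v=m\otimes w$ with $m\in M(h',\mu)$, $w\in V^{+}$ satisfying $\tfrac12 H_{0}v=\mu v$, $\alpha^{+}_{0}v=nv$, $L^{\mf{sl}_{2}}_{0}v=h'v$ and $L^{Q^{+}}_{0}v=(\tfrac{n^{2}}{2}+d)v$ with $d\in\mathbb{Z}_{\geq0}$; the charge condition gives $n=\mu-j$. From Fact \ref{coset1}(1), $\mathbf{J}_{(0)}=\tfrac{\kappa^{2}}{2}H_{0}+\tfrac{c_{k}}{3}\alpha^{+}_{0}$, and since $k=-2+\tfrac{2}{\kappa^{2}}$ one has $\tfrac{c_{k}}{3}=\tfrac{k}{k+2}=1-\kappa^{2}$, $\tfrac{2j}{k+2}=\kappa^{2}j$ and $\tfrac{j^{2}}{k+2}=\tfrac{(\kappa j)^{2}}{2}$; hence $J_{0}v=(\mu-j+\kappa^{2}j)v$, so $J_{0}v-\tfrac{2j}{k+2}=nv$ with $n\in\mathbb{Z}$ (indeed $\mu\in j+\mathbb{Z}$ as $M$ lies in $\overline{(h,j)}$). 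Since $\iota_{+}$ preserves the conformal vector, $L_{0}^{\mf{ns}_{2}}=L^{\mf{sl}_{2}}_{0}+L^{Q^{+}}_{0}-L^{+}_{0}$ on $\iota_{+}^{*}(M\otimes V^{+})$, and $L^{+}_{0}$ acts by $\tfrac{(\kappa j)^{2}}{2}$ on an $\mathcal{F}^{+}$-highest weight vector of charge $\kappa j$, so $L_{0}^{\mf{ns}_{2}}v=(h'+\tfrac{n^{2}}{2}+d-\tfrac{j^{2}}{k+2})v$; therefore $L_{0}^{\mf{ns}_{2}}v-(h-\tfrac{j^{2}}{k+2})\equiv(h'-h)+\tfrac{n^{2}}{2}+d\equiv\tfrac{n}{2}\pmod{\mathbb{Z}}$, using $n^{2}\equiv n\pmod{2}$ and $h'-h\in\mathbb{Z}$. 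Thus $\bigl(L_{0}^{\mf{ns}_{2}}v-(h-\tfrac{j^{2}}{k+2}),\,J_{0}v-\tfrac{2j}{k+2}\bigr)\in Q_{S(V_{\mf{ns}_{2}})}$, i.e. $P(\Omega^{+}_{j}(M))\subset\overline{(h-\tfrac{j^{2}}{k+2},\,\tfrac{2j}{k+2})}$.

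It remains to verify the three conditions of Definition \ref{WWA} for $\Omega^{+}_{j}(M)$. Condition (1) is the semisimplicity of $J_{0}$, established above. For (2) and (3), fixing the $J_{0}$-weight fixes $\mu$, hence $n$; then $\mathrm{Re}\,L_{0}^{\mf{ns}_{2}}=\mathrm{Re}\,h'+d+(\text{constant})$ with $d\geq0$ and $\{\mathrm{Re}\,h'\mid M(h',\mu)\neq\{0\}\}$ bounded below, which gives (3); and the generalized $L_{0}^{\mf{ns}_{2}}$-eigenspace of any fixed value is a finite sum — over the ($\geq0$, bounded above) values of $d$ and the correspondingly finitely many admissible $h'$ — of finite-dimensional spaces, which gives (2). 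Hence $\Omega^{+}_{j}(M)\in\mathcal{LW}^{\overline{(h-j^{2}/(k+2),\,2j/(k+2))}}_{V_{c_{k}}(\mf{ns}_{2})}$.

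The same three steps handle $\Omega^{-}_{j}(N)$ using $\iota_{-}$: now $({\boldsymbol \alpha}^{-})_{(0)}=\kappa^{-1}(J_{0}-\alpha^{-}_{0})$ and $\mathbf{H}_{(0)}=(k+2)J_{0}-k\alpha^{-}_{0}$, the charge condition pins down the lattice charge $m$ of $V^{-}$ (with $\alpha^{-}_{0}=-m$ and $m=\kappa^{2}j-j_{N}\in\mathbb{Z}$), and one computes $\tfrac12 H_{0}\in j+\mathbb{Z}$ and $L^{\mf{sl}_{2}}_{0}-h\equiv-\tfrac{m+m^{2}}{2}\equiv0\pmod{\mathbb{Z}}$, so $P(\Omega^{-}_{j}(N))\subset\overline{(h,j)}$. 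The genuinely new point — which I expect to be the main obstacle — is condition (3): because $V^{-}$ is negative-definite, $L^{Q^{-}}_{0}$ is unbounded below on $V^{-}$ (it equals $-\tfrac{m^{2}}{2}+d$ on the charge-$m$ part), so $N\otimes V^{-}$ has conformal weights unbounded below, and one must check that the lower bound survives in $\Omega^{-}_{j}(N)$. It does: fixing the $\tfrac12 H_{0}$-weight forces a single value of $m$, so the potentially negative contribution $-\tfrac{m^{2}}{2}$ becomes a fixed constant and lower-boundedness descends from that of $N$. (The Fock decomposition is still available, since $\mf{C}_{1}$ over the Heisenberg of $\mathcal{F}^{-}$ again follows from lower truncation of weak-module fields.) Everything else is bookkeeping with the formulas of Fact \ref{coset1}.
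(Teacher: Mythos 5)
Your proof is correct and is essentially the detailed version of the argument the paper leaves as ``easy computations'': you track the $\mathfrak{h}$-weight and $L_0$-eigenvalue through the coset embedding, use the Fock decomposition (exactly as in the paper's Lemma~3.5, via \cite[Theorem 1.7.3]{frenkel1989vertex}), and check the block membership and conditions (1)--(3) by direct arithmetic with the relations $\kappa^2=\tfrac{2}{k+2}$, $\tfrac{c_k}{3}=1-\kappa^2$. The one point you flag as ``the main obstacle'' — that $L_0^{Q^-}$ is unbounded below on $V^-$, but the lower bound in condition (3) survives because fixing the output $\tfrac12 H_0$-weight pins down the lattice charge $m$ — is indeed the genuine subtlety the paper's terse proof suppresses, and you resolve it correctly.
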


\begin{proof}
Since $\iota_{\pm}$ preserve the subspaces $\mf{h}$
of the corresponding vertex superalgebras,
the conditions in Definition \ref{WWA}
for $\Omega^{+}_{j}(M)$ and $\Omega^{-}_{j}(N)$
 inherit from those for $M$, $N$ and $V^{\pm}$.
By easy computations,
it follows that these modules lie in the above blocks.
\end{proof}

In what follows, we write $\Omega^{+}_{j}$ and $\Omega^{-}_{j}$
for the restrictions of the functors 
$\widetilde{\Omega}^{+}_{j}$ and $\widetilde{\Omega}^{-}_{j}$
to the full subcategories $\mathcal{C}_{V_{k}(\mf{sl}_{2})}^{\overline{(h,j)}}$ and
$\mathcal{C}_{V_{c_{k}}(\mf{ns}_{2})}
^{\overline{(h-\frac{j^{2}}{k+2},\frac{2j}{k+2})}}$, respectively.
To simplify notation, we set
$$\mathcal{F}_{(n,m)}^{+}
:=\mathcal{F}
_{\kappa(j+\frac{k}{2}n-m)}^{+},\ \ 
\mathcal{F}_{(n,m)}^{-}
:=\mathcal{F}_{\kappa
(j+\frac{k}{2}n+\frac{k+2}{2}m)}^{-}$$
for $n,m\in\mathbb{Z}$.
The functors $\Omega^{\pm}_{j}$
are motivated by the following isomorphisms.

\begin{lem}\label{vacc}
(1) The linear mapping 
\begin{equation}\label{vac1}
\bigoplus_{n\in\mathbb{Z}}
\Omega^{+}_{j-n}(M)\otimes \mathcal{F}^{+}_{(0,n)}
\overset{\simeq}{\longrightarrow}
\iota_{+}^{*}(M\otimes V^{+});\,f\otimes v\mapsto f(v)
\end{equation}
is a $V_{c_{k}}(\mf{ns}_{2})\otimes \mathcal{F}^{+}$-module isomorphism.

(2) The linear mapping
\begin{equation}\label{vac2}
\bigoplus_{m\in\mathbb{Z}}
\Omega^{-}_{j+\frac{k+2}{2}m}(N)\otimes\mathcal{F}^{-}_{(0,m)}
\overset{\simeq}{\longrightarrow}
\iota_{-}^{*}(N\otimes V^{-})
;\,g\otimes w\mapsto g(w)
\end{equation}
is a  $V_{k}(\mf{sl}_{2})\otimes\mathcal{F}^{-}$-module isomorphism.
\end{lem}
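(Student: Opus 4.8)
The plan is to prove (1) in detail and to deduce (2) by the evident symmetry that interchanges $V_{k}(\mf{sl}_{2})\leftrightarrow V_{c_{k}}(\mf{ns}_{2})$, $V^{+}\leftrightarrow V^{-}$, $\mathcal{F}^{+}\leftrightarrow\mathcal{F}^{-}$ and $\iota_{+}\leftrightarrow\iota_{-}$. Fix $M$ as in \eqref{assump}. The starting point is that $\iota_{+}$ sends ${\bf 1}^{c_{k}}\otimes{\boldsymbol\alpha}^{+}$ to $\kappa\bigl(\frac12\mathbf{H}\otimes{\bf 1}^{+}-{\bf 1}^{k}\otimes\alpha^{+}_{-1}\bigr)$, so that on $M\otimes V^{+}$ the modes split as $\iota_{+}({\boldsymbol\alpha}^{+})_{(i)}=\kappa\bigl((\tfrac12\mathbf{H})_{(i)}\otimes\id-\id\otimes(\alpha^{+}_{-1})_{(i)}\bigr)$; a short computation with $k+2=2/\kappa^{2}$ shows the image of ${\boldsymbol\alpha}^{+}$ generates the Heisenberg Lie algebra $\hat{\mf{t}}^{+}$ acting with central charge $1$. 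Thus $\iota_{+}^{*}(M\otimes V^{+})$ is a module over $\hat{\mf{t}}^{+}$ of nonzero central charge, and its $\iota_{+}({\boldsymbol\alpha}^{+})_{(0)}$-eigenspaces provide the $\mf{h}$-weight decomposition of Definition \ref{WWA}(1), since $(\tfrac12\mathbf{H})_{(0)}$ acts semisimply on $M$ (weight-wise admissibility) and $(\alpha^{+}_{-1})_{(0)}$ acts semisimply on $V^{+}$, with a simultaneous eigenspace decomposition.

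The key step is to verify the $\mathfrak{C}_{1}$-condition of \cite[Section 1.7]{frenkel1989vertex} for $\iota_{+}^{*}(M\otimes V^{+})$ as an $\hat{\mf{t}}^{+}$-module. One cannot simply quote Lemma \ref{LBDD}: although $\iota_{+}^{*}(M\otimes V^{+})$ will turn out to decompose into Heisenberg Fock modules, it is in general \emph{not} a weight-wise admissible $\bigl(\mathcal{F}^{+},S(\mathcal{F}^{+})\bigr)$-module, because the transverse Heisenberg direction (the image of $\mathbf{J}$) contributes an infinite-dimensional Fock factor, so Definition \ref{WWA}(2) fails; hence one must redo the relevant part of the proof of Lemma \ref{LBDD} directly. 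For a spanning vector $v=m\otimes(e^{n\alpha^{+}}\otimes u)$, write $A_{i}:=(\tfrac12\mathbf{H})_{(i)}$ and $B_{i}:=(\alpha^{+}_{-1})_{(i)}$ for $i>0$, so that $\iota_{+}({\boldsymbol\alpha}^{+})_{(i)}=\kappa(A_{i}\otimes\id-\id\otimes B_{i})$. The $A_{i}$ mutually commute, lower $\operatorname{Re}\bigl(L^{\mf{sl}_{2}}_{0}\bigr)$ by $i$ and preserve the $\tfrac12\mathbf{H}$-weight, so by Definition \ref{WWA}(3) they act locally nilpotently on $M$ and $U(\{A_{i}\})m$ is finite-dimensional; likewise the $B_{i}$ act locally nilpotently on the Fock module $V^{+}$ and $U(\{B_{i}\})(e^{n\alpha^{+}}\otimes u)$ is finite-dimensional. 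As the $A_{i}$ act on the left factor and the $B_{i}$ on the right, $U(\hat{\mf{t}}^{+}_{>0})v\subseteq U(\{A_{i}\})m\otimes U(\{B_{i}\})(e^{n\alpha^{+}}\otimes u)$ is finite-dimensional, which is $\mathfrak{C}_{1}$.

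By \cite[Theorem 1.7.3]{frenkel1989vertex}, $\iota_{+}^{*}(M\otimes V^{+})$ is then isomorphic as an $\mathcal{F}^{+}$-module to $\bigoplus_{\mu}\Hom_{\mathcal{F}^{+}}(\mathcal{F}^{+}_{\mu},\iota_{+}^{*}(M\otimes V^{+}))\otimes\mathcal{F}^{+}_{\mu}$ via $f\otimes v\mapsto f(v)$, the sum over the charges $\mu$ of the Fock summands. Since $\iota_{+}$ carries the two tensor factors of $V_{c_{k}}(\mf{ns}_{2})\otimes\mathcal{F}^{+}$ to mutually commuting families of fields, the $V_{c_{k}}(\mf{ns}_{2})$-action on $\iota_{+}^{*}(M\otimes V^{+})$ commutes with the $\mathcal{F}^{+}$-action, so each $\Hom_{\mathcal{F}^{+}}(\mathcal{F}^{+}_{\mu},\iota_{+}^{*}(M\otimes V^{+}))$ is canonically a $V_{c_{k}}(\mf{ns}_{2})$-module and the isomorphism is one of $V_{c_{k}}(\mf{ns}_{2})\otimes\mathcal{F}^{+}$-modules. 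It remains to identify the occurring charges: by the block hypothesis $P(M)\subset\overline{(h,j)}$ and $Q_{S(V_{\mf{sl}_{2}})}=\mathbb{Z}^{2}$ the $\tfrac12\mathbf{H}$-weights of $M$ lie in $j+\mathbb{Z}$, while $(\alpha^{+}_{-1})_{(0)}$ has spectrum in $\mathbb{Z}$ on $V^{+}$, so the eigenvalues of $\iota_{+}({\boldsymbol\alpha}^{+})_{(0)}$ lie in $\kappa(j+\mathbb{Z})$; writing $\mu=\kappa(j-n)$, $n\in\mathbb{Z}$, gives $\mathcal{F}^{+}_{\mu}=\mathcal{F}^{+}_{(0,n)}$ and $\Hom_{\mathcal{F}^{+}}(\mathcal{F}^{+}_{\mu},\iota_{+}^{*}(M\otimes V^{+}))=\Omega^{+}_{j-n}(M)$ by the definition of the functor, which is exactly \eqref{vac1}. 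Assertion \eqref{vac2} follows verbatim, now using that $\iota_{-}$ sends ${\bf 1}^{k}\otimes{\boldsymbol\alpha}^{-}$ to a Heisenberg field of central charge $-1$ inside $V_{c_{k}}(\mf{ns}_{2})\otimes V^{-}$ (again via $k+2=2/\kappa^{2}$), that by Definition \ref{WWA}(3) the operators $\mathbf{J}_{(i)}$ ($i>0$) act locally nilpotently on each $\mathbf{J}$-weight space of $N$, and that $\alpha^{-}_{-1}$ acts locally nilpotently in positive degree and with integral $(\alpha^{-}_{-1})_{(0)}$-spectrum on $V^{-}$; the charges then fall into $\kappa\bigl(j+\tfrac{k+2}{2}\mathbb{Z}\bigr)$, i.e.\ precisely the charges of the $\mathcal{F}^{-}_{(0,m)}$, $m\in\mathbb{Z}$.

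I expect the main obstacle to be the $\mathfrak{C}_{1}$-verification together with the observation that $\iota_{\pm}^{*}(M\otimes V^{\pm})$ need not be weight-wise admissible over $\mathcal{F}^{\pm}$ itself, so that Lemma \ref{LBDD} must be reproved by hand via the left-right splitting of $\iota_{\pm}({\boldsymbol\alpha}^{\pm})_{(i)}$ rather than cited; the bookkeeping of Fock charges and the $V_{c_{k}}(\mf{ns}_{2})$- (resp.\ $V_{k}(\mf{sl}_{2})$-) equivariance are routine.
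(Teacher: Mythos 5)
Your proof is correct and follows the same outline the paper intends: the paper dispatches Lemma \ref{vacc} with ``the proof is similar to that of Lemma \ref{LBDD},'' and you carry out precisely that argument --- injectivity from the charge decomposition and irreducibility of the Fock modules, surjectivity from the $\mathfrak{C}_{1}$ condition together with \cite[Theorem 1.7.3]{frenkel1989vertex}. What you add, and it is a genuinely useful clarification the paper glosses over, is the observation that $\iota_{+}^{*}(M\otimes V^{+})$ is \emph{not} weight-wise admissible over $(\mathcal{F}^{+},S(\mathcal{F}^{+}))$ alone (the transverse Heisenberg direction makes the $L^{+}_{0}$-graded pieces infinite-dimensional), so that Lemma \ref{LBDD} cannot be quoted verbatim and the $\mathfrak{C}_{1}$ verification must be redone; your version via the left/right splitting $\iota_{+}(\boldsymbol\alpha^{+})_{(i)}=\kappa(A_{i}-B_{i})$ acting on separate tensor factors, plus the lower-boundedness in Definition \ref{WWA}(3), is correct. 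The level computations $\kappa^{2}\bigl(\tfrac{k}{2}+1\bigr)=1$ for $\iota_{+}(\boldsymbol\alpha^{+})$ and $\kappa^{-2}\bigl(\tfrac{k}{k+2}-1\bigr)=-1$ for $\iota_{-}(\boldsymbol\alpha^{-})$, and the identification of the occurring charges $\kappa(j-n)$ and $\kappa(j+\tfrac{k+2}{2}m)$ with $\mathcal{F}^{\pm}_{(0,n)}$ and $\mathcal{F}^{-}_{(0,m)}$, are all right. One small terminological slip: you call $\pm1$ the ``central charge'' of the image Heisenberg field; what you have computed is the level (the value of the bilinear form), whereas the Virasoro central charge of both $\mathcal{F}^{+}$ and $\mathcal{F}^{-}$ is $1$ --- this does not affect the argument, since what matters for \cite[Theorem 1.7.3]{frenkel1989vertex} is non-degeneracy.
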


Since the proof is similar to that of Lemma \ref{LBDD},
we omit it.

\subsection{Statement of main result}

Our main result in this paper is as follows:

\begin{theom}\label{main}
The two functors
$$\Omega^{+}_{j}\colon\mathcal{C}_{V_{k}(\mf{sl}_{2})}^{\overline{(h,j)}}
\to\mathcal{C}_{V_{c_{k}}(\mf{ns}_{2})}
^{\overline{(h-\frac{j^{2}}{k+2},\frac{2j}{k+2})}},\ 
\Omega^{-}_{j}\colon
\mathcal{C}_{V_{c_{k}}(\mf{ns}_{2})}^{\overline{(h-\frac{j^{2}}{k+2},\frac{2j}{k+2})}}
\to
\mathcal{C}_{V_{k}(\mf{sl}_{2})}^{\overline{(h,j)}}$$
are mutually quasi-inverse to each other.
In particular, these functors give categorical equivalences
as $\mathbb{C}$-linear abelian cateogries.
\end{theom}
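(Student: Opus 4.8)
The plan is to show that the unit and counit of the adjunction-like pair $(\Omega^{+}_{j}|,\Omega^{-}_{j}|)$ are natural isomorphisms, working entirely through the decomposition isomorphisms of Lemma \ref{vacc}. First I would compute the composite $\Omega^{-}_{j}|\circ\Omega^{+}_{j}|$ applied to $M\in\mathcal{LW}_{V_{k}(\mf{sl}_{2})}^{\overline{(h,j)}}$. By definition this is $\Hom_{\mathcal{F}^{-}}\bigl(\mathcal{F}^{-}_{\kappa j},\iota_{-}^{*}\bigl(\Omega^{+}_{j}(M)\otimes V^{-}\bigr)\bigr)$. The key observation is that $\Omega^{+}_{j}(M)$ is the $n=0$ summand of the left-hand side of \eqref{vac1}, and more generally $\Omega^{+}_{j-n}(M)=\Omega^{+}_{j}(M^{?})$ after an appropriate spectral-flow bookkeeping; but in fact the cleaner route is to feed the full isomorphism $\iota_{\mf{sl}_{2}}$ of Corollary \ref{twist} into the picture. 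Namely, $\iota_{\mf{sl}_{2}}^{*}(M\otimes V^{+}\otimes V^{-})$ decomposes simultaneously via \eqref{vac1} (applied in the $V^{+}$-slot) and via \eqref{vac2} (applied in the $V^{-}$-slot to each summand $\Omega^{+}_{j-n}(M)$). Matching the two decompositions against the trivial decomposition $M\otimes V^{+}\otimes V^{-}=\bigoplus_{n,m}M\otimes\mathcal{F}^{+}_{(0,n)}\otimes\mathcal{F}^{-}_{(0,m)}$ (as Heisenberg modules in the two lattice slots) forces
\begin{equation*}
\Omega^{-}_{j}\bigl(\Omega^{+}_{j}(M)\bigr)\otimes(\text{a single Fock factor})
\;\cong\; M\otimes(\text{a single Fock factor}),
\end{equation*}
and canceling the Fock factor (using that tensoring with an irreducible Heisenberg Fock module is faithful and reflects isomorphisms, exactly as in Lemma \ref{LBDD}) yields $\Omega^{-}_{j}|\circ\Omega^{+}_{j}|\cong\id$. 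The reverse composite $\Omega^{+}_{j}|\circ\Omega^{-}_{j}|\cong\id$ is handled identically using $\iota_{\mf{ns}_{2}}$ in place of $\iota_{\mf{sl}_{2}}$.

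The steps in order would be: (i) record that $\iota_{+}$ and $\iota_{-}$ are injective homomorphisms of conformal vertex superalgebras, so the pullback functors $\iota_{\pm}^{*}$ are exact and the $\Hom_{\mathcal{F}^{\pm}}(\mathcal{F}^{\pm}_{\kappa j},-)$ functors are exact on the relevant block (the grading conditions of Definition \ref{WWA} guarantee each $\Hom$-space is the multiplicity space of an isotypic component, so the functor is just "extract a direct summand"); (ii) establish the block-compatibility already proved in the Lemma preceding the theorem, so all four composites land where claimed; (iii) set up the two-slot decomposition of $\iota_{\mf{sl}_{2}}^{*}(M\otimes V^{+}\otimes V^{-})$ and of $\iota_{\mf{ns}_{2}}^{*}(N\otimes V^{+}\otimes V^{-})$, being careful that $(\iota_{-})_{13}$ involves the braiding $\sigma$, so a sign/parity check is needed when commuting the two Heisenberg slots past each other; (iv) pin down the charge arithmetic — the shift $\lambda\mapsto\lambda\pm\kappa(\tfrac{k}{2}n+\dots)$ in the definitions of $\mathcal{F}^{\pm}_{(n,m)}$ must be checked to match the weights produced by $\iota_{+}$ and $\iota_{-}$ on $\mathbf{H}$, $\mathbf{J}$, $\boldsymbol{\alpha}^{\pm}$ as listed in Fact \ref{coset1}, so that the $(0,0)$-summand on both sides is the one that survives after extracting $\mathcal{F}_{\kappa j}$; (v) conclude that the natural transformations are isomorphisms and that naturality is automatic since everything is built from canonical evaluation maps $f\otimes v\mapsto f(v)$.

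The main obstacle I expect is step (iv) together with the cancellation argument: one must verify that after extracting the $\mathcal{F}^{+}_{\kappa j}$-isotypic part in the $V^{+}$-slot and then the $\mathcal{F}^{-}_{\kappa j}$-isotypic part in the $V^{-}$-slot, exactly one Heisenberg Fock factor remains and it is a \emph{fixed} one (independent of $M$), so that it can be canceled to recover $M$ on the nose rather than $M$ twisted by some spectral flow. Concretely, the composition $\iota_{\mf{sl}_{2}}=(\iota_{+})_{12}\circ(\iota_{-})_{13}$ sends the diagonal Heisenberg $\mf{h}\subset V_{k}(\mf{sl}_{2})$ into a combination of the two lattice Heisenbergs, and one needs the linear-algebra fact that the charge lattice spanned by $\alpha^{+}$ and $\alpha^{-}$ inside $\mathbb{C}\times\mf{h}^{*}$ is unimodular in the appropriate sense, so that the simultaneous isotypic decomposition is "diagonal" — this is where $\kappa\in\mathbb{C}^{\times}$ and $k=-2+2/\kappa^{2}$ enter essentially. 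Once this bookkeeping is nailed down, the faithfulness of $-\otimes\mathcal{F}^{\pm}_{\mu}$ (a Heisenberg Fock module being irreducible with endomorphism ring $\mathbb{C}$) gives the cancellation for free, and the proof of Theorem \ref{main} is complete; the parallel computation for $\Omega^{+}_{j}\circ\Omega^{-}_{j}$ requires no new ideas.
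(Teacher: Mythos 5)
Your high-level plan—decompose $\iota_{\mf{sl}_{2}}^{*}(M\otimes V^{+}\otimes V^{-})$ two ways and extract the multiplicity space—matches the paper's strategy in outline, but the core step is glossed over in a way that would actually fail if carried out as written. The "trivial decomposition" $M\otimes V^{+}\otimes V^{-}=\bigoplus_{n,m}M\otimes\mathcal{F}^{+}_{(0,n)}\otimes\mathcal{F}^{-}_{(0,m)}$ is a decomposition with respect to the \emph{lattice} Heisenbergs $\alpha^{\pm}_{n}$ acting directly in the second and third slots, whereas the $\mathcal{F}^{\pm}$-isotypic decomposition relevant to $\Omega^{-}_{j}\circ\Omega^{+}_{j}$ is with respect to the $\iota_{\mf{sl}_{2}}$-twisted Heisenberg operators. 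Since $\iota_{\mf{sl}_{2}}(\boldsymbol\alpha^{\pm})$ is a nontrivial linear combination of $\mathbf{H}$, $\alpha^{+}_{-1}$, $\alpha^{-}_{-1}$, these two decompositions do \emph{not} coincide, and the isotypic multiplicity space under the twisted action is not $M\otimes\ket{\cdot}\otimes\ket{\cdot}$ in any obvious way. Worse, $\iota_{\mf{sl}_{2}}(\mathbf{E})=\mathbf{E}\otimes e^{\alpha^{+}}\otimes e^{-\alpha^{-}}$ and similarly for $\mathbf{F}$, so the $\iota_{\mf{sl}_{2}}$-twisted $\mathfrak{sl}_{2}$-currents shift between lattice sectors, and the $V_{k}(\mf{sl}_{2})$-module structure on the multiplicity space is genuinely nontrivial to identify. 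There is no "linear-algebra unimodularity" shortcut here; the matching you invoke is exactly the claim that needs to be proved, and it is not forced by Lemma~\ref{vacc} alone.

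What the paper does instead (Section~5.3) is construct an explicit highest-weight-compatible embedding $\mathscr{F}_{M}\colon M\otimes\mathcal{F}^{+}_{(0,0)}\otimes\mathcal{F}^{-}_{(0,0)}\hookrightarrow\iota_{\mf{sl}_{2}}^{*}(M\otimes V^{+}\otimes V^{-})$ by the formula $v\mapsto e^{\mathcal{H}}\bigl(\sum_{a}v_{a}\otimes e^{a\alpha^{+}}\otimes e^{-a\alpha^{-}}\bigr)$, where $e^{\mathcal{H}}$ is the well-defined exponential of a nilpotent-like operator built from $H_{n}\otimes\alpha^{\pm}_{-n}/n$. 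This twist is precisely what intertwines the $\iota_{\mf{sl}_{2}}$-twisted action with the original $V_{k}(\mf{sl}_{2})\otimes\mathcal{F}^{+}\otimes\mathcal{F}^{-}$-structure, and without it the identification of the multiplicity space as $M$ simply does not hold. A second essential ingredient you omit is the formal-character computation (Lemma~\ref{character}), which is needed to upgrade the sum of embeddings $\bigoplus\Delta^{h(\theta,n,m)}(\mathscr{F}_{M})$ to an \emph{isomorphism} (Proposition~\ref{key2}); injectivity is easy but surjectivity requires comparing graded dimensions, and your "faithfulness of $-\otimes\mathcal{F}^{\pm}_{\mu}$" remark only addresses cancellation after the isomorphism is already established. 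Finally, your step (v) claiming naturality is "automatic since everything is built from canonical evaluation maps" is not quite accurate: the unit and counit are built from $e^{\mathcal{H}}$ and $e^{\mathcal{J}}$, not bare evaluation maps, though naturality does still hold because these operators commute with $V_{k}(\mf{sl}_{2})$-module maps. So the proposal correctly identifies the double-decomposition framework and the charge bookkeeping, but misses the two ideas—the $e^{\mathcal{H}}$-twisted embedding and the character comparison—that make the argument actually go through.
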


We prove it in the next section.

\section{Proof of main result}

In this section we give the proof of Theorem \ref{main} by steps.
Throughout this section, we fix $h,j\in\mathbb{C}$
and suppose (\ref{assump}).

\subsection{Branching rules with respect to 
$\iota_{\mf{sl}_{2}}^{*}$ and $\iota_{\mf{ns}_{2}}^{*}$}

\begin{lem}\label{twvacc}
(1) As a $V_{k}(\mf{sl}_{2})\otimes \mathcal{F}^{+}\otimes\mathcal{F}^{-}$
-module, we have
\begin{equation*}
\bigoplus_{n,m\in\mathbb{Z}}
\Omega^{-}_{j+\frac{k}{2}n+\frac{k+2}{2}m}\circ
\Omega^{+}_{j-n}(M)\otimes \mathcal{F}^{+}_{(0,n)}
\otimes\mathcal{F}^{-}_{(n,m)}
\overset{\simeq}{\longrightarrow}
\iota_{\mf{sl}_{2}}^{*}(M\otimes V^{+}\otimes V^{-}).
\end{equation*}

(2) As a $V_{c_{k}}(\mf{ns}_{2})\otimes \mathcal{F}^{+}\otimes\mathcal{F}^{-}$
-module, we have
\begin{equation*}
\bigoplus_{m\in\mathbb{Z}}
\Omega^{+}_{j+\frac{k}{2}m-n}\circ\Omega^{-}_{j+\frac{k+2}{2}m}(N)
\otimes \mathcal{F}^{+}_{(m,n)}\otimes\mathcal{F}^{-}_{(0,m)}
\overset{\simeq}{\longrightarrow}
\iota_{\mf{ns}_{2}}^{*}(N\otimes V^{+}\otimes V^{-}).
\end{equation*}
\end{lem}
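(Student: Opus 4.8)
The statement is really a composition of the two branching rules in Lemma \ref{vacc}, so the plan is to apply \eqref{vac1} and \eqref{vac2} in succession and then bookkeep the Heisenberg Fock charges. For part (1): starting from $\iota_{\mf{sl}_2}^*(M\otimes V^+\otimes V^-)=(\iota_-)_{13}^*(\iota_+)_{12}^*(M\otimes V^+\otimes V^-)$, I would first apply \eqref{vac1} to the first and second tensor factors (the $V_k(\mf{sl}_2)\otimes\mathcal F^+$ part), obtaining
\begin{equation*}
\iota_{\mf{sl}_2}^*(M\otimes V^+\otimes V^-)\cong
\bigoplus_{n\in\mathbb Z}(\iota_-)_{13}^*\bigl(\Omega^+_{j-n}(M)\otimes\mathcal F^+_{(0,n)}\otimes V^-\bigr),
\end{equation*}
where I use that $\iota_+$ acts only on the first two factors so $\iota_-$ on the first and third factors commutes with it in the appropriate sense. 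Then, for each $n$, I would apply \eqref{vac2} with $N$ replaced by $\Omega^+_{j-n}(M)$ — but note the charge parameter: the object $\Omega^+_{j-n}(M)$ lies in a block shifted from the reference block $\overline{(h-\frac{j^2}{k+2},\frac{2j}{k+2})}$, and the correct "new $j$" entering \eqref{vac2} is $j+\frac k2 n$ (this is exactly why the summand in the statement is $\Omega^-_{j+\frac k2 n+\frac{k+2}{2}m}\circ\Omega^+_{j-n}(M)$). Part (2) is the mirror computation, using $\iota_{\mf{ns}_2}^*=(\iota_+)_{12}^*(\iota_-)_{13}^*$, applying \eqref{vac2} first and \eqref{vac1} second.

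The only genuine content beyond "apply the two lemmas" is the \emph{charge arithmetic}: one must check that the Heisenberg Fock charges compose correctly under the iterated $\Hom$-functors and that the resulting labels $\mathcal F^+_{(0,n)}\otimes\mathcal F^-_{(n,m)}$ (resp.\ $\mathcal F^+_{(m,n)}\otimes\mathcal F^-_{(0,m)}$) match the definitions $\mathcal F^+_{(n,m)}=\mathcal F^+_{\kappa(j+\frac k2 n-m)}$ and $\mathcal F^-_{(n,m)}=\mathcal F^-_{\kappa(j+\frac k2 n+\frac{k+2}{2}m)}$. Concretely, I would track how the $\mf h$-weight on each factor transforms under $\iota_\pm$ using the explicit formulas in Fact \ref{coset1}: the image of ${\boldsymbol\alpha}^+$ and ${\boldsymbol\alpha}^-$ under $\iota_+$, $\iota_-$ determines the shift in the Heisenberg charge of $M$ (resp.\ $N$) after taking $\Omega^\pm_j$, and similarly the images of $\mathbf J$ (resp.\ $\mathbf H$) pin down the charge on the $V^\pm$ factor. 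Running this through the definition of $\iota_{\mf{sl}_2}=(\iota_+)_{12}\circ(\iota_-)_{13}$ gives precisely the indexing in the statement.

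The main obstacle is purely organizational rather than conceptual: one must be careful that the $\Hom$-functor $\Omega^+_{j-n}$ interacts correctly with the remaining $V^-$-factor, i.e.\ that
\begin{equation*}
(\iota_-)_{13}^*\bigl((\Omega^+_{j-n}(M)\otimes\mathcal F^+_{(0,n)})\otimes V^-\bigr)
\end{equation*}
can be rewritten, via \eqref{vac2} applied to the $V_{c_k}(\mf{ns}_2)\otimes\mathcal F^-$ subsystem sitting in factors $1$ and $3$, while the spectator factor $\mathcal F^+_{(0,n)}$ (factor $2$) only contributes a tensor shift. This requires that $\Omega^+_{j-n}(M)$ indeed lies in a single block of $\mathcal{LW}_{V_{c_k}(\mf{ns}_2)}$ (so that \eqref{vac2} is applicable with a well-defined parameter), which we have from the block-preservation statement already established for the functors $\Omega^\pm_j$; and it requires the symmetric-braiding bookkeeping implicit in the definition $(\iota_-)_{13}=(\id\otimes\sigma)\circ(\iota_-\otimes\id)\circ(\id\otimes\sigma)$ to be harmless at the level of $\mf h$-weight spaces, which it is since $\sigma$ preserves weights. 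Once these points are in place, the isomorphisms are simply the composites of the two isomorphisms from Lemma \ref{vacc}, and the proof — being parallel to that of Lemma \ref{LBDD} and Lemma \ref{vacc} — can again be left to the reader after the charge computation is displayed.
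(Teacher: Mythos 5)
Your proposal is correct and follows essentially the same route as the paper: both exploit the factorization $\iota_{\mf{sl}_2}^* = (\iota_-)_{13}^*\circ(\iota_+)_{12}^*$ and reduce to the one-step branching rule of Lemma \ref{vacc}. The only cosmetic difference is that the paper unwinds the iterated $\Hom$-functor $\Omega^-\circ\Omega^+$ directly into a single $\Hom_{\mathcal F^+\otimes\mathcal F^-}(-,\iota_{\mf{sl}_2}^*(M\otimes V^+\otimes V^-))$ and then invokes the Lemma \ref{LBDD}-style surjectivity argument once, whereas you apply \eqref{vac1} and then \eqref{vac2} in sequence (each of which already encapsulates that argument), together with the same reparametrization $j\mapsto j+\frac k2 n$ that the paper implicitly uses to match the Fock-charge labels $\mathcal F^-_{(n,m)}$.
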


\begin{proof}
Since
$\iota_{\mf{sl}_{2}}^{*}(M\otimes V^{+}\otimes V^{-})
=(\iota_{-})_{13}^{*}\circ(\iota_{+})_{12}^{*}(M\otimes V^{+}\otimes V^{-}),$
there exists a natural $V_{c_{k}}(\mf{ns}_{2})$-module isomorphism
$$\Omega^{-}_{j+\frac{k}{2}n+\frac{k+2}{2}m}\circ
\Omega^{+}_{j-n}(M)\simeq
\Hom_{\mathcal{F}^{+}\otimes\mathcal{F}^{-}}\bigl(\mathcal{F}^{+}_{(0,n)}
\otimes\mathcal{F}^{-}_{(n,m)},\iota_{\mf{sl}_{2}}^{*}(M\otimes V^{+}\otimes V^{-})\bigr).$$
Through the natural isomorphism, we define 
a $V_{k}(\mf{sl}_{2})\otimes \mathcal{F}^{+}\otimes\mathcal{F}^{-}$
-module homomorphism
$$\bigoplus_{n,m\in\mathbb{Z}}
\Omega^{-}_{j+\frac{k}{2}n+\frac{k+2}{2}m}\circ
\Omega^{+}_{j-n}(M)\otimes \mathcal{F}^{+}_{(0,n)}
\otimes\mathcal{F}^{-}_{(n,m)}\to
\iota_{\mf{sl}_{2}}^{*}(M\otimes V^{+}\otimes V^{-})$$
 by $f\otimes v^{+}\otimes v^{-}\mapsto f(v^{+}\otimes v^{-})$.
The bijectivity of this mapping is proved in a similar way as Lemma \ref{LBDD}.
The proof of (2) is same as that of (1).
\end{proof}

\subsection{Calculation of formal characters}

Let $\theta\in\mathbb{Z}$.
We use the following notations:
$$\mathcal{C}_{V_{k}(\mf{sl}_{2})}
^{\overline{(h,j;\,\theta)}}:=
\mathcal{C}_{V_{k}(\mf{sl}_{2})}
^{\overline{(h+j\theta+\frac{k\theta^{2}}{4},j+\frac{k\theta}{2})}},\ \ 
\mathcal{C}_{V_{c_{k}}(\mf{ns}_{2})}
^{\overline{(h,j;\,\theta)}}:=
\mathcal{C}_{V_{c_{k}}(\mf{ns}_{2})}
^{\overline{(h-\frac{(j-\theta)^{2}}{k+2},\frac{2(j-\theta)}{k+2})}}.$$

\begin{lem}\label{twchar}
For $\theta\in\mathbb{Z}$, the twisted modules
$M^{\theta}$ and $N^{\theta}$
(see Appendix \ref{SFA} for the definition) lie in 
$\mathcal{C}_{V_{k}(\mf{sl}_{2})}
^{\overline{(h,j;\,\theta)}}$ and 
$\mathcal{C}_{V_{c_{k}}(\mf{ns}_{2})}
^{\overline{(h,j;\,\theta)}}$, respectively.
Moreover, we have
$\ch(M^{\theta})(q,x)=q^{\frac{\ k\theta^{2}}{4}}
x^{\frac{k\theta}{2}}\ch(M)(q,xq^{\theta})$ and
$\ch(N^{\theta})(q,x)=q^{\frac{\ k\theta^{2}}{2(k+2)}}
x^{\frac{k\theta}{k+2}}\ch(N)(q,xq^{\theta}).$
\end{lem}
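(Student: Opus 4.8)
The plan is to reduce the statement to the explicit form of the spectral flow automorphisms $\sigma_{\theta}$ of $\widehat{\mf{sl}}_{2}$ and $\mf{ns}_{2}$ recorded in Appendix~\ref{SFA}, together with the elementary fact that the twist $M\mapsto M^{\theta}$ merely relabels weight spaces. First I would record the action of $\sigma_{\theta}$ on the distinguished operators: under the identification of underlying vector spaces, the mode $\tfrac{1}{2}H_{0}$ acts on $M^{\theta}$ by $\tfrac{1}{2}H_{0}+\tfrac{k\theta}{2}$ and $L^{\mf{sl}_{2}}_{0}$ acts by $L^{\mf{sl}_{2}}_{0}+\tfrac{\theta}{2}H_{0}+\tfrac{k\theta^{2}}{4}$, while on $N^{\theta}$ the mode $J_{0}$ acts by $J_{0}+\tfrac{k\theta}{k+2}$ and $L^{\mf{ns}_{2}}_{0}$ acts by $L^{\mf{ns}_{2}}_{0}+\theta J_{0}+\tfrac{k\theta^{2}}{2(k+2)}$ (recall $\tfrac{k}{k+2}=\tfrac{c_{k}}{3}$). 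These identities are immediate from the (anti-)commutation relations defining $\sigma_{\theta}$.

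Consequently there is a dimension-preserving bijection $P(M)\overset{\sim}{\to}P(M^{\theta})$ sending $(h',\lambda')$ to $\bigl(h'+\theta\lambda'+\tfrac{k\theta^{2}}{4},\,\lambda'+\tfrac{k\theta}{2}\bigr)$; here one uses that on a generalized $L_{0}$-eigenspace $M(h',\lambda')$ the mode $H_{0}$ acts by the genuine eigenvalue $2\lambda'$, so the twisted $L_{0}$ has a single generalized eigenvalue there and dimensions match. The analogous bijection holds for $N$, with $\tfrac{k}{2},\tfrac{k}{4}$ replaced by $\tfrac{k}{k+2},\tfrac{k}{2(k+2)}$. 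From these bijections the conditions in Definition~\ref{WWA} pass to $M^{\theta}$ and $N^{\theta}$: conditions~(1) and~(2) because the weight and generalized-eigenspace decompositions are just transported, and condition~(3) because, having fixed the $\mf{h}^{*}$-weight of $M^{\theta}$, the corresponding weight $\lambda'$ of $M$ is fixed too, so the real parts of the relevant $L_{0}$-eigenvalues only change by the constant $\mathrm{Re}\bigl(\theta\lambda'+\tfrac{k\theta^{2}}{4}\bigr)$; the apparent worry that the $L_{0}$-shift involves the unbounded weight $\lambda'$ evaporates precisely because condition~(3) is a \emph{per-weight} requirement.

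For the block membership on the $\mf{sl}_{2}$-side, write a point of $P(M)$ as $(h+a,\,j+b)$ with $(a,b)\in Q_{S(V_{k}(\mf{sl}_{2}))}=\mathbb{Z}^{2}$; its image is $\bigl(h+j\theta+\tfrac{k\theta^{2}}{4},\,j+\tfrac{k\theta}{2}\bigr)+(a+\theta b,\,b)$ with $(a+\theta b,b)\in\mathbb{Z}^{2}$, so $P(M^{\theta})\subset\overline{(h,j;\,\theta)}$. On the $\mf{ns}_{2}$-side one similarly writes a point of $P(N)$ as $\bigl(h-\tfrac{j^{2}}{k+2}+a+\tfrac{b}{2},\,\tfrac{2j}{k+2}+b\bigr)$, $a,b\in\mathbb{Z}$, and its image differs from the representative $\bigl(h-\tfrac{(j-\theta)^{2}}{k+2},\,\tfrac{2(j-\theta)}{k+2}\bigr)$ of $\overline{(h,j;\,\theta)}$ by $\bigl(\tfrac{\theta^{2}}{2}+a+\theta b+\tfrac{b}{2},\,\theta+b\bigr)$, which lies in $Q_{S(V_{c_{k}}(\mf{ns}_{2}))}=\{(a'+\tfrac{b'}{2},b')\mid a',b'\in\mathbb{Z}\}$ precisely because $\tfrac{\theta^{2}-\theta}{2}=\binom{\theta}{2}\in\mathbb{Z}$.

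Finally the character identities are the generating-function shadow of the same bijections: $\ch(M^{\theta})(q,x)=\sum_{(h',\lambda')\in P(M)}\bigl(\dim M(h',\lambda')\bigr)q^{h'+\theta\lambda'+k\theta^{2}/4}x^{\lambda'+k\theta/2}=q^{k\theta^{2}/4}x^{k\theta/2}\sum_{(h',\lambda')\in P(M)}\bigl(\dim M(h',\lambda')\bigr)q^{h'}(xq^{\theta})^{\lambda'}=q^{k\theta^{2}/4}x^{k\theta/2}\ch(M)(q,xq^{\theta})$, and likewise for $N^{\theta}$. I expect the only genuine care needed --- hence the main, though mild, obstacle --- to be assembling the precise spectral-flow formulas for $L_{0}$ and $H_{0}$ (resp.\ $J_{0}$) from Appendix~\ref{SFA} and verifying that twisting $L_{0}$ by $\tfrac{\theta}{2}H_{0}$ (resp.\ $\theta J_{0}$) cannot spoil lower-boundedness, which it cannot since that condition is imposed weight space by weight space.
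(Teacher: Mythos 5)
Your proof is correct and follows essentially the same strategy as the paper: invoke Lemma~\ref{sptw} (equivalently the explicit spectral flow formulas from the Appendix) to identify the twisted $L_{0}$ and weight operators, deduce the dimension-preserving bijection $(h',\lambda')\mapsto(h'+\theta\lambda'+\tfrac{k\theta^{2}}{4},\lambda'+\tfrac{k\theta}{2})$ on $P(M)$ (and its $\mf{ns}_{2}$ analogue), and read off both the block membership and the character identity from that bijection. You spell out a few details the paper leaves implicit --- the per-weight preservation of condition~(3), the generalized-eigenspace subtlety with $H_{0}$, and the explicit lattice check that $(\tfrac{\theta^{2}}{2}+a+\theta b+\tfrac{b}{2},\,\theta+b)\in Q_{S(V_{c_{k}}(\mf{ns}_{2}))}$ via $\binom{\theta}{2}\in\mathbb{Z}$ --- but the underlying argument is the same.
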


\begin{proof}
As a corollary of Lemma \ref{sptw}, 
we see that the linear operator $(\omega^{\mf{sl}_{2}})^{M^{\theta}}_{(1)}$
coincides with $L^{\mf{sl}_{2}}_{0}+\frac{\theta}{2}H_{0}
+\frac{\ \theta^{2}}{4}K$ as an element of $\End(M)$.
For $(h',j')\in\mathbb{C}^{2}$, we put 
$(\tilde{h'},\tilde{j}')
:=(h'+j'\theta+
\frac{\ k\theta^{2}}{4},j'+\frac{k\theta}{2})
\in\mathbb{C}^{2}$.
Since $(\tilde{h}',\tilde{j}')=(\tilde{h}'',\tilde{j}'')$
holds if and only if $(h',j')=(h'',j'')$,
we have ${\sf dim} M_{h',j'}={\sf dim}(M^{\theta})
_{\tilde{h}',\tilde{j}'}$.
Hence $M^{\theta}$ inherits the conditions
of weight-wise admissiblity and the former equality holds.

Since we have
${\sf dim} N_{h,\lambda}={\sf dim}(N^{\theta})
_{h+\theta\lambda+\frac{\ k\theta^{2}}{2(k+2)},
\lambda+\frac{k\theta}{k+2}}$
by Lemma \ref{sptw},
the statements for $N^{\theta}$ are proved in the same way.
\end{proof}

As a corollary, we have the following categorical isomorphisms, 
which play important roles in the next section. 
\begin{cor}
The restrictions of the functors $\Delta^{\theta}_{\mf{sl}_{2}}$
 and $\Delta^{\theta}_{\mf{ns}_{2}}$
to the full subcategories
$\mathcal{C}_{V_{k}(\mf{sl}_{2})}^{\overline{(h,j;\,0)}}$
and $\mathcal{C}_{V_{c}(\mf{ns}_{2})}
^{\overline{(h,j;\,0)}}$ give categorical isomorphisms
$\mathcal{C}_{V_{k}(\mf{sl}_{2})}^{\overline{(h,j;\,0)}}
\overset{\simeq}{\longrightarrow}
\mathcal{C}_{V_{k}(\mf{sl}_{2})}
^{\overline{(h,j;\,\theta)}}$
and
$\mathcal{C}_{V_{c}(\mf{ns}_{2})}
^{\overline{(h,j;\,0)}}
\overset{\simeq}{\longrightarrow}
\mathcal{C}_{V_{c}(\mf{ns}_{2})}
^{\overline{(h,j;\,\theta)}}$,
respectively.
\end{cor}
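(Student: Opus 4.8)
The plan is to deduce this corollary directly from Lemma \ref{twchar} together with standard formal properties of the spectral flow functors $\Delta^{\theta}_{\mf{sl}_{2}}$ and $\Delta^{\theta}_{\mf{ns}_{2}}$ recalled in Appendix \ref{SFA}. First I would observe that each $\Delta^{\theta}$ is an autoequivalence (indeed an automorphism) of the ambient module category $V_{\mf{sl}_{2}}$-Mod, respectively $V_{\mf{ns}_{2}}$-Mod, with inverse $\Delta^{-\theta}$: this is immediate from the fact that twisting by the spectral flow automorphism $\sigma^{\theta}$ of $\widehat{\mf{sl}}_{2}$ (resp.\ $\mf{ns}_{2}$) is invertible on the Lie-algebra level, and $\Delta^{\theta}\circ\Delta^{-\theta}=\Delta^{0}=\id$ on the nose. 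So it suffices to check that $\Delta^{\theta}_{\mf{sl}_{2}}$ maps the block $\mathcal{LW}_{V_{k}(\mf{sl}_{2})}^{\overline{(h,j;\,0)}}$ into $\mathcal{LW}_{V_{k}(\mf{sl}_{2})}^{\overline{(h,j;\,\theta)}}$ and that $\Delta^{-\theta}_{\mf{sl}_{2}}$ maps the latter back into the former, and symmetrically for $\mf{ns}_{2}$.

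Next I would unwind the block condition: for $M\in\mathcal{LW}_{V_{k}(\mf{sl}_{2})}^{\overline{(h,j;\,0)}}=\mathcal{LW}_{V_{k}(\mf{sl}_{2})}^{\overline{(h,j)}}$ we have $P(M)\subset\{(h,j)\}+\mathbb{Z}^{2}$, while membership in $\mathcal{LW}_{V_{k}(\mf{sl}_{2})}^{\overline{(h,j;\,\theta)}}$ means $P(M^{\theta})\subset\{(h+j\theta+\tfrac{k\theta^{2}}{4},\,j+\tfrac{k\theta}{2})\}+\mathbb{Z}^{2}$. But Lemma \ref{twchar} already gives precisely the weight-wise admissibility of $M^{\theta}$ and the identity $\dim M_{h',j'}=\dim(M^{\theta})_{h'+j'\theta+k\theta^{2}/4,\,j'+k\theta/2}$; hence $P(M^{\theta})$ is the image of $P(M)\subset\{(h,j)\}+\mathbb{Z}^{2}$ under the affine-linear bijection $(h',j')\mapsto(h'+j'\theta+\tfrac{k\theta^{2}}{4},\,j'+\tfrac{k\theta}{2})$, which sends $\{(h,j)\}+\mathbb{Z}^{2}$ onto $\{(h+j\theta+\tfrac{k\theta^{2}}{4},\,j+\tfrac{k\theta}{2})\}+\mathbb{Z}^{2}$ because the linear part $(h',j')\mapsto(h'+j'\theta,j')$ preserves $\mathbb{Z}^{2}$ and $\tfrac{k\theta^{2}}{4}$, $\tfrac{k\theta}{2}$ are absorbed into the shift of the base point. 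This is exactly the required block inclusion. The same computation with $-\theta$ in place of $\theta$ (and base point $(h,j;\theta)$) gives the reverse inclusion, so the restriction of $\Delta^{\theta}_{\mf{sl}_{2}}$ to these blocks is an isomorphism of categories with inverse the restriction of $\Delta^{-\theta}_{\mf{sl}_{2}}$; the $\mf{ns}_{2}$ case is identical, using the second half of Lemma \ref{twchar} and $Q_{S(V_{\mf{ns}_{2}})}\cong\mathbb{Z}^{2}$.

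The only genuinely delicate point is bookkeeping: one must check that the affine shift prescribed by $\mathcal{LW}_{V}^{\overline{(h,j;\,\theta)}}$ is compatible with the lattice $Q_{S}$ — i.e.\ that the various fractional shifts $\tfrac{k\theta^{2}}{4}$, $\tfrac{k\theta}{2}$ (and in the $\mf{ns}_{2}$ case $\tfrac{k\theta^{2}}{2(k+2)}$, $\tfrac{k\theta}{k+2}$, together with the $j\mapsto j-\theta$ reparametrization hidden in the definition of $\mathcal{LW}_{V_{c_{k}}(\mf{ns}_{2})}^{\overline{(h,j;\,\theta)}}$) really do land in the correct coset modulo $Q_{S}$, so that the target block is well defined and nonempty. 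This is a finite verification using the explicit description of $Q_{S(V_{\mf{sl}_{2}})}=\mathbb{Z}^{2}$ and $Q_{S(V_{\mf{ns}_{2}})}=\{(a+\tfrac{b}{2},b)\mid a,b\in\mathbb{Z}\}$, but it is where a sign or factor error would propagate, so I would carry it out carefully rather than wave at it. Everything else — functoriality, preservation of morphisms, $\mathbb{C}$-linearity — is automatic since $\Delta^{\theta}$ is an exact $\mathbb{C}$-linear functor on the whole module category.
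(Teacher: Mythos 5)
Your argument is correct and matches what the paper intends: the corollary is obtained immediately from Lemma~\ref{twchar} once one notes that $\Delta^{\theta}$ is an isomorphism of the ambient module category with strict inverse $\Delta^{-\theta}$, so the two directional block-inclusions supplied by the lemma (applied at $\theta$ and at $-\theta$) pin the restriction down as a categorical isomorphism. The paper gives no separate proof, so there is no alternative route to compare against. Your caution about the fractional shifts in the $\mf{ns}_{2}$ case is warranted and worth actually carrying out: the base point suggested by Lemma~\ref{sptw}, namely $\bigl(h-\tfrac{j^{2}}{k+2}+\tfrac{2j\theta}{k+2}+\tfrac{k\theta^{2}}{2(k+2)},\ \tfrac{2j}{k+2}+\tfrac{k\theta}{k+2}\bigr)$, differs from the paper's chosen base point $\bigl(h-\tfrac{(j-\theta)^{2}}{k+2},\ \tfrac{2(j-\theta)}{k+2}\bigr)$ by exactly $\bigl(\tfrac{\theta^{2}}{2},\theta\bigr)$, which lies in $Q_{S(V_{\mf{ns}_{2}})}=\{(a+\tfrac{b}{2},b)\mid a,b\in\mathbb{Z}\}$ since $\tfrac{\theta(\theta-1)}{2}\in\mathbb{Z}$; so the two choices define the same block and the notation $\overline{(h,j;\theta)}$ is consistent.
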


\begin{lem}\label{character}
(1) As a formal character of $V_{k}(\mf{sl}_{2})
\otimes \mathcal{F}^{+}\otimes\mathcal{F}^{-}$-module,
we have
\begin{equation*}
\ch\left(\bigoplus_{n,m\in\mathbb{Z}}M^{\theta+n+m}
\otimes\mathcal{F}_{(\theta,n)}^{+}
\otimes\mathcal{F}_{(\theta+n,m)}^{-}\right)
=\ch\Bigl(
\iota_{\mf{sl}_{2}}^{*}(M^{\theta}
\otimes V^{+}\otimes V^{-})\Bigr).
\end{equation*}
(2) As a formal character of $V_{c_{k}}(\mf{ns}_{2})
\otimes \mathcal{F}^{+}\otimes\mathcal{F}^{-}$-module,
we have
\begin{equation*}
\ch\left(\bigoplus_{n,m\in\mathbb{Z}}N^{\theta+n+m}
\otimes\mathcal{F}_{(\theta+m,n)}^{+}
\otimes\mathcal{F}_{(\theta,m)}^{-}\right)
=\ch\Bigl(
\iota_{\mf{ns}_{2}}^{*}(N^{\theta}
\otimes V^{+}\otimes V^{-})\Bigr).
\end{equation*}
\end{lem}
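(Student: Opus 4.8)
The plan is to prove (1) by a direct computation of formal characters; part (2) is entirely parallel, with $\iota_{\mf{ns}_{2}}$ and $N$ in place of $\iota_{\mf{sl}_{2}}$ and $M$, so I would carry out (1) in full and merely indicate the changes for (2) at the end.

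The structural input is that $\iota_{\mf{sl}_{2}}$ is a morphism of conformal vertex superalgebras (Corollary~\ref{twist}) which, by inspection of the formulas in Fact~\ref{coset1}, carries the distinguished Heisenberg subspace $\mf{h}$ of $V_{k}(\mf{sl}_{2})\otimes\mathcal{F}^{+}\otimes\mathcal{F}^{-}$ isomorphically onto that of $V_{k}(\mf{sl}_{2})\otimes V^{+}\otimes V^{-}$. Hence the pullback $\iota_{\mf{sl}_{2}}^{*}(M^{\theta}\otimes V^{+}\otimes V^{-})$ has the same $L_{0}$-grading as $M^{\theta}\otimes V^{+}\otimes V^{-}$, while its $\mf{h}$-grading is obtained from the natural one by the invertible (in fact determinant-one) linear substitution of the three Heisenberg weight variables dual to $\iota_{\mf{sl}_{2}}|_{\mf{h}}$, which I would read off explicitly from Fact~\ref{coset1}. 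Combining this with the formula for $\ch(M^{\theta})$ from Lemma~\ref{twchar} and the standard character $\ch(V^{\pm})=(q;q)_{\infty}^{-1}\sum_{r\in\mathbb{Z}}q^{\pm r^{2}/2}x^{r}$ (equivalently, the decomposition $V^{\pm}\cong\bigoplus_{r\in\mathbb{Z}}\mathcal{F}^{\pm}_{r}$ as $\mathcal{F}^{\pm}$-modules) produces a closed expression for the right-hand side of (1).

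For the left-hand side I would use $\ch(\mathcal{F}^{\pm}_{\mu})=(q;q)_{\infty}^{-1}q^{\pm\mu^{2}/2}x^{\mu}$ together with Lemma~\ref{twchar} for each $\ch(M^{\theta+n+m})$, so that it becomes an explicit double sum over $(n,m)\in\mathbb{Z}^{2}$. Substituting $\ch(M)(q,x)=\sum_{(h',j')\in P(M)}(\dim M(h',j'))q^{h'}x^{j'}$ on both sides then reduces the identity to checking, for each $(h',j')\in P(M)$: a linear identity among the exponents of the three weight variables (which simultaneously exhibits an affine bijection $(n,m)\leftrightarrow(a,b)$ between the two $\mathbb{Z}^{2}$-indexed summations, using $(h',j')-(h,j)\in Q_{S(V_{\mf{sl}_{2}})}=\mathbb{Z}^{2}$), and a quadratic identity among the exponents of $q$. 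Both identities are finite computations that come down to the relation $k+2=2/\kappa^{2}$. I would also note that $V^{-}$ and $\mathcal{F}^{-}$ are built on the negative-definite lattice $Q^{-}$ and contribute $q$-exponents of the sign opposite to those from $V^{+}$ and $\mathcal{F}^{+}$, so none of the series converges; the identity is to be read purely formally (term by term), which is unambiguous since every manipulation is a term-by-term rearrangement. Part (2) follows from the same scheme once $\iota_{\mf{ns}_{2}}|_{\mf{h}}$ is computed from Fact~\ref{coset1} and $\ch(N^{\theta})$ is taken from Lemma~\ref{twchar}.

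The step I expect to be the main obstacle is the bookkeeping rather than anything conceptual: the weight-variable substitution induced by $\iota_{\mf{sl}_{2}}$ mixes all three tensor factors and is not diagonal, and the $q$-grading comparison is a quadratic-form identity, so the real danger is sign and index errors. I would keep this under control by first pushing both sides down to the single module $M$ via Lemma~\ref{twchar} and then matching, for each fixed $(h',j')\in P(M)$, the coefficient of $q^{h'}$ as a Laurent series in the weight variables.
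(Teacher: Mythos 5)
Your proposal is essentially the same argument as the paper's, and I don't see a gap: both proofs expand the characters explicitly (using Lemma~\ref{twchar} for the spectral flow twists and the standard characters of $\mathcal{F}^{\pm}_{\lambda}$ and $V^{\pm}$), and both reduce the identity to a term-by-term reindexing of the $\mathbb{Z}^2$-summation that is legitimate precisely because $P(M)-\{(h,j)\}\subset Q_{S(V_{\mf{sl}_2})}=\mathbb{Z}^{2}$. The one thing worth flagging is that the paper packages the key step much more cleanly than your sketch suggests it will come out. After factoring $\ch(M)=q^{h}x^{j}f(q,x)$ with $f\in\mathbb{Z}_{\geq0}[[q^{\pm1},x^{\pm1}]]$ and making the well-chosen variable change $y:=(x_{2})^{\kappa}$, $z:=(x_{3})^{\kappa}$ (the place where $k+2=2/\kappa^{2}$ enters), both sides collapse to $\sum_{\ell}q^{\ell^2/2}C^{\ell}\sum_{m}(B/A)^{m}f(q,\bullet)$ with $\bullet=A$ on one side and $\bullet=B$ on the other, and the whole of your ``linear identity plus quadratic identity plus affine bijection'' is absorbed into the single formal-delta identity $\sum_{m}(B/A)^{m}f(q,A)=\sum_{m}(B/A)^{m}f(q,B)$, whose proof is exactly the index shift $m\mapsto m-r$ that your integrality observation licenses. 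So your plan is sound, but if you carry it out as literally as you describe (matching coefficients of $q^{h'}$ for each $(h',j')$), be careful: the $q$-exponent of a given term is not $h'$ but $h'$ shifted by the spectral-flow and Fock contributions, so the quantity you actually want to fix is the full monomial in the three weight variables (which, as you noted, pins down $(n,m)$ and $r=j'-j$ uniquely when $k\neq-2$) and then compare the resulting finite $q$-sums. Organizing the computation around the delta identity avoids this bookkeeping entirely.
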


\begin{proof}
Since $M$ and $N$ lie in blocks, there exist
$f(q,x),\,g(q,x)\in\mathbb{Z}_{\geq0}[\![q^{\pm1},x^{\pm1}]\!]$
such that
$\ch(M)(q,x)=q^{h}x^{j}f(q,x)$ and
$\ch(N)(q,x)=q^{h-\frac{j^{2}}{k+2}}x^{\frac{2j}{k+2}}g(q,x).$

(1) For simplicity,
we set the variables by 
$$x:=\mathbf{e}^{\frac{1}{2}\mathbf{H}^{*}},\ 
y:=\mathbf{e}^{\kappa({\boldsymbol \alpha}^{+})^{*}},\ 
z:=\mathbf{e}^{\kappa({\boldsymbol \alpha}^{-})^{*}}$$
for weight-wise admissible $V_{k}(\mf{sl}_{2})
\otimes \mathcal{F}^{+}\otimes\mathcal{F}^{-}$-modules (see \S\ref{Setting}).
Then, the left- and right-hand sides (LHS and RHS for short)
are calculated as
$$\text{(LHS)}
=
\displaystyle
\frac{q^{h+j\theta+\frac{k\theta^{2}}{4}}
(xyz)^{j+\frac{k\theta}{2}}}
{(q;q)_{\infty}^{2}}
\sum_{n,m\in\mathbb{Z}}
q^{\Delta_{n,m}}
(xz)^{\frac{k}{2}(n+m)}
y^{-n}z^{m}
f(q,xq^{\theta+n+m}),$$
\begin{equation*}
\begin{array}{ll}
\hspace{-4mm}\text{(RHS)}
&\hspace{-2mm}
=\ch\Bigl(M^{\theta}
\otimes V^{+}\otimes V^{-}\Bigr)
(q,xyz,y^{-1}(xz)^{\frac{k}{2}},
z^{-1}(xz)^{-\frac{k}{2}})\\
&\hspace{-2mm}
=\displaystyle
\frac{q^{h+j\theta+\frac{k\theta^{2}}{4}}
(xyz)^{j+\frac{k\theta}{2}}}
{(q;q)_{\infty}^{2}}
\sum_{n,m\in\mathbb{Z}}
q^{\Delta_{n,m}}
(xz)^{\frac{k}{2}(n+m)}
y^{-n}z^{m}
f(q,xyzq^{\theta})
\end{array}
\end{equation*}
where $\Delta_{n,m}:=\frac{(n+m)(n-m)}{2}$.
So we can reduce them to
\begin{equation*}
\begin{array}{ccl}
\text{(LHS)}^{\prime}
&:=& \displaystyle
\sum_{n,m\in\mathbb{Z}}
q^{\Delta_{n,m}}
(xz)^{\frac{k}{2}(n+m)}
y^{-n}z^{m}f(q,xq^{\theta+n+m}), \\
\text{(RHS)}^{\prime}
&:=& \displaystyle
\sum_{n,m\in\mathbb{Z}}
q^{\Delta_{n,m}}
(xz)^{\frac{k}{2}(n+m)}
y^{-n}z^{m}
f(q,xyzq^{\theta}).
\end{array}
\end{equation*}
Here we put $\ell:=n+m, A:=xq^{\theta+\ell},
B:=xyzq^{\theta}$ and 
$C:=y^{-1}(xz)^{\frac{k}{2}}$,
then 
\begin{equation*}
\begin{array}{ccl}
\text{(LHS)}^{\prime}
&=& \displaystyle
\sum_{\ell\in\mathbb{Z}}q^{\frac{\ \ell^{2}}{2}}C^{\ell}
\sum_{m\in\mathbb{Z}}\left(\frac{B}{A}\right)^{m}f(q,A),\\
\text{(RHS)}^{\prime}
&=& \displaystyle
\sum_{l\in\mathbb{Z}}q^{\frac{\ \ell^{2}}{2}}C^{\ell}
\sum_{m\in\mathbb{Z}}\left(\frac{B}{A}\right)^{m}f(q,B).
\end{array}
\end{equation*}
Since $A$ and $B$ are independent of $m\in\mathbb{Z}$,
and $f(q,X)$ is an element of $\mathbb{Z}_{\geq0}[\![q^{\pm1},X^{\pm1}]\!]$,
we have an equality
\begin{equation*}
\sum_{m\in\mathbb{Z}}\left(\frac{B}{A}\right)^{m}f(q,A)
=
\sum_{m\in\mathbb{Z}}\left(\frac{B}{A}\right)^{m}f(q,B)
\end{equation*}
in $\mathbb{Z}_{\geq0}[\![q^{\pm1},x^{\pm1}
,y^{\pm1},z^{\pm1}]\!]$.
We thus get (LHS)\,=\,(RHS).

(2) The proof is similar to (1) and we omit it.
\end{proof}

\subsection{Twisted embedding}\label{BRC}

In this subsection we give an explicit form of the `branching' rule in Lemma \ref{twvacc},
which is a generalization of a result in \cite{FST98}.

By the assumption (\ref{assump}), 
we have the eigenspace decompositions
$M=\bigoplus_{a\in\mathbb{Z}}M(j+a)$
and
$N=\bigoplus_{a\in\mathbb{Z}}N(\kappa^{2}j+a).$
From now on, we always denote the decompositions of $v\in M$ and $w\in N$
by 
$v=\sum_{a}v_{a}\in\bigoplus_{a\in\mathbb{Z}}M(j+a)$
and
$w=\sum_{a}w_{a}\in\bigoplus_{a\in\mathbb{Z}}N(\kappa^{2}j+a),$
 respectively.

We introduce the following two operators:
\begin{flalign*}
&\mathcal{H}:=\sum_{n>0}\frac{1}{2}H_{n}\otimes
\left(\frac{\alpha^{+}_{-n}}{n}\otimes\id_{V^{-}}
-\id_{V^{+}}\otimes\frac{\alpha^{-}_{-n}}{n}\right)
\in\End(M\otimes V^{+}\otimes V^{-}), \\
&\mathcal{J}:=\sum_{n>0}J_{n}\otimes
\left(\frac{\alpha^{+}_{-n}}{n}\otimes\id_{V^{-}}
-\id_{V^{+}}\otimes\frac{\alpha^{-}_{-n}}{n}\right)
\in\End(N\otimes V^{+}\otimes V^{-}).
\end{flalign*}

\begin{lem}
The infinite sums
$e^{\mathcal{H}}:=\exp(\mathcal{H})$
and
$e^{\mathcal{J}}:=\exp(\mathcal{J})$
give well-defined operators on
$M\otimes V^{+}\otimes V^{-}$ and 
$N\otimes V^{+}\otimes V^{-}$, respectively.
\end{lem}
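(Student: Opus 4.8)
The plan is to show that, when applied to any fixed homogeneous vector, the exponential series $e^{\mathcal{H}}$ and $e^{\mathcal{J}}$ reduce to finite sums, so that they are well-defined endomorphisms. First I would analyze the grading. Each summand of $\mathcal{H}$ has the form $\tfrac12 H_n\otimes(\cdots)$ with $n>0$, and on the tensor product $M\otimes V^{+}\otimes V^{-}$ the operator $H_n$ lowers the $L^{\mf{sl}_2}_0$-eigenvalue by $n$ (equivalently, shifts the conformal weight with respect to the total Virasoro action), while $\tfrac{\alpha^{+}_{-n}}{n}$ and $\tfrac{\alpha^{-}_{-n}}{n}$ raise the $L^{\pm}_0$-eigenvalue by $n$. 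Hence each term of $\mathcal{H}$ preserves the total conformal weight (with respect to $\omega^{\mf{sl}_2}\otimes 1\otimes 1 + 1\otimes\omega^{Q^+}\otimes 1 + 1\otimes 1\otimes\omega^{Q^-}$, say, or more precisely the relevant $L_0$), but strictly increases the $V^{+}\otimes V^{-}$-conformal weight and strictly decreases the $M$-weight. I would also note that each term preserves the $\mf{h}$-weight under the three $S$-actions (since $H_n$ commutes with $H_0$ and $\alpha^{\pm}_{-n}$ commutes with $\alpha^{\pm}_0$), so $e^{\mathcal{H}}$ acts within a fixed weight-wise admissible block.

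Next I would fix a homogeneous vector $\xi\in M\otimes V^{+}\otimes V^{-}$ and argue that $\mathcal{H}^N\xi=0$ for $N$ large. Since every term of $\mathcal{H}$ strictly raises the $(V^{+}\otimes V^{-})$-conformal weight by a positive integer, and $M$ is weight-wise admissible so that for each fixed $\mf{h}$-weight the $M$-conformal weights are bounded below (condition (3) of Definition \ref{WWA}), after finitely many applications of terms of $\mathcal{H}$ the $M$-weight would have to drop below that lower bound and the image must vanish. Concretely, if $\xi$ has $M$-component in weights $\geq h_0$ and $(V^{+}\otimes V^{-})$-component in weight $d_0$, then $\mathcal{H}^N\xi$ lands in $M$-weight $\geq h_0$ and $(V^{+}\otimes V^{-})$-weight $\geq d_0+N$; but since the total weight is preserved and $M$-weights are bounded below by $h_0$, the $(V^{+}\otimes V^{-})$-weight is bounded above, forcing $\mathcal{H}^N\xi=0$ once $N$ exceeds (total weight) $-h_0-d_0$. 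Therefore $e^{\mathcal{H}}\xi=\sum_{N\geq0}\tfrac1{N!}\mathcal{H}^N\xi$ is a finite sum and extends linearly to a well-defined operator on $M\otimes V^{+}\otimes V^{-}$. The identical argument with $J_n$ (which lowers the $\mf{ns}_2$-weight by $n$) in place of $H_n$ and $N$ in place of $M$ handles $e^{\mathcal{J}}$.

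The main obstacle, such as it is, is purely bookkeeping: one must be careful that the relevant ``conformal weight'' grading on $M\otimes V^{+}\otimes V^{-}$ is the one coming from the summand structure in Definition \ref{WWA} (so that the lower-bound condition (3) really applies fiberwise over $\mf{h}^*$), and one must check that $H_n$ genuinely decreases it by $n$ on all of $M$ — this is immediate because on $M$ the operator $\omega^{\mf{sl}_2}_{(1)}=L^{\mf{sl}_2}_0$ satisfies $[L^{\mf{sl}_2}_0,H_n]=-nH_n$, which holds for any weak $V_k(\mf{sl}_2)$-module. There is no genuine analytic subtlety: the point is that, vector by vector, the sum is finite, so the ``infinite sum'' is only infinite formally.
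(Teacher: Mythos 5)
Your argument is correct and matches the paper's (terse) proof: both rely on condition (3) of Definition \ref{WWA} together with the observation that each term of $\mathcal{H}$ (resp.\ $\mathcal{J}$) preserves the $\mathfrak{h}$-weight while strictly lowering the $L^{\mathfrak{sl}_2}_0$- (resp.\ $L_0$-) weight of the $M$- (resp.\ $N$-) tensor factor, so $\mathcal{H}^N$ (resp.\ $\mathcal{J}^N$) annihilates any fixed vector for $N$ large. Your detour through total-weight preservation is harmless but unnecessary; the lower bound on the $M$-weights within a fixed $\mathfrak{h}$-weight space already suffices.
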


\begin{proof}
By the condition (3) in Definition \ref{WWA},
we have $\mathcal{H}^{m}(M(j+a)\otimes V^{+}\otimes V^{-})=\{0\}$ for any $a\in\mathbb{Z}$
and $m\gg0$. Therefore $e^{\mathcal{H}}$ is well-defined.
The well-definedness of $e^{\mathcal{J}}$ is proved in the same way.
\end{proof}

\begin{lem}\label{key}
(1) There exists a unique $V_{k}(\mf{sl}_{2})
\otimes \mathcal{F}^{+}\otimes\mathcal{F}^{-}$-module embedding
$\mathscr{F}_{M}\colon
	M\otimes\mathcal{F}_{(0,0)}^{+}
	\otimes\mathcal{F}_{(0,0)}^{-}
	\rightarrow
	\iota_{\mf{sl}_{2}}^{*}(M
	\otimes V^{+}\otimes V^{-})$
such that
\begin{equation}\label{em1}
	v\otimes\ket{(0,0)}^{+}
	\otimes\ket{(0,0)}^{-}\mapsto
	e^{\mathcal{H}}
	\left(
	\sum_{a\in\mathbb{Z}}v_{a}\otimes e^{a\alpha^{+}}
	\otimes e^{-a\alpha^{-}}\right)
\end{equation}
for any $v\in M$.

(2) There exists a unique $V_{c_{k}}(\mf{ns}_{2})
\otimes \mathcal{F}^{+}\otimes\mathcal{F}^{-}$-module
embedding
$\mathscr{G}_{N}\colon
	N\otimes\mathcal{F}_{(0,0)}^{+}
	\otimes\mathcal{F}_{(0,0)}^{-} \rightarrow
	\iota_{\mf{ns}_{2}}^{*}(N
	\otimes V^{+}\otimes V^{-})$
such that
\begin{equation}\label{em2}
	w\otimes\ket{(0,0)}^{+}\otimes\ket{(0,0)}^{-}
	\mapsto
	e^{\mathcal{J}}
	\left(
	\sum_{a\in\mathbb{Z}}w_{a}\otimes e^{a\alpha^{+}}
	\otimes e^{-a\alpha^{-}}\right)
\end{equation}
for any $w\in N$.
\end{lem}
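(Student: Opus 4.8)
The claim is the existence of the twisted embeddings $\mathscr{F}_M$ and $\mathscr{G}_N$ realizing the explicit formulae \eqref{em1} and \eqref{em2}. I will treat part (1); part (2) is formally identical under the substitution $H\leftrightarrow 2J$, $E\leftrightarrow G^+$, $F\leftrightarrow G^-$ dictated by $\iota_+$, so I would remark that at the end rather than repeat the argument. The strategy is to \emph{define} the linear map $\mathscr{F}_M$ by the right-hand side of \eqref{em1}, check that it lands in the claimed module (i.e. that the infinite sum in $a$ together with $e^{\mathcal H}$ is well defined, which is immediate from the previous lemma plus the lower-boundedness condition (3) in Definition \ref{WWA}), then verify that it is a module homomorphism for $V_k(\mf{sl}_2)\otimes\mathcal F^+\otimes\mathcal F^-$ acting through $\iota_{\mf{sl}_2}^*$, and finally establish injectivity. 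The well-definedness is routine; the two substantive points are the intertwining property and injectivity.

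\textbf{Intertwining property.} Because $M\otimes\mathcal F^+_{(0,0)}\otimes\mathcal F^-_{(0,0)}$ is generated over $V_k(\mf{sl}_2)\otimes\mathcal F^+\otimes\mathcal F^-$ by the vectors $v\otimes\ket{(0,0)}^+\otimes\ket{(0,0)}^-$ (indeed by those with $v\in M$ arbitrary, but one can even reduce to $v$ homogeneous in the $\mf h$-grading), it suffices to check that $\mathscr F_M$ commutes with the action of the generating fields. The key computation is with the Heisenberg generator $\frac12\mathbf H$ (equivalently $S(V_{\mf{sl}_2})$) and with the generators ${\boldsymbol\alpha}^\pm$ of $\mathcal F^\pm$: on the source side these act by $\frac12 H_n$ and $\alpha^\pm_n$ with the Fock modules $\mathcal F^\pm_{(0,0)}$ carrying the shifted charges recorded in the definitions $\mathcal F^+_{(n,m)}$, $\mathcal F^-_{(n,m)}$; on the target side, $\iota_{\mf{sl}_2}=(\iota_+)_{12}\circ(\iota_-)_{13}$ pulls these back to combinations of $H_n$, $\alpha^+_n$, $\alpha^-_n$ by Fact \ref{coset1}, and one checks that conjugation by $e^{\mathcal H}$ produces exactly the shift needed. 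More precisely, one computes $e^{-\mathcal H}\,(\text{gen})^{\iota_{\mf{sl}_2}}\,e^{\mathcal H}$ for each generator; the point of the particular operator $\mathcal H=\sum_{n>0}\frac12 H_n\otimes(\frac{\alpha^+_{-n}}{n}-\frac{\alpha^-_{-n}}{n})$ is that $[\,\cdot\,,\mathcal H]$ converts the ``lattice'' part of the image into the free-field part and vice versa, and the $e^{a\alpha^\pm}$ factors absorb the zero-mode bookkeeping; one also has to see that the positive modes $H_n$, $\alpha^\pm_n$ ($n>0$) act compatibly, which again follows from $[H_n,\mathcal H]$ being a scalar times a shift on each graded piece. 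This is the computational heart; I expect it to be several lines of commutator algebra with the (anti)commutation relations of $\widehat{\mf{sl}}_2$ and $\hat{\mf t}^\pm$, organized so that the vertex-operator identities for $e^{\alpha^\pm}$ (normal-ordered exponentials) do the rest. Compatibility with $\mathbf G^\pm$-type generators on the target is not needed here since those do not act on the source; one only needs the image of $\mathscr F_M$ to be a submodule, which follows once the generators of the \emph{source} algebra intertwine.

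\textbf{Injectivity and uniqueness.} Uniqueness is immediate: any two homomorphisms agreeing on the generating set $\{v\otimes\ket{(0,0)}^+\otimes\ket{(0,0)}^-\}$ coincide. For injectivity, note that $e^{\mathcal H}$ is invertible (it has inverse $e^{-\mathcal H}$, again well defined by the same local-nilpotence argument), so \eqref{em1} is injective on the generating subspace $M\otimes\ket{(0,0)}^+\otimes\ket{(0,0)}^-$ because the map $v\mapsto\sum_a v_a\otimes e^{a\alpha^+}\otimes e^{-a\alpha^-}$ is injective (the $e^{a\alpha^+}\otimes e^{-a\alpha^-}$ lie in distinct $\mf h$-weight spaces of $V^+\otimes V^-$, one for each $a$). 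To upgrade this to injectivity of the module map $\mathscr F_M$ on all of $M\otimes\mathcal F^+_{(0,0)}\otimes\mathcal F^-_{(0,0)}$, I would compare formal characters: by Lemma \ref{character}(1) (with $\theta=0$) the source and target have the same graded dimensions in each $(h,\lambda)$-space — more precisely, one matches $M\otimes\mathcal F^+_{(0,0)}\otimes\mathcal F^-_{(0,0)}$, as the $n=m=0$ summand sitting inside $\bigoplus_{n,m}M^{n+m}\otimes\mathcal F^+_{(0,n)}\otimes\mathcal F^-_{(n,m)}$, with the corresponding graded piece of $\iota_{\mf{sl}_2}^*(M\otimes V^+\otimes V^-)$ — so an injective degreewise-finite-dimensional map is automatically an isomorphism onto its (bi)graded-dimension-matching image; in any case injectivity alone is what the statement asks, and that follows from injectivity on generators together with the module structure, since any kernel vector could be moved by the algebra action to the generating subspace. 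The main obstacle is unquestionably the generator-by-generator intertwining computation in the previous paragraph; everything else is bookkeeping with the gradings $Q_{S(\cdot)}$ already tabulated in Section \ref{GW}.
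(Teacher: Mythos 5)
Your proposal is correct and follows essentially the same approach as the paper: define the map by the explicit formula on the generating subspace $M\otimes\mathbb{C}\ket{(0,0)}^{+}\otimes\mathbb{C}\ket{(0,0)}^{-}$, verify by commutator computations that conjugation by $e^{\mathcal{H}}$ intertwines the $\widehat{\mf{sl}}_{2}$-generators (the paper writes these out as $\widetilde{X}_{n}\,e^{\mathcal{H}}(v_{a}\otimes e(a))=e^{\mathcal{H}}(X_{n}v_{a}\otimes e(\cdot))$ for $X=H,E,F$), check that the positive modes $\widetilde{b}^{\pm}_{n}$ ($n>0$) annihilate the image and $\widetilde{b}^{\pm}_{0}$ gives eigenvalue $\kappa j$ so the map extends uniquely to a $\mathcal{F}^{+}\otimes\mathcal{F}^{-}$-module map, and deduce injectivity of $\mathscr{F}_{M}$ from the injectivity of the restricted map (itself a consequence of $e^{\mathcal{H}}$ being invertible and the $e^{a\alpha^{+}}\otimes e^{-a\alpha^{-}}$ lying in distinct charge spaces). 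Two small notes: the character-comparison route you sketch for injectivity is unnecessary (the paper reserves Lemma \ref{character} for the surjectivity part in Proposition \ref{key2}; for this lemma the simpler "a nonzero submodule of a Fock module must meet the top space" argument you also give suffices), and the aside about $\mathbf{G}^{\pm}$-type generators on the target is misplaced here since $\iota_{\mf{sl}_{2}}$ has $V_{k}(\mf{sl}_{2})\otimes\mathcal{F}^{+}\otimes\mathcal{F}^{-}$ as both source and target algebra, so no $\mf{ns}_{2}$-generators are involved in part (1) at all.
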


\begin{proof}
(1) Let $a\in\mathbb{Z}$ and put $e(a):=
e^{a\alpha^{+}}\otimes e^{-a\alpha^{-}}$.
Denote by $\widetilde{X}_{n}$ the linear operator
$\iota_{\mf{sl}_{2}}(\mathbf{X})_{(n)}$
on $\iota_{\mf{sl}_{2}}^{*}(M\otimes V^{+}\otimes V^{-})$
, where $\mathbf{X}$
is one of the vectors 
$\mathbf{H},\mathbf{E},\mathbf{F}$ and ${\boldsymbol \alpha}^{\pm}$.
By computations, we have
\begin{flalign*}
\widetilde{H}_{n}e^{\mathcal{H}}
\bigl(v_{a}\otimes e(a)\bigr)
&=
e^{\mathcal{H}}\bigl(
H_{n}v_{a}\otimes e(a)\bigr),
\\
\widetilde{E}_{n}e^{\mathcal{H}}
\bigl(v_{a}\otimes e(a)\bigr)
&=
e^{\mathcal{H}}\bigl(
E_{n}v_{a}\otimes e(a+1)\bigr),
\\
\widetilde{F}_{n}e^{\mathcal{H}}
\bigl(v_{a}\otimes e(a)\bigr)
&=
e^{\mathcal{H}}\bigl(
F_{n}v_{a}\otimes e(a-1)
\bigr)
\end{flalign*}
for $n\in\mathbb{Z}$.
Hence (\ref{em1}) defines 
a $V_{k}(\mf{sl}_{2})$-module homomorphism
\begin{equation}\label{4.2.1}
	M
	\otimes\mathbb{C}\ket{(0,0)}^{+}
	\otimes\mathbb{C}\ket{(0,0)}^{-}
	\rightarrow
	\iota_{\mf{sl}_{2}}^{*}(M
	\otimes V^{+}\otimes V^{-}).
\end{equation}
The injectivity of this mapping follows
from the bijectivity of the operator $e^{\mathcal{H}}$
on $M\otimes V^{+}\otimes V^{-}$.

Since we also compute that
\begin{flalign*}
\widetilde{b}^{+}_{n}e^{\mathcal{H}}
\bigl(v_{a}\otimes e(a)\bigr)
&=
\begin{cases}
0\ \  \text{ if }\ n>0 \\
\kappa je^{\mathcal{H}}
\bigl(v_{a}\otimes e(a)\bigr)
\ \ \text{ if }\ n=0,
\end{cases}
\\
\widetilde{b}^{-}_{n}e^{\mathcal{H}}
\bigl(v_{a}\otimes e(a)\bigr)
&=
\begin{cases}
0\ \ \text{ if }\ n>0 \\
\kappa je^{\mathcal{H}}
\bigl(v_{a}\otimes e(a)\bigr)
\ \ \text{ if }\ n=0
\end{cases}
\end{flalign*}
for $n\in\mathbb{Z}_{\geq0}$,
the mapping (\ref{4.2.1}) uniquely extends to
the $V_{k}(\mf{sl}_{2})\otimes \mathcal{F}^{+}\otimes\mathcal{F}^{-}$
-module homomorphism
$\mathscr{F}_{M}\colon
	M\otimes\mathcal{F}_{(0,0)}^{+}
	\otimes\mathcal{F}_{(0,0)}^{-}
	\rightarrow
	\iota_{\mf{sl}_{2}}^{*}(M
	\otimes V^{+}\otimes V^{-}).$
Finally, the injectivity of $\mathscr{F}_{M}$
follows from that of (\ref{4.2.1}).

(2) Denote by $\widetilde{L}_{n},\widetilde{J}_{n},\widetilde{G}^{\pm}_{r}$
 and $\widetilde{b}^{-}_{n}$
 the linear operators
$\iota_{\mf{ns}_{2}}(\mathbf{L})_{(n)},$ $\iota_{\mf{ns}_{2}}(\mathbf{J})_{(n)},$
$\iota_{\mf{ns}_{2}}(\mathbf{G}^{\pm})_{(r+\frac{1}{2})}$
and $\iota_{\mf{ns}_{2}}({\boldsymbol \alpha}^{-})_{(n)}$
on $\iota_{\mf{ns}_{2}}^{*}(N\otimes V^{+}\otimes V^{-})$.
Similarly to (1),
by computations, we have
\begin{flalign*}
\widetilde{L}_{n}e^{\mathcal{J}}
\bigl(w_{a}\otimes e(a)\bigr)
&=
e^{\mathcal{J}}\bigl(
L_{n}w_{a}\otimes e(a)\bigr),
\\
\widetilde{J}_{n}e^{\mathcal{J}}
\bigl(w_{a}\otimes e(a)\bigr)
&=
e^{\mathcal{J}}\bigl(
J_{n}w_{a}\otimes e(a)\bigr),
\\
\widetilde{G}^{+}_{r}e^{\mathcal{J}}
\bigl(w_{a}\otimes e(a)\bigr)
&=
e^{\mathcal{J}}\bigl(
G^{+}_{r}w_{a}\otimes e(a+1)\bigr),
\\
\widetilde{G}^{-}_{r}e^{\mathcal{J}}
\bigl(w_{a}\otimes e(a)\bigr)
&=
e^{\mathcal{J}}\bigl(
G^{-}_{r}w_{a}\otimes e(a-1)
\bigr)
\end{flalign*}
for $n\in\mathbb{Z}$.
We also have
\begin{flalign*}
\widetilde{b}^{+}_{n}e^{\mathcal{J}}
\bigl(w_{a}\otimes e(a)\bigr)
&=
\begin{cases}
0\ \  \text{ if }\ n>0 \\
\kappa je^{\mathcal{J}}
\bigl(w_{a}\otimes e(a)\bigr)
\ \ \text{ if }\ n=0,
\end{cases}
\\
\widetilde{b}^{-}_{n}e^{\mathcal{J}}
\bigl(w_{a}\otimes e(a)\bigr)
&=
\begin{cases}
0\ \ \text{ if }\ n>0 \\
\kappa je^{\mathcal{J}}
\bigl(w_{a}\otimes e(a)\bigr)
\ \ \text{ if }\ n=0
\end{cases}
\end{flalign*}
for $n\in\mathbb{Z}_{\geq0}$.
Therefore the same discussion as in (1) works.
\end{proof}

In fact, the previous embeddings $\mathscr{F}_{M}$ and $\mathscr{G}_{N}$ 
extend to the following isomorphisms:

\begin{prp}\label{key2}
(1) As a $V_{k}(\mf{sl}_{2})
\otimes \mathcal{F}^{+}\otimes\mathcal{F}^{-}$-module,
\begin{equation*}
	\bigoplus_{n,m\in\mathbb{Z}}
	M^{\theta+n+m}
	\otimes\mathcal{F}_{(\theta,n)}^{+}
	\otimes\mathcal{F}_{(\theta+n,m)}^{-}
	\overset{\simeq}{\longrightarrow}
	\iota_{\mf{sl}_{2}}^{*}(M^{\theta}
	\otimes V^{+}\otimes V^{-}).
\end{equation*}

(2) As a $V_{c_{k}}(\mf{ns}_{2})
\otimes \mathcal{F}^{+}\otimes\mathcal{F}^{-}$-module,
\begin{equation*}
	\bigoplus_{n,m\in\mathbb{Z}}N^{\theta+n+m}
	\otimes\mathcal{F}_{(\theta+m,n)}^{+}
	\otimes\mathcal{F}_{(\theta,m)}^{-}
	\overset{\simeq}{\longrightarrow}
	\iota_{\mf{ns}_{2}}^{*}(N^{\theta}
	\otimes V^{+}\otimes V^{-}).
\end{equation*}
\end{prp}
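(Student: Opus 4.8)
The plan is to combine the twisted-embedding Lemma \ref{key} with the spectral-flow bookkeeping of Lemma \ref{twchar} and the character identity of Lemma \ref{character}, and then upgrade an injection to an isomorphism by comparing formal characters. I shall spell out (1); part (2) is entirely parallel with $\mathscr{G}_{N}$ in place of $\mathscr{F}_{M}$. First, I would observe that Lemma \ref{key}(1) already produces, for the base case $\theta=0$ and $n=m=0$, an injective $V_{k}(\mf{sl}_{2})\otimes\mathcal{F}^{+}\otimes\mathcal{F}^{-}$-module homomorphism
\[
\mathscr{F}_{M}\colon
M\otimes\mathcal{F}_{(0,0)}^{+}\otimes\mathcal{F}_{(0,0)}^{-}
\longrightarrow
\iota_{\mf{sl}_{2}}^{*}(M\otimes V^{+}\otimes V^{-}).
\]
To get the full summand $M^{\theta+n+m}\otimes\mathcal{F}_{(\theta,n)}^{+}\otimes\mathcal{F}_{(\theta+n,m)}^{-}$ mapping in, I would apply $\mathscr{F}_{M}$ to the module $M^{\theta}$ in place of $M$ (legitimate, since by Lemma \ref{twchar} $M^{\theta}$ lies in the block $\mathcal{LW}_{V_{k}(\mf{sl}_{2})}^{\overline{(h,j;\,\theta)}}$, so the hypotheses of Lemma \ref{key} are met after shifting $j\mapsto j+\tfrac{k\theta}{2}$), and then translate the Fock-module charges by $(n,m)$ using the lattice action of $e^{n\alpha^{+}}\otimes e^{m\alpha^{-}}$ inside $V^{+}\otimes V^{-}$. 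This produces, for each fixed $(n,m)$, an injection of $M^{\theta+n+m}\otimes\mathcal{F}_{(\theta,n)}^{+}\otimes\mathcal{F}_{(\theta+n,m)}^{-}$ into $\iota_{\mf{sl}_{2}}^{*}(M^{\theta}\otimes V^{+}\otimes V^{-})$; the shift $\theta\mapsto\theta+n+m$ in the first tensor factor is dictated by the charge-raising formulae $\widetilde{E}_{n}e^{\mathcal{H}}(v_a\otimes e(a))=e^{\mathcal{H}}(E_nv_a\otimes e(a+1))$ established in the proof of Lemma \ref{key}, which force the integer labels to add coherently.

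Next I would assemble these pieces into a single homomorphism
\[
\Phi\colon\bigoplus_{n,m\in\mathbb{Z}}
M^{\theta+n+m}\otimes\mathcal{F}_{(\theta,n)}^{+}\otimes\mathcal{F}_{(\theta+n,m)}^{-}
\longrightarrow
\iota_{\mf{sl}_{2}}^{*}(M^{\theta}\otimes V^{+}\otimes V^{-}).
\]
To see $\Phi$ is injective, I would check that the images of distinct summands land in distinct $\mf{h}$-weight spaces (equivalently, that the $(n,m)$ are recovered from the $\alpha^{\pm}$-charges of the target), so injectivity of each $\mathscr{F}_{M^{\theta}}$-component upgrades to injectivity of the direct sum; this is where condition (1) of Definition \ref{WWA} and the explicit charges $\mathcal{F}_{(\theta,n)}^{+},\mathcal{F}_{(\theta+n,m)}^{-}$ do the work. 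For surjectivity, rather than argue module-theoretically, I would invoke Lemma \ref{character}(1), which says precisely that the formal character of the domain of $\Phi$ equals the formal character of its codomain. Since all graded pieces $M(h,\lambda)\otimes(\text{Fock weight spaces})$ are finite-dimensional by Definition \ref{WWA}(2) and the weight spaces on both sides match in each bidegree $(h,\lambda)$, an injective graded map between spaces with equal (finite) graded dimensions in every bidegree is automatically an isomorphism. Hence $\Phi$ is the desired $V_{k}(\mf{sl}_{2})\otimes\mathcal{F}^{+}\otimes\mathcal{F}^{-}$-module isomorphism.

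The main obstacle I anticipate is the careful bookkeeping of the charge labels: one must verify that the particular indexing $\mathcal{F}_{(\theta,n)}^{+}\otimes\mathcal{F}_{(\theta+n,m)}^{-}$ in the statement is exactly the one produced by iterating the charge shifts in Lemma \ref{key} together with the spectral-flow relabeling in Lemma \ref{twchar}, and in particular that the coefficient $\tfrac{k}{2}$ appearing in the definitions $\mathcal{F}_{(n,m)}^{\pm}=\mathcal{F}_{\kappa(j+\frac{k}{2}n\mp\cdots)}^{\pm}$ is consistent with the formulae for $\iota_{\pm}$ in Fact \ref{coset1} (e.g.\ the image of $\mathbf{J}$ involving $\tfrac{\kappa^{2}}{2}\mathbf{H}$ and $\tfrac{c_k}{3}\alpha^{+}_{-1}$). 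I would handle this by reducing, via the categorical isomorphisms $\Delta^{\theta}_{\mf{sl}_{2}}$ of the Corollary following Lemma \ref{twchar}, to the case $\theta=0$, and then reading off the charges directly from the $n=0$ case $\mathscr{F}_{M}$ of Lemma \ref{key}(1) and its translates by $e^{n\alpha^{+}}\otimes e^{m\alpha^{-}}$. Everything else — the module-homomorphism property, the finite-dimensionality of graded pieces, the injectivity — is either immediate from Lemma \ref{key} or from the definitions, so no further genuinely new computation is needed beyond what Lemmas \ref{twvacc}, \ref{twchar}, \ref{character}, and \ref{key} provide.
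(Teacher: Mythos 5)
Your overall strategy matches the paper's: assemble one embedding per pair $(n,m)$, use the distinctness of $\alpha^{\pm}_{(0)}$-charges to get injectivity of the direct sum, then upgrade to an isomorphism via the character identity of Lemma~\ref{character}. The character-based surjectivity step and the injectivity argument are exactly as in the paper, and are sound.

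The genuine gap is in how you produce the individual embedding
$M^{\theta+n+m}\otimes\mathcal{F}_{(\theta,n)}^{+}\otimes\mathcal{F}_{(\theta+n,m)}^{-}\hookrightarrow\iota_{\mf{sl}_{2}}^{*}(M^{\theta}\otimes V^{+}\otimes V^{-})$ for general $(n,m)$. ``Apply $\mathscr{F}$ to $M^{\theta}$ and then translate the Fock charges by the lattice action of $e^{n\alpha^{+}}\otimes e^{m\alpha^{-}}$'' is not yet a construction of a $V_{k}(\mf{sl}_{2})\otimes\mathcal{F}^{+}\otimes\mathcal{F}^{-}$-module homomorphism: lattice vertex operators shift charges but are not module morphisms, and they do not explain why the first tensor factor must become $M^{\theta+n+m}$ rather than $M^{\theta}$. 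Your appeal to the relation $\widetilde{E}_{n}e^{\mathcal{H}}(v_a\otimes e(a))=e^{\mathcal{H}}(E_n v_a\otimes e(a+1))$ is a plausibility check, not a construction. The paper closes this exactly where you flag the ``main obstacle'': it applies the functor $\Delta^{h(\theta,n,m)}$ to the single morphism $\mathscr{F}_{M}$, with the specific vector
\begin{equation*}
h(\theta,n,m)=\tfrac{\theta+n+m}{2}\mathbf{H}
+\bigl(\tfrac{k}{2}\theta-n\bigr)\kappa\,\boldsymbol{\alpha}^{+}
-\bigl(\tfrac{k}{k+2}n+m\bigr)\kappa^{-1}\boldsymbol{\alpha}^{-},
\end{equation*}
chosen so that $\iota_{\mf{sl}_{2}}\bigl(h(\theta,n,m)\bigr)=\tfrac{\theta}{2}\mathbf{H}+n\alpha^{+}_{-1}+m\alpha^{-}_{-1}$. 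This one computation is what makes the twist on the source give exactly the summand $M^{\theta+n+m}\otimes\mathcal{F}_{(\theta,n)}^{+}\otimes\mathcal{F}_{(\theta+n,m)}^{-}$ while the twist on the target is a pullback along $\iota_{\mf{sl}_{2}}$ of the lattice twist $(V^{+})^{n}\otimes(V^{-})^{m}$, which is then re-identified with $V^{+}\otimes V^{-}$ by $\xi^{+}_{n}\otimes\xi^{-}_{m}$. Your separate reduction to $\theta=0$ via the corollary of Lemma~\ref{twchar} is unnecessary once you adopt this mechanism, since $h(\theta,n,m)$ handles all $\theta$ uniformly. To repair your write-up, replace ``translate by lattice action'' with the composite $\iota_{\mf{sl}_{2}}^{*}(\id_{M}\otimes\xi^{+}_{n}\otimes\xi^{-}_{m})\circ\Delta^{h(\theta,n,m)}(\mathscr{F}_{M})$ and supply the computation of $\iota_{\mf{sl}_{2}}\bigl(h(\theta,n,m)\bigr)$; the remainder of your argument then goes through unchanged.
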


\begin{proof}
(1) Put 
$h(\theta,n,m):=\frac{\theta+n+m}{2}\mathbf{H}
+(\frac{k}{2}\theta-n)\kappa{\boldsymbol \alpha}^{+}
-(\frac{k}{k+2}n+m)\kappa^{-1}{\boldsymbol \alpha}^{-}.$
Then we have
$\iota_{\mf{sl}_{2}}\bigl(h(\theta,n,m)\bigr)
=\frac{\theta}{2}\mathbf{H}+n\alpha^{+}_{-1}+m\alpha^{-}_{-1}$
and the composition
$\iota_{\mf{sl}_{2}}^{*}(\id_{M}\otimes\xi^{+}_{n}\otimes\xi^{-}_{m})
\circ\Delta^{h(\theta,n,m)}(\mathscr{F}_{M})$
gives an embedding
\begin{equation*}
	M^{\theta+n+m}
	\otimes\mathcal{F}_{(\theta,n)}^{+}
	\otimes\mathcal{F}_{(\theta+n,m)}^{-}
	\rightarrow
	\iota_{\mf{sl}_{2}}^{*}(M^{\theta}
	\otimes V^{+}\otimes V^{-}),
\end{equation*}
where $\xi^{+}_{n}$ and $\xi^{-}_{m}$ are defined in Section \ref{HLtw}.
Since each images for $n,m\in\mathbb{Z}$
are in distinct eigenspaces with respect to 
${\boldsymbol \alpha}^{\pm}_{(0)}$,
the sum of these embeddings induces an injective homomorphism
\begin{equation}\label{TWaff}
	\bigoplus_{n,m\in\mathbb{Z}}
	M^{\theta+n+m}
	\otimes\mathcal{F}_{(\theta,n)}^{+}
	\otimes\mathcal{F}_{(\theta+n,m)}^{-}
	\rightarrow
	\iota_{\mf{sl}_{2}}^{*}(M^{\theta}
	\otimes V^{+}\otimes V^{-}).
\end{equation}
By Lemma \ref{character}, 
this injection gives an isomorphism.

(2) Put 
$h'(\theta,n,m):=(\theta+n+m)\mathbf{J}
+(\frac{k}{2}(\theta+m)-n)\kappa{\boldsymbol \alpha}^{+}
-(\frac{k}{k+2}\theta+m)\kappa^{-1}{\boldsymbol \alpha}^{-}.$
Then we have
$\iota_{\mf{ns}_{2}}\bigl(h'(\theta,n,m)\bigr)
=\theta\mathbf{J}+n\alpha^{+}_{-1}+m\alpha^{-}_{-1}$
and the composition
$\iota_{\mf{ns}_{2}}^{*}(\id_{N}\otimes\xi^{+}_{n}\otimes\xi^{-}_{m})
\circ\Delta^{h'(\theta,n,m)}(\mathscr{G}_{N})$
gives rise to an embedding
\begin{equation}\label{TWsca}
	\bigoplus_{n,m\in\mathbb{Z}}N^{\theta+n+m}
	\otimes\mathcal{F}_{(\theta+m,n)}^{+}
	\otimes\mathcal{F}_{(\theta,m)}^{-} \rightarrow
	\iota_{\mf{ns}_{2}}^{*}(N^{\theta}
	\otimes V^{+}\otimes V^{-})
\end{equation}
in the same way as (1).
By Lemma \ref{character}, this gives an isomorphism.
\end{proof}

\subsection{Construction of natural transformation}

By the following proposition, we complete the proof of Theorem \ref{main}.

\begin{prp}
(1) The assignments
\begin{equation*}
\mathscr{F}(M)\colon
M\rightarrow
\Omega^{-}_{j}\circ
\Omega^{+}_{j}(M);\,
v\mapsto
e^{\mathcal{H}}
\left(
\sum_{a\in\mathbb{Z}}v_{a}\otimes e^{a\alpha^{+}}
\otimes e^{-a\alpha^{-}}\right),
\end{equation*}
\begin{equation*}
\mathscr{G}(N)\colon
N\rightarrow
\Omega^{+}_{j}\circ\Omega^{-}_{j}(N);\,
w\mapsto
e^{\mathcal{J}}
\left(
\sum_{a\in\mathbb{Z}}w_{a}\otimes e^{a\alpha^{+}}
\otimes e^{-a\alpha^{-}}\right)
\end{equation*}
give a $V_{k}(\mf{sl}_{2})$-module isomorphism
and a $V_{c_{k}}(\mf{ns}_{2})$-module isomorphism, respectively.

(2) The sets of isomorphisms $\mathscr{F}$ and $\mathscr{G}$
give natural transformations
${\sf id}_{\mathcal{C}_{V_{k}(\mf{sl}_{2})}^{\overline{(h,j)}}}
\simeq
\Omega^{-}_{j}\circ
\Omega^{+}_{j}$
and
${\sf id}_{\mathcal{C}_{V_{c_{k}}(\mf{ns}_{2})}
^{\overline{(h-\frac{j^{2}}{k+2},\frac{2j}{k+2})}}}
\simeq\Omega^{+}_{j}\circ\Omega^{-}_{j}$,
respectively.
\end{prp}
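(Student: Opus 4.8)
The plan is to promote the twisted-embedding isomorphisms of Proposition \ref{key2} (at $\theta=0$) into the desired natural isomorphisms, using the branching decomposition of Lemma \ref{twvacc} to identify the target functors on the nose. First I would fix $M$ as in the standing assumption and compare the two descriptions of $\iota_{\mf{sl}_{2}}^{*}(M\otimes V^{+}\otimes V^{-})$: on the one hand Lemma \ref{twvacc}(1) gives the abstract decomposition into summands $\Omega^{-}_{j+\frac{k}{2}n+\frac{k+2}{2}m}\circ\Omega^{+}_{j-n}(M)\otimes\mathcal{F}^{+}_{(0,n)}\otimes\mathcal{F}^{-}_{(n,m)}$, while Proposition \ref{key2}(1) with $\theta=0$ gives the explicit decomposition into $M^{n+m}\otimes\mathcal{F}^{+}_{(0,n)}\otimes\mathcal{F}^{-}_{(n,m)}$ via the operator $e^{\mathcal{H}}$. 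Matching the $(n,m)=(0,0)$ summands (which are distinguished among all summands by their $\boldsymbol{\alpha}^{\pm}_{(0)}$-eigenvalues) yields precisely the $V_{k}(\mf{sl}_{2})$-module identification $M\cong\Omega^{-}_{j}\circ\Omega^{+}_{j}(M)$, and tracing the canonical generator $v\otimes\ket{(0,0)}^{+}\otimes\ket{(0,0)}^{-}$ through the identifications shows that this isomorphism is exactly the formula $\mathscr{F}(M)$ given in the statement. This proves part (1) for $\mathscr{F}$; the argument for $\mathscr{G}$ is identical, using Lemma \ref{twvacc}(2), Proposition \ref{key2}(2), and $e^{\mathcal{J}}$.

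For part (2) I would check naturality directly from the formula. Given a morphism $\phi\colon M_{1}\to M_{2}$ in the block, I need $\mathscr{F}(M_{2})\circ\phi = \bigl((\Omega^{-}_{j}|)\circ(\Omega^{+}_{j}|)(\phi)\bigr)\circ\mathscr{F}(M_{1})$. Both $\Omega^{+}_{j}$ and $\Omega^{-}_{j}$ are defined by $\Hom$ out of a fixed Fock module after the pullback and tensoring with $V^{\pm}$, so the induced map on the composite is just post-composition with $\phi\otimes\id_{V^{+}}\otimes\id_{V^{-}}$ restricted to the relevant summand; since $\phi$ commutes with the $\mf{h}$-action it preserves the eigenspace decomposition $v=\sum_{a}v_{a}$, and since $\mathcal{H}$ is built only from operators of $V_{k}(\mf{sl}_{2})\otimes V^{+}\otimes V^{-}$ that commute with $\phi\otimes\id\otimes\id$, the operator $e^{\mathcal{H}}$ intertwines $\phi$ on both sides. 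Hence the square commutes. The same reasoning applies to $\mathscr{G}$.

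Finally, combining with the fact (already recorded before the statement) that $\Omega^{\pm}_{j}|$ land in the correct blocks, parts (1) and (2) together say that $(\Omega^{-}_{j}|)\circ(\Omega^{+}_{j}|)\cong\mathrm{Id}$ and $(\Omega^{+}_{j}|)\circ(\Omega^{-}_{j}|)\cong\mathrm{Id}$ as functors, which is precisely the assertion of Theorem \ref{main}. The main obstacle I anticipate is the bookkeeping in the first step: one must verify that the explicit summand produced by the composition of the two twisted embeddings $\mathscr{G}$ and $\mathscr{F}$ (equivalently, the $(0,0)$-piece singled out inside $\iota_{\mf{sl}_{2}}^{*}$) really is carried isomorphically onto $\Omega^{-}_{j}\circ\Omega^{+}_{j}(M)$ compatibly with the module structures on \emph{both} $V_{c_{k}}(\mf{ns}_{2})$ and the auxiliary Heisenberg factors, so that the resulting map is a genuine $V_{k}(\mf{sl}_{2})$-module map and not merely a vector-space bijection; this is where the compatibility computations of Lemma \ref{key} (the commutation of $\widetilde{X}_{n}$ with $e^{\mathcal{H}}$, $e^{\mathcal{J}}$) are used in an essential way. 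Everything else is formal.
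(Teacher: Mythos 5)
Your proposal is correct and follows essentially the same route as the paper. The paper's published proof of part (1) is terse — it simply says "Both statements follow from Lemma \ref{twvacc}, \ref{key} and Proposition \ref{key2}" — and your first paragraph is a faithful expansion of that: compare the abstract branching decomposition of Lemma \ref{twvacc} with the explicit twisted-embedding decomposition of Proposition \ref{key2} (at $\theta=0$), single out the $(n,m)=(0,0)$ summand by its $\boldsymbol{\alpha}^{\pm}_{(0)}$-charge, and read off the isomorphism $M\cong(\Omega^{-}_{j}|)\circ(\Omega^{+}_{j}|)(M)$ together with the explicit formula $\mathscr{F}(M)$. Your proof of part (2) also mirrors the paper exactly: naturality reduces to the observations that a morphism $\phi$ respects the $\mf{h}$-eigenspace decomposition $v=\sum_{a}v_{a}$ and that $e^{\mathcal{H}}$ (resp.\ $e^{\mathcal{J}}$) commutes with $\phi\otimes\id_{V^{+}}\otimes\id_{V^{-}}$. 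Your closing remark, that Lemma \ref{key} is what guarantees compatibility with the module structures (not just a linear bijection), is precisely why that lemma appears in the paper's citation list; no gap there.
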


\begin{proof}
(1) Both statements follow from
Lemma \ref{twvacc}, \ref{key} and Proposition \ref{key2}.

(2) Let $f\in\Hom_{V_{k}(\mf{sl}_{2})}(M,M')$
and $g\in\Hom_{V_{c_{k}}(\mf{ns}_{2})}(N,N')$.
It suffices to show that the following diagrams
\[\xymatrix@!C=54pt{
M \ar[d]_{\mathscr{F}(M)} 
 \ar[r]^{f}
 \ar@{}[dr] &
M' \ar[d]^{\mathscr{F}(M')} & 
N \ar[d]_{\mathscr{G}(N)} 
 \ar[r]^{g}
 \ar@{}[dr] &
N' \ar[d]^{\mathscr{G}(N')}
\\
\Omega_{j}^{+}(M)
 \ar[r]^{\Omega_{j}^{+}(f)} &
\Omega_{j}^{+}(M') &
\Omega_{j}^{-}(N)
 \ar[r]^{\Omega_{j}^{-}(g)} &
\Omega_{j}^{-}(N')
\\
}\]
commute.
Since $f$ commutes with the action of $V_{k}(\mf{sl}_{2})$,
we have
$\bigl(f(v)\bigr)_{a}=f(v_{a})$ for $v\in M$ and $a\in\mathbb{Z}$.
It also follows that
the operators $\Omega_{j}^{+}(f)$ and $e^{\mathcal{H}}$
 commute in $\End(M\otimes V^{+}\otimes V^{-})$.
Then the former diagram commutes.
The latter is proved in the same way.
\end{proof}

\section{Spectral flow equivariance}

In this section we discuss the relationship 
between the functors $\Omega^{\pm}_{j}$ and the spectral flow automorphisms.
To simplify notation, we put
\begin{flalign*}
&h(\theta,r):=(\theta+r)\mathbf{J}+\left(\frac{k}{2}\theta-r\right)
\kappa{\boldsymbol \alpha}^{+}
\in V_{c_{k}}(\mf{ns}_{2})\otimes \mathcal{F}^{+}, \\
&h'(\theta,r):=
\frac{\theta+r}{2}\mathbf{H}-\left(\frac{k}{k+2}\theta+r\right)
\kappa^{-1}{\boldsymbol \alpha}^{-}
\in V_{k}(\mf{sl}_{2})\otimes\mathcal{F}^{-}
\end{flalign*}
for $\theta,r\in\mathbb{Z}$.

\begin{prp}\label{5.1}
(1) As a $V_{c_{k}}(\mf{ns}_{2})\otimes \mathcal{F}^{+}$-module,
\begin{equation}\label{br11}
\bigoplus_{n\in\mathbb{Z}}
\Omega^{+}_{j}(M)^{n}
\otimes\mathcal{F}_{(0,n)}^{+}
\simeq
\iota_{+}^{*}(M\otimes V^{+}).
\end{equation}

(2) As a $V_{k}(\mf{sl}_{2})\otimes\mathcal{F}^{-}$-module,
\begin{equation}\label{br22}
\bigoplus_{m\in\mathbb{Z}}
\Omega^{-}_{j}(N)^{m}
\otimes\mathcal{F}_{(0,m)}^{-}
\simeq
\iota_{-}^{*}(N\otimes V^{-}).
\end{equation}
\end{prp}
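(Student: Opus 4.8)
The plan is to reduce both assertions to the already-established decompositions of Lemma \ref{vacc} together with the behaviour of twists under restriction along $\iota_{+}$ and $\iota_{-}$. First I would observe that formula \eqref{vac1} gives, for $\Omega^{+}_{j}$, the decomposition $\iota_{+}^{*}(M\otimes V^{+})\cong\bigoplus_{n\in\mathbb{Z}}\Omega^{+}_{j-n}(M)\otimes\mathcal{F}^{+}_{(0,n)}$; the task is thus to identify $\Omega^{+}_{j-n}(M)$ with $\Omega^{+}_{j}(M)^{n}$. The natural way to do this is to apply a twist $\Delta^{h(0,n)}$ on the source $V_{c_{k}}(\mf{ns}_{2})\otimes\mathcal{F}^{+}$ using the element $h(0,n)=n\mathbf{J}-n\kappa{\boldsymbol\alpha}^{+}$ introduced above, whose image $\iota_{+}\bigl(h(0,n)\bigr)$ is (by Fact \ref{coset1}(1)) a multiple of $\alpha^{+}_{-1}$ in $V_{k}(\mf{sl}_{2})\otimes V^{+}$ --- in particular it acts trivially on the $V_{k}(\mf{sl}_{2})$-factor. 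Conjugating \eqref{vac1} by this twist, and using the standard isomorphism $\xi^{+}_{n}$ (from Section \ref{HLtw}) between the twisted Heisenberg Fock module $\Delta^{n\kappa{\boldsymbol\alpha}^{+}}(\mathcal{F}^{+}_{\lambda})$ and $\mathcal{F}^{+}_{\lambda-n}$, should turn the summand $\Omega^{+}_{j-n}(M)\otimes\mathcal{F}^{+}_{(0,n)}$ into $\Omega^{+}_{j}(M)^{n}\otimes\mathcal{F}^{+}_{(0,n)}$ while leaving the total space $\iota_{+}^{*}(M\otimes V^{+})$ unchanged, since the twist acts trivially on $M$.

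Concretely, the key steps for part (1) are: (i) write down $\iota_{+}\bigl(h(0,n)\bigr)=-n\alpha^{+}_{-1}$ (up to the normalization used in Fact \ref{coset1}(1)) and conclude that $\Delta^{h(0,n)}$ commutes with the $V_{c_{k}}(\mf{ns}_{2})$-action after restriction along $\iota_{+}$ and only shifts the $\mathcal{F}^{+}$-charge; (ii) verify $\Omega^{+}_{j-n}(M)\cong\Omega^{+}_{j}(M)^{n}$ as $V_{c_{k}}(\mf{ns}_{2})$-modules by comparing the defining $\Hom$-spaces before and after this twist, matching charges via $\mathcal{F}^{+}_{\kappa(j-n)}\cong\Delta^{\text{(twist)}}\mathcal{F}^{+}_{\kappa j}$; (iii) substitute back into \eqref{vac1} to obtain \eqref{br11}. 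Part (2) is entirely parallel: replace $\iota_{+}$, $\mathcal{F}^{+}$, $\mathbf{J}$, $h(\theta,r)$ by $\iota_{-}$, $\mathcal{F}^{-}$, $\mathbf{H}$, $h'(\theta,r)$, start from \eqref{vac2} in Lemma \ref{vacc}(2), and use that $\iota_{-}\bigl(h'(0,m)\bigr)$ is a multiple of $\alpha^{-}_{-1}$, hence acts trivially on the $V_{k}(\mf{sl}_{2})$-factor.

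I expect the main obstacle to be the bookkeeping in step (ii): one must check that the charge shift produced by the twist $\Delta^{h(0,n)}$ on $\mathcal{F}^{+}_{\kappa j}$ is exactly $-n$ (so that it lands on $\mathcal{F}^{+}_{\kappa(j-n)}$ as required by the index convention $\mathcal{F}^{+}_{(0,n)}=\mathcal{F}^{+}_{\kappa(j-n)}$), and simultaneously that the induced twist on the ambient module $M\otimes V^{+}$ is the identity on the $M$-part and the correct lattice translation on the $V^{+}$-part, so that the total space is genuinely unaffected. This is a sign- and normalization-tracking exercise through Fact \ref{coset1}, the definitions of $\mathcal{F}^{+}_{(n,m)}$, and the conventions for $\xi^{+}_{n}$ and $\Delta^{\theta}$; none of it is deep, but it is where an error would most easily creep in. Once the identification $\Omega^{+}_{j-n}(M)\cong\Omega^{+}_{j}(M)^{n}$ is in hand, \eqref{br11} and \eqref{br22} follow immediately from Lemma \ref{vacc}.
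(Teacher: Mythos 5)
Your proposal matches the paper's proof essentially step for step: reduce via Lemma \ref{vacc} to the identification $\Omega^{+}_{j}(M)^{n}\cong\Omega^{+}_{j-n}(M)$, then obtain it by applying the twist functor $\Delta^{h(0,n)}$ to the isomorphism \eqref{vac1}, using that $\iota_{+}\bigl(h(0,n)\bigr)$ is a pure $\alpha^{+}_{-1}$-term (hence leaves the $M$-factor untouched) and then untwisting the $V^{+}$-factor by $\xi^{+}_{n}$. Two small slips worth correcting: the normalization in Fact \ref{coset1}(1) actually gives $\iota_{+}\bigl(h(0,n)\bigr)=+n\alpha^{+}_{-1}$, not $-n\alpha^{+}_{-1}$ (since $\frac{c_{k}}{3}+\kappa^{2}=1$); and $\xi^{+}_{n}$ is the isomorphism $(V^{+})^{n}\to V^{+}$ of twisted lattice modules from Section \ref{HLtw}, not an isomorphism of Heisenberg Fock modules --- the identification $\Delta^{\mu\boldsymbol{\alpha}^{+}}(\mathcal{F}^{+}_{\lambda})\cong\mathcal{F}^{+}_{\lambda+\mu}$ is a separate (unnamed) convention fixed in the same subsection, and both are needed in the argument.
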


\begin{proof}
(1) By using Lemma \ref{vacc} (1), it suffices to show that
$\Omega^{+}_{j}(M)^{n}\simeq\Omega^{+}_{j-n}(M)$
as a $V_{c_{k}}(\mf{ns}_{2})$-module.
Let $r\in\mathbb{Z}$ and denote by
$f_{M}$ the isomorphism in (\ref{vac1}).
Since 
$\iota_{+}\bigl(h(0,r)\bigr)
=r\alpha^{+}_{-1}\in V_{k}(\mf{sl}_{2})\otimes V^{+}$,
we get the $V_{c_{k}}(\mf{ns}_{2})\otimes \mathcal{F}^{+}$-module
 isomorphism
\begin{equation*}
\Delta^{h(0,r)}(f_{M})\colon\bigoplus_{n\in\mathbb{Z}}
\Omega^{+}_{j-n}(M)^{r}\otimes \mathcal{F}^{+}_{(0,n+r)}
\overset{\simeq}{\longrightarrow}
\iota_{+}^{*}\bigl(M\otimes (V^{+})^{r}\bigr).
\end{equation*}
By using the $V^{+}$-module isomorphism
$\xi^{+}_{r}\colon(V^{+})^{r}\overset{\simeq}
{\longrightarrow} V^{+}$,
the composition 
$(f_{M})^{-1}
\circ\iota^{*}_{+}(\id_{M}\otimes\xi^{+}_{r})
\circ\Delta^{h(0,r)}(f_{M})$
gives a $V_{c_{k}}(\mf{ns}_{2})\otimes \mathcal{F}^{+}$-module
 isomorphism
\begin{equation*}
\bigoplus_{n\in\mathbb{Z}}
\Omega^{+}_{j-n}(M)^{r}\otimes \mathcal{F}^{+}_{(0,n+r)}
\overset{\simeq}{\longrightarrow}
\bigoplus_{n\in\mathbb{Z}}
\Omega^{+}_{j-n-r}(M)\otimes \mathcal{F}^{+}_{(0,n+r)}.
\end{equation*}
The restriction of this mapping to the 
invariant subspace
$\Omega^{+}_{j}(M)^{r}\otimes\mathbb{C}\ket{(0,r)}^{+}$
gives a $V_{c_{k}}(\mf{ns}_{2})$-module isomorphism
\begin{equation*}
\Omega^{+}_{j}(M)^{r}\otimes\mathbb{C}\ket{(0,r)}^{+}
\overset{\simeq}{\longrightarrow}
\Omega^{+}_{j-r}(M)\otimes\mathbb{C}\ket{(0,r)}^{+}.
\end{equation*}
This completes the proof.

(2) Applying the functor $\Delta^{h'(0,r)}$
to the isomorphism (\ref{vac2}),
we also get the isomorphism
\begin{equation*}
\bigoplus_{m\in\mathbb{Z}}
\Omega^{-}_{j+\frac{k+2}{2}m}(N)^{r}\otimes\mathcal{F}^{-}_{(0,m+r)}
\overset{\simeq}{\longrightarrow}
\iota_{-}^{*}\bigl(N\otimes (V^{-})^{r}\bigr).
\end{equation*}
Since $(V^{-})^{r}\simeq V^{-}$ as a $V^{-}$-module, 
we obtain
$$
\Omega^{-}_{j}(N)^{r}\otimes\mathbb{C}\ket{(0,r)}^{-}
\overset{\simeq}{\longrightarrow}
\Omega^{-}_{j+\frac{k+2}{2}r}(N)\otimes\mathbb{C}\ket{(0,r)}^{-}
$$
 in the same way as (1).
\end{proof}

\begin{cor}
(1) As a $V_{c_{k}}(\mf{ns}_{2})\otimes \mathcal{F}^{+}$-module,
\begin{equation*}
\bigoplus_{n\in\mathbb{Z}}\Omega^{+}_{j}(M)^{\theta+n}
\otimes\mathcal{F}_{(\theta,n)}^{+}
\simeq
\iota_{+}^{*}(M^{\theta}\otimes V^{+}).
\end{equation*}

(2) As a $V_{k}(\mf{sl}_{2})\otimes\mathcal{F}^{-}$-module,
\begin{equation*}
\bigoplus_{m\in\mathbb{Z}}\Omega^{-}_{j}(N)^{\theta+m}
\otimes\mathcal{F}_{(\theta,m)}^{+}
\simeq
\iota_{-}^{*}(N^{\theta}\otimes V^{+}).
\end{equation*}
\end{cor}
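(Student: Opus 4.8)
The plan is to obtain both statements as direct specializations of Proposition \ref{5.1}, composed with the spectral flow twist functors $\Delta^{\theta}_{\mf{sl}_{2}}$ and $\Delta^{\theta}_{\mf{ns}_{2}}$. The key observation is that the proposition already handles the case $\theta=0$, and the general $\theta$ is recovered by applying the appropriate spectral flow automorphism to the whole isomorphism of conformal vertex superalgebras and tracking how the Fock indices shift.

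For part (1), I would start from the isomorphism (\ref{br11}) in Proposition \ref{5.1}, which exhibits $\iota_{+}^{*}(M\otimes V^{+})$ as $\bigoplus_{n}\Omega^{+}_{j}(M)^{n}\otimes\mathcal{F}_{(0,n)}^{+}$ as a $V_{c_{k}}(\mf{ns}_{2})\otimes\mathcal{F}^{+}$-module. Next I would apply the functor $\Delta^{h(\theta,0)}$ (in the notation of Section 6, with $h(\theta,0)=\theta\mathbf{J}+\frac{k}{2}\theta\,\kappa{\boldsymbol\alpha}^{+}$), which satisfies $\iota_{+}\bigl(h(\theta,0)\bigr)=\frac{\theta}{2}\mathbf{H}\in V_{k}(\mf{sl}_{2})\otimes V^{+}$; hence pulling back along $\iota_{+}$ twists the left factor $M$ to $M^{\theta}$. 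Composing with the $V^{+}$-module isomorphism $\xi^{+}_{\theta}\colon (V^{+})^{\theta}\xrightarrow{\cong}V^{+}$ then re-indexes the Fock factors, carrying $\Omega^{+}_{j}(M)^{n}\otimes\mathcal{F}^{+}_{(0,n)}$ to $\Omega^{+}_{j}(M)^{\theta+n}\otimes\mathcal{F}^{+}_{(\theta,n)}$, exactly as in the proof of Proposition \ref{5.1} itself. This yields the claimed isomorphism. Part (2) is entirely parallel: apply $\Delta^{h'(\theta,0)}$ with $h'(\theta,0)=\frac{\theta}{2}\mathbf{H}-\frac{k}{k+2}\theta\,\kappa^{-1}{\boldsymbol\alpha}^{-}$ to the isomorphism (\ref{br22}), use $\iota_{-}\bigl(h'(\theta,0)\bigr)=\theta\mathbf{J}$ to twist $N$ into $N^{\theta}$, and re-index via $\xi^{-}_{\theta}\colon(V^{-})^{\theta}\xrightarrow{\cong}V^{-}$.

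The only genuine bookkeeping is to confirm that the Fock-charge shifts match the definitions $\mathcal{F}^{+}_{(n,m)}=\mathcal{F}^{+}_{\kappa(j+\frac{k}{2}n-m)}$ and $\mathcal{F}^{-}_{(n,m)}=\mathcal{F}^{-}_{\kappa(j+\frac{k}{2}n+\frac{k+2}{2}m)}$; this is the same computation already carried out in the proofs of Proposition \ref{5.1} and Proposition \ref{key2}, so I expect no obstacle beyond a short index verification. (I would also note the apparent typo in part (2) of the Corollary as stated: the summand should read $\mathcal{F}^{-}_{(\theta,m)}$ and the target $\iota_{-}^{*}(N^{\theta}\otimes V^{-})$, to be consistent with (\ref{br22}).) Given the machinery of Section 5 and the $\theta=0$ case, the corollary follows formally, so the proof will be just a few lines invoking Proposition \ref{5.1} and the twist functors.
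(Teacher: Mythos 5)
Your approach is exactly the paper's: the corollary is proved in one line by applying $\Delta^{h(\theta,0)}$ to (\ref{br11}) and $\Delta^{h'(\theta,0)}$ to (\ref{br22}), and you also correctly flag the typo in part (2). There is, however, one small slip in your bookkeeping: you attribute the re-indexing $\mathcal{F}^{+}_{(0,n)}\rightsquigarrow\mathcal{F}^{+}_{(\theta,n)}$ to a composition with $\xi^{+}_{\theta}\colon(V^{+})^{\theta}\to V^{+}$, but since $\iota_{+}\bigl(h(\theta,0)\bigr)=\tfrac{\theta}{2}\mathbf{H}$ has no $\alpha^{+}_{-1}$ component, the $V^{+}$ factor is not twisted at all and no $\xi^{+}_{\theta}$ enters. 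The Fock shift comes directly from the ${\boldsymbol\alpha}^{+}$ part $\tfrac{k}{2}\theta\,\kappa{\boldsymbol\alpha}^{+}$ of $h(\theta,0)$ acting on $\mathcal{F}^{+}_{(0,n)}=\mathcal{F}^{+}_{\kappa(j-n)}$, giving $\mathcal{F}^{+}_{\kappa(j+\frac{k}{2}\theta-n)}=\mathcal{F}^{+}_{(\theta,n)}$, while the $\theta\mathbf{J}$ part twists $\Omega^{+}_{j}(M)^{n}$ to $\Omega^{+}_{j}(M)^{\theta+n}$. The $\xi^{\pm}_{r}$ maps are needed in the proof of Proposition \ref{5.1} itself, where $\iota_{\pm}\bigl(h(0,r)\bigr)$ lands on $\alpha^{\pm}_{-1}$ and genuinely twists $V^{\pm}$; here they are superfluous. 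This does not affect the validity of the result, only the exposition.
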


\begin{proof}
The statement follows by applying the functors $\Delta^{h(\theta,0)}$ and $\Delta^{h'(\theta,0)}$
to the isomorphisms (\ref{br11}) and (\ref{br22}), respectively.
\end{proof}

As a consequence, we obtain the following ``spectral flow equivariance'' property
of the functors $\Omega^{+}_{j}$ and $\Omega^{-}_{j}$.

\begin{cor}\label{sfeq}
For $a,b\in\mathbb{Z}$, the following diagrams
\[\xymatrix@!C=84pt{
\mathcal{C}^{\overline{(h,j;0)}}_{V_{k}(\mf{sl}_{2})} \ar[d]_{\Delta^{a}_{\mf{sl}_{2}}} 
 \ar[r]^{\Omega^{+}_{j}}
 &
\mathcal{C}^{\overline{(h,j;0)}}_{V_{c_{k}}(\mf{ns}_{2})} \ar[d]^{\Delta^{a+b}_{\mf{ns}_{2}}} \\
\mathcal{C}^{\overline{(h,j;a)}}_{V_{k}(\mf{sl}_{2})}
 \ar[r]^{\Omega_{j+\frac{k}{2}a-b}^{+}} &
\mathcal{C}^{\overline{(h,j;a+b)}}_{V_{c_{k}}(\mf{ns}_{2})} \\
}\]
\[\xymatrix@!C=84pt{
\mathcal{C}^{\overline{(h,j;0)}}_{V_{c_{k}}(\mf{ns}_{2})} \ar[d]_{\Delta^{a}_{\mf{ns}_{2}}} 
 \ar[r]^{\Omega^{-}_{j}}
 &
\mathcal{C}^{\overline{(h,j;0)}}_{V_{k}(\mf{sl}_{2})} \ar[d]^{\Delta^{a+b}_{\mf{sl}_{2}}} \\
\mathcal{C}^{\overline{(h,j;a)}}_{V_{c_{k}}(\mf{ns}_{2})}
 \ar[r]^-{\Omega^{-}_{j+\frac{k}{2}a+\frac{k+2}{2}b}} &
\ \mathcal{C}^{\overline{(h,j;a+b)}}_{V_{k}(\mf{sl}_{2})} \\
}\]
commute up to natural equivalence.
\end{cor}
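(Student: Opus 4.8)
The plan is to deduce Corollary~\ref{sfeq} from the branching rules established in Proposition~\ref{5.1} and its corollary, combined with the spectral-flow intertwining identities for the embeddings $\mathscr{F}_M$ and $\mathscr{G}_N$ recorded in the proof of Proposition~\ref{key2}. I would treat only the first square in detail; the second is entirely parallel. The key observation is that the two routes around the square both extract, from $\iota_+^*(M^{\theta}\otimes V^+)$ (for suitable $\theta$), the $\mathcal{F}^+$-isotypic component attached to a fixed Heisenberg charge, and that these two extractions differ only by an inner twist by an element of the Heisenberg Lie algebra together with a relabeling isomorphism $\xi^+_r$ of $V^+$.

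The first step is to unwind the two composite functors. Starting from $M\in\mathcal{LW}^{\overline{(h,j;0)}}_{V_{\mf{sl}_2}}$: going right then down gives $\Delta^{a+b}_{\mf{ns}_2}\circ\Omega^+_j(M)=\Omega^+_j(M)^{a+b}$, while going down then right gives $\Omega^+_{j+\frac{k}{2}a-b}(M^a)$. By the Corollary to Proposition~\ref{5.1}(1), applied with $M^a$ in place of $M$ and $\theta$ replaced appropriately, $\Omega^+_j(M)^{a+b}$ is identified with the summand of $\iota_+^*(M^{a+b}\otimes V^+)$ of a fixed Heisenberg weight; the second step is to show, using $\Delta^{h(0,r)}$ and the isomorphism $\xi^+_r$ exactly as in the proof of Proposition~\ref{5.1}(1), that $\Omega^+_{j-n}(M)^r\cong\Omega^+_{j-n-r}(M)$ naturally in $M$, and then to iterate/combine this with the spectral-flow shift on the $\widehat{\mf{sl}}_2$ side coming from $\Delta^{h'(\theta,n,m)}$ in Proposition~\ref{key2}(1). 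The arithmetic of the charge labels $\mathcal{F}^+_{(n,m)}=\mathcal{F}^+_{\kappa(j+\frac{k}{2}n-m)}$ shows that the shift $M\mapsto M^a$ (which changes $j$ to $j+\frac{k}{2}a$, by Lemma~\ref{twchar}) combined with the index bookkeeping for $b$ produces exactly $\Omega^+_{j+\frac{k}{2}a-b}$, matching the bottom arrow. The third step is naturality: each identification used is either $\Delta$ applied to a natural isomorphism (the $f_M$ of Lemma~\ref{vacc}, which is natural by construction $f\otimes v\mapsto f(v)$), or the canonical relabeling $\xi^+_r$, or restriction to an $\mathfrak{h}$-weight subspace — all manifestly natural in $M$ — so the square commutes up to natural equivalence, not merely objectwise.

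I would phrase the argument so that it reduces, for both squares, to the single bookkeeping lemma: applying $\Delta^{h(0,r)}$ (resp.\ $\Delta^{h'(0,r)}$) to the isomorphism of Lemma~\ref{vacc} and restricting to a Heisenberg lowest-weight line yields a natural isomorphism $\Omega^+_{j-n}(-)^r\cong\Omega^+_{j-n-r}(-)$ (resp.\ $\Omega^-_{j+\frac{k+2}{2}m}(-)^r\cong\Omega^-_{j+\frac{k+2}{2}(m+r)}(-)$), which is precisely what was proved inside Proposition~\ref{5.1}; the commuting square is then just the compatibility of this with the extra spectral-flow shift $\Delta^{h(\theta,0)}$/$\Delta^{h'(\theta,0)}$ and with the parameter identities $j\rightsquigarrow j+\frac{k}{2}a$ under $\Delta^a_{\mf{sl}_2}$ and $j\rightsquigarrow j-a$ under $\Delta^a_{\mf{ns}_2}$ read off from the block-relabeling conventions $\mathcal{LW}^{\overline{(h,j;\theta)}}$.

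The main obstacle I anticipate is purely combinatorial rather than conceptual: keeping the numerous shift parameters consistent. One must check that the Heisenberg-charge labels on $\mathcal{F}^{\pm}$ produced by $\Delta^{h(\theta,r)}$ and $\xi^{\pm}_r$ line up with the indices $\mathcal{F}^{\pm}_{(n,m)}$ in the branching rules, and that the composite shift on the target category is $\Delta^{a+b}$ and not, say, $\Delta^{a+2b}$ or $\Delta^{b}$ — this is exactly the role of the two separate integers $a$ and $b$ in the statement, $a$ being the ``genuine'' spectral flow transported by the functor and $b$ the ``internal'' relabeling that the coset sees. I would verify this by tracking a single weight vector $v\otimes\ket{0}^+$ through both composites and comparing its image under the two embeddings built from $\mathscr{F}_M$, using the intertwining formulas $\widetilde{H}_n e^{\mathcal{H}}(v_a\otimes e(a))=e^{\mathcal{H}}(H_n v_a\otimes e(a))$ etc.\ from the proof of Lemma~\ref{key}; everything downstream is then forced. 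No genuinely new input beyond Proposition~\ref{5.1}, its corollary, and Lemma~\ref{twchar} is needed.
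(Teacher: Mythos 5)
Your proposal is correct and matches the paper's implicit approach: the paper gives no explicit proof of Corollary~\ref{sfeq}, but the argument is exactly the one you outline, namely comparing the two decompositions of $\iota_{+}^{*}(M^{a}\otimes V^{+})$ obtained from the preceding corollary (with $\theta=a$) and from Proposition~\ref{5.1}(1) applied to $M^{a}$ (with shifted parameter $\tilde{j}=j+\tfrac{k}{2}a$), then peeling off the $\mathcal{F}^{+}_{(a,b)}$-isotypic component and using $\Omega^{+}_{j'}(-)^{r}\cong\Omega^{+}_{j'-r}(-)$ from the proof of Proposition~\ref{5.1}. The only slip is attributing $\Delta^{h'(\theta,n,m)}$ to Proposition~\ref{key2}(1) — the unprimed $h(\theta,n,m)$ appears there, and in any case the Section~6 two-argument $h(\theta,r),\,h'(\theta,r)$ (not the Section~5.3 three-argument versions) are what enter the Corollary to Proposition~\ref{5.1}; this does not affect the substance of your argument.
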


\begin{ex}
Setting $a=0$ in the former diagram, we obtain
 $\Omega_{j-b}^{+}\simeq
\Delta^{b}_{\mf{ns}_{2}}\circ\Omega^{+}_{j}$
as a functor from $\mathcal{C}^{\overline{(h,j)}}_{V_{k}(\mf{sl}_{2})}$
to $\mathcal{C}_{V_{c_{k}}(\mf{ns}_{2})}^{\overline{(h,j;b)}}$.
\end{ex}

\section{Applications}

\subsection{Examples of corresponding pairs}

In this subsection we show that relaxed highest weight 
$\widehat{\mf{sl}}_{2}$-modules of level $k$
exactly correspond to highest weight $\mf{ns}_{2}$-modules of
central charge $c_{k}$.
To simplify notation, we write
$\mathcal{M}_{(h,j,k)}$ and $\mathcal{L}_{(h,j,k)}$ for 
$\mathcal{M}_{h-\frac{j^{2}}{k+2},\frac{2j}{k+2},c_{k}}$
and $\mathcal{L}_{h-\frac{j^{2}}{k+2},\frac{2j}{k+2},c_{k}}$,
respectively.

\begin{prp}\label{cpair}
We have the following isomorphisms:

(1) $\Omega^{+}_{j}(R_{h,j,k})\simeq
\mathcal{M}_{(h,j,k)}$,

(2) $\Omega^{+}_{j}(M_{j,k})\simeq\mathcal{M}^{+}_{\frac{2j}{k+2},c_{k}}$,

(3) $\Omega^{+}_{0}\bigl(V_{k}(\mf{sl}_{2})\bigr)\simeq V_{c_{k}}(\mf{ns}_{2})$,

(4) $\Omega^{+}_{j}(L_{h,j,k})\simeq
\mathcal{L}_{(h,j,k)}$.
\end{prp}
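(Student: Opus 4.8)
The plan is to deduce statement (4) from statement (1) together with the categorical equivalence of Theorem \ref{main}, which makes the passage to simple quotients essentially formal. Concretely, recall that $L_{h,j,k}$ is by construction the unique irreducible quotient of the relaxed Verma module $R_{h,j,k}$, and likewise $\mathcal{L}_{(h,j,k)}$ is the irreducible quotient of $\mathcal{M}_{(h,j,k)}$. By Lemma \ref{twvacc} and the definition of the functors, $\Omega^{+}_{j}|$ restricted to the block $\mathcal{LW}_{V_{k}(\mf{sl}_{2})}^{\overline{(h,j)}}$ is an equivalence of $\mathbb{C}$-linear abelian categories onto $\mathcal{LW}_{V_{c_{k}}(\mf{ns}_{2})}^{\overline{(h-\frac{j^{2}}{k+2},\frac{2j}{k+2})}}$; in particular it is exact and preserves the lattice of subobjects of any given object. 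First I would check that $R_{h,j,k}$ and its unique simple quotient $L_{h,j,k}$ both lie in the block $\mathcal{LW}_{V_{k}(\mf{sl}_{2})}^{\overline{(h,j)}}$ — this was already observed in Section \ref{GW} for $R_{h,j,k}$, and $L_{h,j,k}$ inherits it as a quotient.

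Next I would use exactness of $\Omega^{+}_{j}|$ applied to the short exact sequence
\begin{equation*}
0 \longrightarrow K \longrightarrow R_{h,j,k} \longrightarrow L_{h,j,k} \longrightarrow 0,
\end{equation*}
where $K$ is the maximal proper submodule, to obtain a short exact sequence
\begin{equation*}
0 \longrightarrow \Omega^{+}_{j}(K) \longrightarrow \Omega^{+}_{j}(R_{h,j,k}) \longrightarrow \Omega^{+}_{j}(L_{h,j,k}) \longrightarrow 0.
\end{equation*}
By statement (1) we may identify $\Omega^{+}_{j}(R_{h,j,k})$ with $\mathcal{M}_{(h,j,k)}$. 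Since $\Omega^{+}_{j}|$ is an equivalence, it carries the maximal proper submodule $K \subsetneq R_{h,j,k}$ to the maximal proper submodule of $\mathcal{M}_{(h,j,k)}$ (an equivalence is an isomorphism on subobject lattices and sends simple objects to simple objects, hence sends the radical to the radical). Therefore $\Omega^{+}_{j}(L_{h,j,k})$ is the corresponding simple quotient, namely $\mathcal{L}_{(h,j,k)}$, and the natural surjection $\Omega^{+}_{j}(R_{h,j,k})\twoheadrightarrow\Omega^{+}_{j}(L_{h,j,k})$ is identified with $\mathcal{M}_{(h,j,k)}\twoheadrightarrow\mathcal{L}_{(h,j,k)}$. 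This gives the desired isomorphism $\Omega^{+}_{j}(L_{h,j,k})\cong\mathcal{L}_{(h,j,k)}$.

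The one point requiring a little care — and the step I expect to be the mild obstacle — is ensuring that $L_{h,j,k}$ genuinely sits inside the block on which $\Omega^{+}_{j}|$ is known to be an equivalence, and that "unique maximal proper submodule" on the $\widehat{\mf{sl}}_{2}$ side really does correspond, under the equivalence, to "unique maximal proper submodule" on the $\mf{ns}_{2}$ side. Both follow from abstract nonsense once one knows $\Omega^{+}_{j}|$ is an equivalence of abelian categories: an equivalence sends the (unique) maximal proper subobject of $R_{h,j,k}$ to the (unique) maximal proper subobject of $\mathcal{M}_{(h,j,k)}$, because maximality is expressible purely in terms of the subobject poset. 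The only genuinely non-formal ingredient is statement (1), which is proved just above, so (4) is a clean corollary. Alternatively, and perhaps more transparently, one can argue directly: $\Omega^{+}_{j}(L_{h,j,k})$ is a nonzero quotient of $\Omega^{+}_{j}(R_{h,j,k})\cong\mathcal{M}_{(h,j,k)}$, it is simple because $L_{h,j,k}$ is simple and $\Omega^{+}_{j}|$ is an equivalence, and $\mathcal{M}_{(h,j,k)}$ has a unique simple quotient $\mathcal{L}_{(h,j,k)}$; hence $\Omega^{+}_{j}(L_{h,j,k})\cong\mathcal{L}_{(h,j,k)}$. I would present this second formulation since it avoids invoking the subobject-lattice statement explicitly.
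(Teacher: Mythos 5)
Your argument for part (4) is correct and coincides with the paper's: once (1) and the categorical equivalence of Theorem \ref{main} are in hand, $\Omega^{+}_{j}(L_{h,j,k})$ is the unique simple quotient of $\Omega^{+}_{j}(R_{h,j,k})\cong\mathcal{M}_{(h,j,k)}$, namely $\mathcal{L}_{(h,j,k)}$. You spell out the subobject-lattice reasoning more explicitly than the paper, which dispatches (4) in a single line; your alternative ``more transparent'' formulation at the end is exactly the paper's.

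The gap is that the proposal does not prove parts (1), (2), (3), which are the substantive content of the proposition, and your argument explicitly takes (1) as given. In the paper, (1) is where the real work lives: for each $\theta$ one constructs a $V_{c_{k}}(\mf{ns}_{2})\otimes\mathcal{F}^{+}$-module map $f(\theta)\colon\bigoplus_{n}\mathcal{M}^{\theta+n}_{(h,j,k)}\otimes\mathcal{F}^{+}_{(\theta,n)}\to\iota_{+}^{*}(R^{\theta}_{h,j,k}\otimes V^{+})$ and a $V_{k}(\mf{sl}_{2})\otimes\mathcal{F}^{-}$-module map $g(\theta)$ in the opposite direction, both specified on highest weight vectors and pinned down by checking the annihilation relations; then, using that $e^{\mathcal{H}}$ and $e^{\mathcal{J}}$ fix the relevant tensor products of highest weight vectors, one verifies that the two ways of composing the $f(\cdot)$'s and $g(\cdot)$'s recover the isomorphisms (\ref{TWaff}) and (\ref{TWsca}) from Proposition \ref{key2}, which forces $f(\theta)$ and $g(\theta)$ to be bijective and gives (1). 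Parts (2) and (3) follow from the same construction with $M_{j,k}$ and $V_{k}(\mf{sl}_{2})$ in place of $R_{h,j,k}$, and the chiral Verma module and $V_{c_{k}}(\mf{ns}_{2})$ in place of $\mathcal{M}_{(h,j,k)}$. As written, the proposal is a correct proof of (4) conditional on (1), but not a proof of the proposition.
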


\begin{proof}
Let $\theta\in\mathbb{Z}$. Denote by
$\ket{h,j,k;\,\theta}$ and
$\ket{(h,j,k);\,\theta}^{\mf{ns}_{2}}$
the canonical generators of
$R_{h,j,k}^{\theta}$
and $\mathcal{M}^{\theta}_{(h,j,k)}$, respectively.
By the annihilation relations,
the $V_{c_{k}}(\mf{ns}_{2})\otimes \mathcal{F}^{+}$-module
homomorphism 
\begin{equation*}
f(\theta)\colon\bigoplus_{n\in\mathbb{Z}}
\mathcal{M}^{\theta+n}_{(h,j,k)}
\otimes\mathcal{F}_{(\theta,n)}^{+}
\rightarrow
\iota_{+}^{*}(R_{h,j,k}^{\theta}\otimes V^{+})
\end{equation*}
is uniquely determined by sending
$$\textstyle
\ket{(h,j,k);\,\theta+n}^{\mf{ns}_{2}}
\otimes\ket{(\theta,n)}^{+}
\mapsto
\ket{h,j,k;\,\theta}\otimes e^{n\alpha^{+}}$$
for $n\in\mathbb{Z}$.
In the same way, the $V_{k}(\mf{sl}_{2})\otimes\mathcal{F}^{-}$-module
homomorphism
\begin{equation*}
g(\theta)\colon 
\bigoplus_{m\in\mathbb{Z}}
R_{h,j,k}^{\theta+m}
\otimes\mathcal{F}_{(\theta,m)}^{-}
\rightarrow
\iota_{-}^{*}(\mathcal{M}_{(h,j,k)}^{\theta}
\otimes V^{-}).
\end{equation*}
is uniquely determined by sending
$$\textstyle
\ket{h,j,k;\,\theta+m}
\otimes\ket{(\theta,m)}^{-}
\mapsto
\ket{(h,j,k);\,\theta}^{\mf{ns}_{2}}\otimes e^{m\alpha^{-}}$$
for $m\in\mathbb{Z}$.
Since we have
$$e^{\mathcal{H}}\bigl(\ket{h,j,k;\theta}
\otimes e^{n\alpha^{+}}\otimes e^{m\alpha^{-}}\bigr)
=\ket{h,j,k;\theta}\otimes e^{n\alpha^{+}}\otimes e^{m\alpha^{-}}$$
for any $n,m\in\mathbb{Z}$,
the $V_{k}(\mf{sl}_{2})
\otimes \mathcal{F}^{+}\otimes\mathcal{F}^{-}$-module homomorphism
\begin{flalign*}
&\bigoplus_{n,m\in\mathbb{Z}}
R_{h,j,k}^{\theta+n+m}
\otimes\mathcal{F}_{(\theta,n)}^{+}
\otimes\mathcal{F}_{(\theta+n,m)}^{-}\\
\xrightarrow{\bigoplus g(\theta+n)}\ \ 
&(\iota_{-})_{13}^{*}\left(
\bigoplus_{n\in\mathbb{Z}}
\mathcal{M}^{\theta+n}_{(h,j,k)}
\otimes\mathcal{F}_{(\theta,n)}^{+}\otimes V^{-}
\right) \\
\xrightarrow{\hspace{4mm}f(\theta)\hspace{3.5mm}}\ \ 
&(\iota_{-})_{13}^{*}\circ(\iota_{+})_{12}^{*}\bigl(
R_{h,j,k}^{\theta}
\otimes V^{+}\otimes V^{-}\bigr)
\end{flalign*}
coincides with the $V_{k}(\mf{sl}_{2})
\otimes \mathcal{F}^{+}\otimes\mathcal{F}^{-}$-module
isomorphism (\ref{TWaff}).
Therefore $g(\theta)$ is injective and $f(\theta)$ is surjective
for any $\theta\in\mathbb{Z}$.
Similarly, since
$$e^{\mathcal{J}}\bigl(\ket{(h,j,k);\theta}^{\mf{ns}_{2}}
\otimes e^{n\alpha^{+}}\otimes e^{m\alpha^{-}}\bigr)
=\ket{(h,j,k);\theta}^{\mf{ns}_{2}}\otimes e^{n\alpha^{+}}\otimes e^{m\alpha^{-}},$$
the $V_{c_{k}}(\mf{ns}_{2})
\otimes \mathcal{F}^{+}\otimes\mathcal{F}^{-}$-module homomorphism
\begin{flalign*}
&\bigoplus_{n,m\in\mathbb{Z}}
\mathcal{M}_{(h,j,k)}^{\theta+n+m}
\otimes\mathcal{F}_{(\theta+m,n)}^{+}
\otimes\mathcal{F}_{(\theta,m)}^{-}\\
\xrightarrow{\bigoplus f(\theta+m)}\ \ 
&(\iota_{+})_{12}^{*}\left(
\bigoplus_{m\in\mathbb{Z}}
R^{\theta+m}_{(h,j,k)}
\otimes V^{+}\otimes\mathcal{F}_{(\theta,m)}^{-}
\right) \\
\xrightarrow{\hspace{4mm}g(\theta)\hspace{4.5mm}}\ \ 
&(\iota_{+})_{12}^{*}\circ(\iota_{-})_{13}^{*}\bigl(
\mathcal{M}_{(h,j,k)}^{\theta}
\otimes V^{+}\otimes V^{-}\bigr)
\end{flalign*}
coincides with the $V_{c_{k}}(\mf{ns}_{2})
\otimes \mathcal{F}^{+}\otimes\mathcal{F}^{-}$-module
isomorphism (\ref{TWsca}).
Therefore $g(\theta)$ is surjective and $f(\theta)$ is injective.
This completes the proof of (1).
Since (2) and (3) are proved in the same way,
we omit the proof.

Finally, as the functor $\Omega^{\pm}_{j}$ gives a categorical equivalence,
the module $\Omega^{+}_{j}(L_{h,j,k})$
is isomorphic to the simple quotient of 
$\Omega^{+}_{j}(R_{h,j,k})\simeq\mathcal{M}_{(h,j,k)}$.
\end{proof}

\begin{rem}
It is known that the VOA $V_{k}(\mf{sl}_{2})$
has a non-trivial ideal if and only if $k$ is an admissible level
for $\widehat{\mf{sl}}_{2}$, {\it i.e.} there exists 
a pair of coprime integers $(p,p')\in\mathbb{Z}_{\geq2}\times\mathbb{Z}_{\geq1}$
such that $k=-2+\frac{p}{\ p'}$ (see \cite[Theorem 0.2.1]{gorelik2007simplicity}).
It follows from Proposition \ref{cpair} that
the VOSA $V_{c}(\mf{ns}_{2})$ has a non-trivial ideal
if and only if there exists an admissble level $k$ for $\widehat{\mf{sl}}_{2}$
such that $c=c_{k}=3(1-\frac{2p'}{p})$.
This is already proved by
M. Gorelik and V. Kac in \cite[Corollary 9.1.5 (ii)]{gorelik2007simplicity}
and we gave another proof.
\end{rem}

By definition of the functors $\Omega^{\pm}_{0}$,
we determine the full commutants as follows:

\begin{cor}\label{CST}
For any $k\in\mathbb{C}\setminus\{-2\}$, we have
\begin{enumerate}
\item ${\sf Com}\bigl(\iota_{+}( \mathcal{F}^{+}),V_{\mf{sl}_{2}}\otimes V^{+}\bigr)\simeq
V_{\mf{ns}_{2}},$
\item ${\sf Com}\bigl(\iota_{-}(\mathcal{F}^{-}),
V_{\mf{ns}_{2}}\otimes V^{-}\bigr)\simeq V_{\mf{sl}_{2}}$.
\end{enumerate}
\end{cor}

\begin{rem}
Since the simplicity of 
${\sf Com}\bigl(\iota_{+}( \mathcal{F}^{+}),L_{k}(\mf{sl}_{2})\otimes V^{+}\bigr)$
and ${\sf Com}\bigl(\iota_{-}(\mathcal{F}^{-}),$
$L_{c_{k}}(\mf{ns}_{2})\otimes V^{-}\bigr)$ follows from a super analog of
\cite[Proposition 3.2]{creutzig2016schur} (see also \cite[Lemma 2.1]{arakawa2017orbifolds}),
the former isomorphism is already obtained by T.\,Creutzig and A.\,Linshaw as a corollary of \cite[Lemma 8.6]{CREUTZIG2019396}.
In particular, the assumption on $k$ in \cite[Lemma 8.7]{CREUTZIG2019396} is not needed.
\end{rem}

\begin{cor}
Assume that $k\neq0$.
Then $\{\mathcal{L}_{h,j,c_{k}}\,|\, h,j\in\mathbb{C}\,\}$
provides the complete representatives of
the equivalence classes of simple objects in $\mathcal{C}_{V_{c_{k}}(\mf{ns}_{2})}$.
\end{cor}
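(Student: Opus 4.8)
The plan is to deduce the statement from the categorical equivalence of Theorem~\ref{main}, the classification of simple objects over $\widehat{\mf{sl}}_{2}$ in Fact~\ref{simple}, and Proposition~\ref{cpair}, the one extra ingredient being that the family $\{\mathcal{L}_{h,j,c_{k}}\}$ is stable under spectral flow.

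First I would record that each $\mathcal{L}_{h,j,c_{k}}$ really is a simple object: the Verma module $\mathcal{M}_{h,j,c_{k}}$ lies in the block $\mathcal{LW}_{V_{c_{k}}(\mf{ns}_{2})}^{\overline{(h,j)}}$ (see Section~3), so its quotient $\mathcal{L}_{h,j,c_{k}}$ by the maximal proper submodule is again an object of $\mathcal{LW}_{V_{c_{k}}(\mf{ns}_{2})}$, and it is irreducible already as an $\mf{ns}_{2}$-module, hence as a $V_{c_{k}}(\mf{ns}_{2})$-module. These objects are moreover pairwise non-isomorphic: in $\mathcal{L}_{h,j,c_{k}}$ the subspace of minimal $L_{0}$-eigenvalue is one-dimensional, spanned by the image of $\ket{h,j,c_{k}}^{\mf{ns}_{2}}$, on which $L_{0}=\omega^{\mf{ns}_{2}}_{(1)}$ acts by $h$ and $J_{0}$ by $j$; a $V_{c_{k}}(\mf{ns}_{2})$-module isomorphism commutes with $L_{0}$ and $J_{0}$, so it identifies these lines and forces $(h,j)=(h',j')$. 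It therefore remains to show that every simple object of $\mathcal{LW}_{V_{c_{k}}(\mf{ns}_{2})}$ is isomorphic to some $\mathcal{L}_{h,j,c_{k}}$.

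Let $L$ be a simple object. Since $\mathcal{LW}_{V_{c_{k}}(\mf{ns}_{2})}$ is the direct sum of its blocks and $L$ is indecomposable, $L$ lies in a single block, which (using $k+2\neq0$) may be written as $\mathcal{LW}_{V_{c_{k}}(\mf{ns}_{2})}^{\overline{(h-\frac{j^{2}}{k+2},\frac{2j}{k+2})}}$ for suitable $h,j\in\mathbb{C}$. By Theorem~\ref{main} the module $M:=\Omega^{-}_{j}(L)$ is a simple object of $\mathcal{LW}_{V_{k}(\mf{sl}_{2})}^{\overline{(h,j)}}$, hence of $\mathcal{LW}_{V_{k}(\mf{sl}_{2})}$ (a subobject of a block-object stays in the block), and $L\cong\Omega^{+}_{j}(M)$. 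By Fact~\ref{simple}, where $k\neq0$ is used, we have $M\cong L_{h_{0},j_{0},k}^{\theta}=\Delta^{\theta}_{\mf{sl}_{2}}(L_{h_{0},j_{0},k})$ for some $h_{0},j_{0}\in\mathbb{C}$ and $\theta\in\mathbb{Z}$. Transporting this spectral flow through $\Omega^{+}_{j}$ by Corollary~\ref{sfeq} turns it into a spectral flow on the $\mf{ns}_{2}$ side and changes the subscript of $\Omega^{+}$ from $j$ to $j_{0}$; combining this with $\Omega^{+}_{j_{0}}(L_{h_{0},j_{0},k})\cong\mathcal{L}_{(h_{0},j_{0},k)}$ (Proposition~\ref{cpair}) yields $L\cong\bigl(\mathcal{L}_{(h_{0},j_{0},k)}\bigr)^{s}$ for an integer $s$ determined by the index bookkeeping in Corollary~\ref{sfeq}.

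The remaining point, and the heart of the matter, is that a spectral flow twist of any $\mathcal{L}_{h,j,c_{k}}$ is again of the form $\mathcal{L}_{h',j',c_{k}}$; this is a genuine asymmetry with the $\widehat{\mf{sl}}_{2}$ side, where the relaxed family is not spectral-flow-stable. I would prove it by a direct computation with the automorphism $\Delta^{\pm1}$ of $\mf{ns}_{2}$: one checks that $G^{\mp}_{-1/2}\ket{h,j,c_{k}}^{\mf{ns}_{2}}$ is annihilated by the image of $(\mf{ns}_{2})_{+}$ and generates $(\mathcal{M}_{h,j,c_{k}})^{\pm1}$, so that the latter is a quotient of the Verma module $\mathcal{M}_{h',j',c_{k}}$ with $(h',j')$ read off from the $(L_{0},J_{0})$-eigenvalues of that vector; a comparison of formal characters, which one computes to agree via Lemma~\ref{twchar}, forces the quotient map to be an isomorphism. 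As spectral flow is an equivalence it commutes with passing to the maximal submodule, whence $(\mathcal{L}_{h,j,c_{k}})^{\pm1}\cong\mathcal{L}_{h',j',c_{k}}$, and iterating covers all $\theta\in\mathbb{Z}$. Combined with the previous paragraph this shows $L\cong\mathcal{L}_{h_{1},j_{1},c_{k}}$ for suitable $h_{1},j_{1}$, completing the proof. The hardest step is this spectral-flow-stability of $\{\mathcal{L}_{h,j,c_{k}}\}$; the transport of spectral flow along $\Omega^{\pm}_{j}$ and the matching of block labels and $\Omega^{\pm}$-subscripts are routine but require care.
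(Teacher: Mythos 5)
Your strategy has the same overall skeleton as the paper's proof (block decomposition, Theorem \ref{main} and Fact \ref{simple} to land on a module of the form $\mathcal{L}_{h',j',c_k}^{\theta}$, then remove the spectral flow twist), but the way you handle the final step is genuinely different, and it has a gap.

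The paper's final step is brief but conceptually direct: it observes that the $L_0$-spectrum of $\mathcal{L}_{h',j',c_k}^{\theta}$ is globally bounded below. This holds because in the simple highest weight module $\mathcal{L}_{h',j',c_k}$ the $L_0$-eigenvalues at $J_0$-weight $j'+n$ grow at least like $n^2/2$, which dominates the linear $\theta\cdot J_0$ shift introduced by $\Delta^{\theta}_{\mf{ns}_2}$. A simple weight-wise admissible module with globally lower-bounded $L_0$-spectrum has a lowest-$L_0$ joint $(L_0,J_0)$-eigenvector, which is automatically annihilated by $(\mf{ns}_2)_{+}$, and by simplicity this exhibits the module as some $\mathcal{L}_{h,j,c_k}$.

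Your route instead tries to prove spectral-flow stability at the level of Verma modules, asserting that $G^{\mp}_{-1/2}\ket{h,j,c_k}^{\mf{ns}_2}$ "generates $(\mathcal{M}_{h,j,c_k})^{\pm1}$," from which a character comparison is supposed to force $(\mathcal{M}_{h,j,c_k})^{\pm1}\cong\mathcal{M}_{h',j',c_k}$. This fails exactly in the chiral and anti-chiral cases. If $h=-j/2$, then $G^{+}_{1/2}G^{-}_{-1/2}\ket{h,j,c_k}^{\mf{ns}_2}=(2h+j)\ket{h,j,c_k}^{\mf{ns}_2}=0$, so $G^{-}_{-1/2}\ket{h,j,c_k}^{\mf{ns}_2}$ is a singular vector of the untwisted Verma module and generates only a proper submodule; symmetrically $G^{+}_{-1/2}\ket{h,j,c_k}^{\mf{ns}_2}$ fails to generate when $h=j/2$. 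In those cases $(\mathcal{M}_{h,j,c_k})^{\pm1}$ is not a Verma module at all (a character identity between a non-cyclic submodule and the full Verma is no contradiction, since the induced map is neither injective nor surjective). Hence the claimed isomorphism $(\mathcal{M}_{h,j,c_k})^{\pm1}\cong\mathcal{M}_{h',j',c_k}$ is simply false there, the inheritance to maximal submodules collapses, and since iterating by $\pm1$ can pass through chiral/anti-chiral weights, the gap contaminates the whole induction on $\theta$. The fix is to abandon the Verma detour and argue directly on the simple objects: verify (via the quadratic growth just described or via Lemma \ref{twchar}) that the $L_0$-spectrum of $\mathcal{L}_{h,j,c_k}^{\theta}$ is bounded below and then read off a highest weight vector, which is exactly the argument the paper gives. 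The bookkeeping in the first part of your proposal, and the pairwise non-isomorphism of the $\mathcal{L}_{h,j,c_k}$, are fine as written.
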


\begin{proof}
Let $L$ be a simple object 
in $\mathcal{C}_{V_{c_{k}}(\mf{ns}_{2})}$.
By Fact \ref{simple}, Corollary \ref{sfeq} and Proposition \ref{cpair} (4),
there exist $h',j'\in\mathbb{C}$ and $\theta\in\mathbb{Z}$ such that
$L\simeq\mathcal{L}_{h',j',c_{k}}^{\theta}$.
Since the set of eigenvalues with respect to $L_{0}$ on 
$\mathcal{L}_{h',j',c_{k}}^{\theta}$ is lower bounded,
there exist unique $h,j\in\mathbb{C}$ such that
$\mathcal{L}_{h',j',c_{k}}^{\theta}\simeq\mathcal{L}_{h,j,c_{k}}$.
\end{proof}

\subsection{Application 1:\,Equivalence in the simple case}

As a corollary of Proposition \ref{cpair} (4), 
we get another proof of the following theorem
obtained by \cite[Theorem 4.1 and 5.1]{Ad99}.

\begin{theom}
The homomorphisms $\iota_{+}$ and $\iota_{-}$ in Theorem
\ref{coset1}
factor through the corresponding simple quotients.
\end{theom}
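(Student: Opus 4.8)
The plan is to deduce the statement directly from Proposition \ref{cpair}(4) together with Corollary \ref{CST}. Recall that $\iota_{+}\colon V_{c_{k}}(\mf{ns}_{2})\otimes\mathcal{F}^{+}\to V_{k}(\mf{sl}_{2})\otimes V^{+}$ is a homomorphism of conformal vertex superalgebras; I want to show that it kills the maximal ideal of $V_{c_{k}}(\mf{ns}_{2})\otimes\mathcal{F}^{+}$, and symmetrically for $\iota_{-}$. The key observation is that $\mathcal{F}^{+}$ is simple (it is the Heisenberg VOA $M(1)$), so the maximal ideal of the tensor product is $I_{\mf{ns}_{2}}\otimes\mathcal{F}^{+}$, where $I_{\mf{ns}_{2}}$ is the maximal ideal of $V_{c_{k}}(\mf{ns}_{2})$, i.e. the kernel of $V_{c_{k}}(\mf{ns}_{2})\twoheadrightarrow L_{c_{k}}(\mf{ns}_{2})$.

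First I would treat $\iota_{+}$. Consider the restriction of $\iota_{+}$ to the adjoint module, i.e. apply the functor $\Omega^{+}_{0}$ with $V_{\mf{sl}_{2}}=L_{k}(\mf{sl}_{2})$. By Corollary \ref{CST} (equivalently, the simple-quotient case of Proposition \ref{cpair}(3)), the commutant $C\bigl(L_{k}(\mf{sl}_{2})\otimes V^{+},\iota_{+}(\mathcal{F}^{+})\bigr)$ is isomorphic to $L_{c_{k}}(\mf{ns}_{2})$. On the other hand, by Fact \ref{coset1}(1) and the vertex superalgebra homomorphism $C(V,W)\otimes W\to V$ recalled in the introduction, $\iota_{+}$ composed with the projection $V_{k}(\mf{sl}_{2})\to L_{k}(\mf{sl}_{2})$ lands inside $C\bigl(L_{k}(\mf{sl}_{2})\otimes V^{+},\iota_{+}(\mathcal{F}^{+})\bigr)\otimes \iota_{+}(\mathcal{F}^{+})\cong L_{c_{k}}(\mf{ns}_{2})\otimes\mathcal{F}^{+}$, and this composite agrees with the map induced by $\iota_{+}$ on simple quotients. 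Concretely: the generators $\mathbf{G}^{\pm},\mathbf{J},{\boldsymbol\alpha}^{+}$ of $V_{c_{k}}(\mf{ns}_{2})\otimes\mathcal{F}^{+}$ are sent by $\iota_{+}$ into $V_{k}(\mf{sl}_{2})\otimes V^{+}$, and their images only involve $\mathbf{E},\mathbf{F},\mathbf{H}$ and $\alpha^{+}_{-1}$; the $\mf{ns}_{2}$-submodule of $V_{k}(\mf{sl}_{2})\otimes V^{+}$ they generate is, by Proposition \ref{cpair}(3) applied at the vacuum, exactly the image of $\Omega^{+}_{0}(V_{k}(\mf{sl}_{2}))=V_{c_{k}}(\mf{ns}_{2})$ inside it — but after passing to the simple quotient $L_{k}(\mf{sl}_{2})$ this image becomes $\Omega^{+}_{0}(L_{k}(\mf{sl}_{2}))$, which by the equivalence of Theorem \ref{main} (in the simple block case, Theorem \ref{mainsimple} / Proposition \ref{cpair}(4) with $h=j=0$) is the simple quotient $L_{c_{k}}(\mf{ns}_{2})=\mathcal{L}_{0,0,c_{k}}$. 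Hence $I_{\mf{ns}_{2}}\otimes\mathcal{F}^{+}$ maps to zero, so $\iota_{+}$ factors through $L_{c_{k}}(\mf{ns}_{2})\otimes\mathcal{F}^{+}\to L_{k}(\mf{sl}_{2})\otimes V^{+}$.

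The argument for $\iota_{-}$ is entirely parallel, with the roles of $\widehat{\mf{sl}}_{2}$ and $\mf{ns}_{2}$ interchanged: the relevant input is Corollary \ref{CST}, namely $C\bigl(L_{c_{k}}(\mf{ns}_{2})\otimes V^{-},\iota_{-}(\mathcal{F}^{-})\bigr)\cong L_{k}(\mf{sl}_{2})$, together with Proposition \ref{cpair}(3)--(4) read on the $\mf{sl}_{2}$ side, so that $\iota_{-}$ sends the maximal ideal $I_{\mf{sl}_{2}}\otimes\mathcal{F}^{-}$ to zero and factors as $L_{k}(\mf{sl}_{2})\otimes\mathcal{F}^{-}\to L_{c_{k}}(\mf{ns}_{2})\otimes V^{-}$. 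I expect the main obstacle to be bookkeeping rather than conceptual: one must verify carefully that the $\mf{ns}_{2}$-module (resp.\ $\widehat{\mf{sl}}_{2}$-module) generated inside $V_{k}(\mf{sl}_{2})\otimes V^{+}$ by the explicit images in Fact \ref{coset1} is precisely $\Omega^{+}_{0}$ applied to the adjoint module — this is what Proposition \ref{cpair}(3) gives — and then that taking simple quotients on both the source affine VOA and the target $\mf{ns}_{2}$ VOSA is compatible, which is exactly the content of the categorical equivalence between the simple blocks. Once that identification is in place, the factorization through simple quotients is immediate from simplicity of $\mathcal{F}^{\pm}$ and the fact that a homomorphism of vertex superalgebras annihilates the maximal ideal as soon as its image is simple.
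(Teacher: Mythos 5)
Your proposal is correct and rests on the same key input as the paper's proof, namely Proposition \ref{cpair}(4) with $h=j=0$, which identifies $\Omega^{+}_{0}\bigl(L_{k}(\mf{sl}_{2})\bigr)\cong L_{c_{k}}(\mf{ns}_{2})$ and $\Omega^{-}_{0}\bigl(L_{c_{k}}(\mf{ns}_{2})\bigr)\cong L_{k}(\mf{sl}_{2})$. The paper then concludes by observing that the restriction of the decomposition isomorphisms (\ref{vac1})--(\ref{vac2}) (for $M=L_{k}(\mf{sl}_{2})$, resp.\ $N=L_{c_{k}}(\mf{ns}_{2})$, and $j=0$) to the $0$-th Fock component is exactly $\iota_{\pm}$ post-composed with the projection to the simple quotient; you instead route through Corollary \ref{CST} and a kernel/ideal argument (the image of $V_{c_{k}}(\mf{ns}_{2})\otimes\mathcal{F}^{+}$ is the simple $L_{c_{k}}(\mf{ns}_{2})\otimes\mathcal{F}^{+}$, so the kernel must be the unique maximal ideal $I_{\mf{ns}_{2}}\otimes\mathcal{F}^{+}$). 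This is a reformulation rather than a genuinely different argument, and it is sound — the only minor point worth making explicit is that the vacuum of $L_{k}(\mf{sl}_{2})\otimes V^{+}$ sits in the $n=0$ summand of (\ref{vac1}), so the $V_{c_{k}}(\mf{ns}_{2})\otimes\mathcal{F}^{+}$-submodule it generates is contained in $\Omega^{+}_{0}\bigl(L_{k}(\mf{sl}_{2})\bigr)\otimes\mathcal{F}^{+}$ and hence, by simplicity, equals it.
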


\begin{proof}
By Proposition \ref{cpair} (4), 
$\Omega^{+}_{0}\bigl(L_{k}(\mf{sl}_{2})\bigr)$
and $\Omega^{-}_{0}\bigl(L_{c_{k}}(\mf{ns}_{2})\bigr)$ are isomorphic
to $L_{c_{k}}(\mf{ns}_{2})$ and $L_{k}(\mf{sl}_{2})$, respectively.
Then the restrictions of the isomorphisms (\ref{vac1}) and (\ref{vac2})
to the $0$-th components are identified with 
the above mappings $\iota_{+}$ and $\iota_{-}$, respectively.
\end{proof}

In a similar way, we get the simple quotient version of Theorem \ref{main}.

\begin{theom}\label{mainsimple}
For any $k\in\mathbb{C}\setminus\{0,-2\}$,
the restrictions of
$\Omega^{+}_{j}$ and $\Omega^{-}_{j}$
in Theorem \ref{main}
also give the categorical equivalences
$$\mathcal{C}_{L_{k}(\mf{sl}_{2})}^{\overline{(h,j)}}
\overset{\simeq}{\longrightarrow}\mathcal{C}_{L_{c_{k}}(\mf{ns}_{2})}
^{\overline{(h-\frac{j^{2}}{k+2},\frac{2j}{k+2})}}\ \text{ and }\ 
\mathcal{C}_{L_{c_{k}}(\mf{ns}_{2})}^{\overline{(h-\frac{j^{2}}{k+2},\frac{2j}{k+2})}}
\overset{\simeq}{\longrightarrow}
\mathcal{C}_{L_{k}(\mf{sl}_{2})}^{\overline{(h,j)}},$$
respectively.
\end{theom}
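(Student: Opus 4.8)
The plan is to bootstrap Theorem~\ref{mainsimple} from Theorem~\ref{main}, using only the theorem proved just above it (that $\iota_{+}$ and $\iota_{-}$ factor through the corresponding simple quotients). Recall first that a weak $L_{k}(\mf{sl}_{2})$-module is precisely a weak $V_{k}(\mf{sl}_{2})$-module on which the maximal ideal $I_{k}\subset V_{k}(\mf{sl}_{2})$ acts by zero, so that (using $k\neq0$, which makes $L_{k}(\mf{sl}_{2})$ a VOA) the category $\mathcal{LW}_{L_{k}(\mf{sl}_{2})}$ is the full subcategory of $\mathcal{LW}_{V_{k}(\mf{sl}_{2})}$ consisting of such modules, and likewise $\mathcal{LW}_{L_{c_{k}}(\mf{ns}_{2})}\subset\mathcal{LW}_{V_{c_{k}}(\mf{ns}_{2})}$. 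Moreover the subgroups $Q_{S}$ of Definition~\ref{block} attached to $L_{k}(\mf{sl}_{2})$ and $L_{c_{k}}(\mf{ns}_{2})$ agree with those attached to $V_{k}(\mf{sl}_{2})$ and $V_{c_{k}}(\mf{ns}_{2})$, since the generating vectors $\mathbf{H},\mathbf{E},\mathbf{F}$ and $\mathbf{J},\mathbf{G}^{\pm}$ remain nonzero in the simple quotients when $k\neq0$; hence the block labels $\overline{(h,j)}$ and $\overline{(h-\frac{j^{2}}{k+2},\frac{2j}{k+2})}$ denote the same subsets of $\mathbb{C}\times\mf{h}^{*}$ as in Theorem~\ref{main}, and the simple blocks are full subcategories of the corresponding blocks there.

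The one substantive step is to check that $\Omega^{+}_{j}|$ and $\Omega^{-}_{j}|$ carry these full subcategories into each other. Let $M\in\Obj(\mathcal{LW}_{L_{k}(\mf{sl}_{2})}^{\overline{(h,j)}})$. Then $M\otimes V^{+}$ is a weak $L_{k}(\mf{sl}_{2})\otimes V^{+}$-module, so the weak $V_{k}(\mf{sl}_{2})\otimes V^{+}$-module structure used to define $\iota_{+}^{*}(M\otimes V^{+})$ factors through the projection $\pi\colon V_{k}(\mf{sl}_{2})\otimes V^{+}\twoheadrightarrow L_{k}(\mf{sl}_{2})\otimes V^{+}$, i.e. $\iota_{+}^{*}(M\otimes V^{+})=(\pi\circ\iota_{+})^{*}(M\otimes V^{+})$ as weak $V_{c_{k}}(\mf{ns}_{2})\otimes\mathcal{F}^{+}$-modules. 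By the previous theorem the composite $\pi\circ\iota_{+}$ kills the maximal ideal $I_{c_{k}}\otimes\mathcal{F}^{+}$ of $V_{c_{k}}(\mf{ns}_{2})\otimes\mathcal{F}^{+}$ and hence descends to $L_{c_{k}}(\mf{ns}_{2})\otimes\mathcal{F}^{+}$; therefore $\iota_{+}^{*}(M\otimes V^{+})$ is a weak $L_{c_{k}}(\mf{ns}_{2})\otimes\mathcal{F}^{+}$-module, whence $\Omega^{+}_{j}(M)=\Hom_{\mathcal{F}^{+}}(\mathcal{F}^{+}_{\kappa j},\iota_{+}^{*}(M\otimes V^{+}))$ is a weak $L_{c_{k}}(\mf{ns}_{2})$-module. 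Combined with Theorem~\ref{main}, which already gives $\Omega^{+}_{j}(M)\in\Obj(\mathcal{LW}_{V_{c_{k}}(\mf{ns}_{2})}^{\overline{(h-\frac{j^{2}}{k+2},\frac{2j}{k+2})}})$, we conclude $\Omega^{+}_{j}(M)\in\Obj(\mathcal{LW}_{L_{c_{k}}(\mf{ns}_{2})}^{\overline{(h-\frac{j^{2}}{k+2},\frac{2j}{k+2})}})$. The same argument, with $\iota_{-}$, $V^{-}$, and the projection $V_{c_{k}}(\mf{ns}_{2})\otimes V^{-}\twoheadrightarrow L_{c_{k}}(\mf{ns}_{2})\otimes V^{-}$, shows that $\Omega^{-}_{j}|$ carries $\mathcal{LW}_{L_{c_{k}}(\mf{ns}_{2})}^{\overline{(h-\frac{j^{2}}{k+2},\frac{2j}{k+2})}}$ into $\mathcal{LW}_{L_{k}(\mf{sl}_{2})}^{\overline{(h,j)}}$.

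Finally, one restricts the equivalence of Theorem~\ref{main}. By the previous paragraph, $\Omega^{+}_{j}|$ and $\Omega^{-}_{j}|$ restrict to a pair of functors between the two simple blocks. Since those blocks are full subcategories, for $M$ in the simple $\mf{sl}_{2}$-block the object $(\Omega^{-}_{j}|)\circ(\Omega^{+}_{j}|)(M)$ again lies there, and the isomorphism $\mathscr{F}(M)$ of Theorem~\ref{main} is a morphism between two of its objects, hence a morphism of the simple block; similarly for $\mathscr{G}(N)$. Thus $\mathscr{F}$ and $\mathscr{G}$ restrict to natural isomorphisms $\mathrm{Id}\cong(\Omega^{-}_{j}|)\circ(\Omega^{+}_{j}|)$ and $\mathrm{Id}\cong(\Omega^{+}_{j}|)\circ(\Omega^{-}_{j}|)$ of endofunctors of the simple blocks, giving the asserted categorical equivalences. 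I do not anticipate a genuine obstacle here: the only place where anything beyond Theorem~\ref{main} is used is the second paragraph, and the required input -- descent of $\iota_{\pm}$ to the simple quotients -- is already available from Proposition~\ref{cpair}; the rest is bookkeeping, the main subtlety being the identification $\iota_{+}^{*}(M\otimes V^{+})=(\pi\circ\iota_{+})^{*}(M\otimes V^{+})$ for $M$ an $L_{k}(\mf{sl}_{2})$-module and the (routine) verification that the $Q_{S}$-lattices are unchanged.
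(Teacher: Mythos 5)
Your proposal is correct and spells out exactly what the paper's terse ``In a similar way'' is pointing to: the descent of $\iota_{\pm}$ through the simple quotients (the unlabeled theorem immediately preceding Theorem~\ref{mainsimple}, itself derived from Proposition~\ref{cpair}(4)) is used to show that $\Omega^{\pm}_{j}$ send weak $L$-modules to weak $L$-modules, and the natural isomorphisms $\mathscr{F},\mathscr{G}$ of Theorem~\ref{main} restrict automatically because the simple blocks are full subcategories. A minor attribution slip at the end (crediting the descent directly to Proposition~\ref{cpair} rather than to that preceding theorem) does not affect the logic.
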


\subsection{Application 2:\,Character formulas}\label{characterformula}

\subsubsection{At general levels}

In this subsection we fix $h,j\in\mathbb{C}$.
For a $V_{k}(\mf{sl}_{2})$-module $M$ 
and a $V_{c_{k}}(\mf{ns}_{2})$-module $N$ satisfying (\ref{assump}),
we denote by $M_{n}(q)$ and $N_{n}(q)$ 
the elements of $\mathbb{C}(\!(q)\!)$ such that
$$\ch(M)(q,x)=q^{h}x^{j}\sum_{n\in\mathbb{Z}}M_{n}(q)x^{n},\ 
\ch(N)(q,x)=q^{h-\frac{j^{2}}{k+2}}x^{\frac{2j}{k+2}}
\sum_{n\in\mathbb{Z}}N_{n}(q)x^{n}.$$

\begin{theom}\label{cForm}
We have the following:

(1) $\displaystyle
\ch\left(\Omega^{+}_{j}(M)\right)(q,x)=
q^{h-\frac{j^{2}}{k+2}}x^{\frac{2j}{k+2}}
\sum_{n\in\mathbb{Z}}q^{\frac{\ n^2}{2}}M_{n}(q)x^{n}.$

(2) $\displaystyle
\ch\left(\Omega^{-}_{j}(N)\right)(q,x)=
q^{h}x^{j}
\sum_{n\in\mathbb{Z}}q^{-\frac{\ n^2}{2}}N_{n}(q)x^{n}$.
\end{theom}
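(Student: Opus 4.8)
The plan is to extract both identities directly from the branching isomorphisms of Lemma \ref{vacc} by restricting to a single Heisenberg-charge eigenspace and reading off the remaining gradings through the explicit formulae of Fact \ref{coset1}.

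For part (1), I would start from the $V_{c_{k}}(\mf{ns}_{2})\otimes\mathcal{F}^{+}$-module isomorphism $\iota_{+}^{*}(M\otimes V^{+})\cong\bigoplus_{n\in\mathbb{Z}}\Omega^{+}_{j-n}(M)\otimes\mathcal{F}^{+}_{(0,n)}$ of Lemma \ref{vacc}(1). Since ${\boldsymbol\alpha}^{+}$ acts as $0$ on every $V_{c_{k}}(\mf{ns}_{2})$-module while $\mathcal{F}^{+}_{(0,n)}=\mathcal{F}^{+}_{\kappa(j-n)}$ carries the constant $({\boldsymbol\alpha}^{+})_{(0)}$-eigenvalue $\kappa(j-n)$ and $\kappa\neq0$, the $({\boldsymbol\alpha}^{+})_{(0)}$-eigenspace of $\iota_{+}^{*}(M\otimes V^{+})$ for the eigenvalue $\kappa j$ is exactly the $n=0$ summand $\Omega^{+}_{j}(M)\otimes\mathcal{F}^{+}_{\kappa j}$. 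I would then compute that same eigenspace from the source side: by Fact \ref{coset1}(1), on $M\otimes V^{+}$ the operator $({\boldsymbol\alpha}^{+})_{(0)}$ acts as $\kappa(\tfrac12 H_{0}-\alpha^{+}_{0})$, the operator $\mathbf{J}_{(0)}$ as $\tfrac{\kappa^{2}}{2}H_{0}+\tfrac{c_{k}}{3}\alpha^{+}_{0}$, and $L_{0}$ is unchanged because $\iota_{+}$ preserves conformal vectors. Writing $M=\bigoplus_{a\in\mathbb{Z}}M(\cdot,j+a)$ for the $\tfrac12\mathbf{H}$-decomposition and using $\kappa^{2}=\tfrac{2}{k+2}$, $c_{k}=\tfrac{3k}{k+2}$, one checks that the condition $({\boldsymbol\alpha}^{+})_{(0)}=\kappa j$ pairs $M(\cdot,j+a)$ exactly with the charge-$a$ component $\mathbb{C}e^{a\alpha^{+}}\otimes\mathbb{C}[\alpha^{+}_{-n}|n>0]$ of $V^{+}$, and that $\mathbf{J}_{(0)}$ takes the value $\tfrac{2j}{k+2}+a$ there. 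Comparing the two descriptions of this eigenspace as $(q,x)$-graded spaces, with $x$ tracking $\mathbf{J}_{(0)}$, and using that the Heisenberg descendants contribute $(q;q)_{\infty}^{-1}$, that $e^{a\alpha^{+}}$ has conformal weight $a^{2}/2$, and that $\mathcal{F}^{+}_{\kappa j}$ has lowest conformal weight $\kappa^{2}j^{2}/2=j^{2}/(k+2)$, the common factor $(q;q)_{\infty}^{-1}$ and the prefactor $q^{j^{2}/(k+2)}$ cancel, leaving exactly the asserted formula for $\ch(\Omega^{+}_{j}(M))$.

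For part (2) I would argue symmetrically from Lemma \ref{vacc}(2) and Fact \ref{coset1}(2): there $({\boldsymbol\alpha}^{-})_{(0)}$ acts on $N\otimes V^{-}$ as $\kappa^{-1}(J_{0}-\alpha^{-}_{0})$ and $\tfrac12 H_{0}$ as $\tfrac{k+2}{2}J_{0}-\tfrac{k}{2}\alpha^{-}_{0}$, so the $({\boldsymbol\alpha}^{-})_{(0)}$-eigenspace for the eigenvalue $\kappa j$ is $\Omega^{-}_{j}(N)\otimes\mathcal{F}^{-}_{\kappa j}$ on one side and $\bigoplus_{a\in\mathbb{Z}}N(\cdot,\kappa^{2}j+a)\otimes\bigl(\mathbb{C}e^{-a\alpha^{-}}\otimes\mathbb{C}[\alpha^{-}_{-n}|n>0]\bigr)$ on the other, with $\tfrac12 H_{0}=j+a$ on the $a$-th summand. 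The one subtlety is that $Q^{-}$ is negative-definite, so $e^{-a\alpha^{-}}$ has conformal weight $-a^{2}/2$ and $\mathcal{F}^{-}_{\kappa j}$ has lowest conformal weight $-\kappa^{2}j^{2}/2=-j^{2}/(k+2)$; this sign is precisely what produces the factor $q^{-n^{2}/2}$ in the formula. Cancelling as before gives (2).

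I expect the only point demanding real care to be this threefold bookkeeping through $\iota_{\pm}$: for each summand $M(\cdot,j+a)\otimes\mathbb{C}e^{m\alpha^{+}}\otimes\cdots$ (respectively $N(\cdot,\kappa^{2}j+a)\otimes\mathbb{C}e^{m\alpha^{-}}\otimes\cdots$), pinning down simultaneously the $L_{0}$-eigenvalue, the Heisenberg charge, and the $\mathbf{J}_{(0)}$- (respectively $\tfrac12 H_{0}$-) eigenvalue; correctly handling the sign reversals coming from the negative-definite lattice $Q^{-}$ and the module $\mathcal{F}^{-}$; and noting, via $\kappa^{2}=\tfrac{2}{k+2}$, that the contribution of $\mathcal{F}^{\pm}_{\kappa j}$ cancels exactly against the shift it introduces. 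Conceptually nothing is hard: once the branching isomorphism of Lemma \ref{vacc} is available, the character formulae follow by this direct computation.
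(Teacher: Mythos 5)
Your argument is correct and is essentially the same as the paper's: the paper also computes the character of $\iota_{+}^{*}(M\otimes V^{+})$ in two ways via the branching decomposition and then extracts the formula by ``comparing the coefficients of $x_{2}^{\kappa j}$,'' which is exactly your restriction to the $\kappa j$-eigenspace of $({\boldsymbol\alpha}^{+})_{(0)}$. The only small divergence is in part~(2), where the paper just invokes the quasi-inverse property $\Omega^{+}_{j}\circ\Omega^{-}_{j}(N)\cong N$ to invert the formula from~(1), whereas you redo the symmetric eigenspace computation through $\iota_{-}$ and the negative-definite lattice $Q^{-}$; both are valid, yours being a bit longer but self-contained.
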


\begin{proof}
(1) By Proposition \ref{5.1}.1 and easy calculation, we have
\begin{equation}\label{7.1}
\begin{array}{cl}
\ch\left(\iota_{+}^{*}(M\otimes V^{+})\right)(q,x_{1},x_{2})
&\displaystyle=\sum_{n\in\mathbb{Z}}
\ch\left(\Omega_{j}^{+}(M)^{n}\right)(q,x_{1})
\ch( \mathcal{F}^{+}_{\kappa(j-n)})(q,x_{2})
\\
&=\ch(M)(q,x_{1}^{\kappa^{2}}x_{2}^{\kappa})
\ch(V^{+})(q,x_{1}^{1-\kappa^{2}}x_{2}^{-\kappa}).
\end{array}
\end{equation}
By the character formula for fermionic Fock module,
we have
$$\ch(V^{+})(q,x_{1}^{1-\kappa^{2}}x_{2}^{-\kappa})
=\sum_{m\in\mathbb{Z}}q^{\frac{\ m^{2}}{2}}
x_{1}^{(1-\kappa^{2})m}x_{2}^{-\kappa m}\times(q;q)^{-1}_{\infty}.$$
Multiplying $q^{-\frac{j^{2}}{k+2}}(q;q)_{\infty}$ to (\ref{7.1})
and comparing the coefficients of $x_{2}^{\kappa j}$,
we obtain the required formula.

(2) The equality follows from (1) and $\Omega^{+}_{j}\circ\Omega^{-}_{j}(N)\simeq N$.
\end{proof}

\begin{rem}
The character formula for irreducible highest weight $\widehat{\mf{sl}}_{2}$-modules
at non-critical level ({\it i.e.} $k\neq-2$) is deduced by \cite{malikov1991verma}.
Therefore we obtain all the corresponding character formula for the simple
quotients of chiral Verma modules.
\end{rem}

\subsubsection{At admissible levels}

We fix a pair of coprime integers 
$(p,p')\in\mathbb{Z}_{\geq2}\times\mathbb{Z}_{\geq1}$
and a pair of integers $(r,s)$ such that $1\leq r\leq p-1$ 
and $0\leq s\leq p'-1$.
Throughout this subsection, we always assume
that $k=-2+\frac{p}{\ p'}$.
It follows that $c_{k}=3\left(1-\frac{2p'}{p}\right)$.

We use the following notations:
$$M(n,m):=M_{\frac{n-1}{2}-\frac{p}{\ p'}\frac{m}{2},k},\ 
\mathcal{M}^{+}(n,m):=\mathcal{M}^{+}_{\frac{\ p'}{p}(n-1)-m,c_{k}}$$
for $n,m\in\mathbb{Z}$.
Denote by $L(n,m)$ and $\mathcal{L}(n,m)$ the simple quotients
of $M(n,m)$ and $\mathcal{M}^{+}(n,m)$, respectively.
Then the following BGG resolution 
of $L(r,s)$ is obtained by 
F. Malikov.

\begin{fct}\cite[Theorem A]{malikov1991verma}
We set $M_{n}:=M(n)\oplus M(-n)$ for $n>0$,
where
$M(2m):=M(2pm+r,s)$ and $M(2m-1):=M(2pm-r,s)$
for $m\in\mathbb{Z}$.
Then there exists an exact sequence
$$\cdots
\rightarrow M_{n}
\rightarrow \cdots
\rightarrow M_{2}
\rightarrow M_{1}
\rightarrow M(r,s)
\rightarrow L(r,s)
\rightarrow 0.$$
\end{fct}

%\begin{rem}
%The fact also follows from
%the existence of the Bernstein-Gel'fand-Gel'fand (BGG) resolutions for 
%integrable highest weight modules over affine Lie algebras 
%(see Section 8 in \cite{garland1976lie}) and
%Fiebig's equivalence between certain blocks of the BGG category $\mathcal{O}$ 
%\cite[Theorem 11]{fiebig2006combinatorics}.
%For general untwisted affine Lie algebras $\widehat{\mf{g}}$,
%the existence of the BGG-type resolution for
%the principal admissible modules of $\widehat{\mf{g}}$
%follows from \cite{garland1976lie} and 
%\cite[Theorem 11]{fiebig2006combinatorics}.
%\end{rem}

Applying the functor $\Omega^{+}_{\frac{r-1}{2}-\frac{p}{\ p'}\frac{s}{2}}$
to the above resolution,
 we obtain by Corollary \ref{sfeq} another proof of 
the following BGG-type resolution
for the $\mathcal{N}=2$ SCA module $\mathcal{L}(r,s)$
given by \cite[Theorem 3.1]{FSST99}.

\begin{theom}\label{sBGG}
We set $\mathcal{M}^{+}_{n}:=\mathcal{M}^{+}(n)\oplus \mathcal{M}^{+}(-n)$ for $n>0$,
where
$\mathcal{M}^{+}(2m):=\mathcal{M}^{+}(2pm+r,s)^{pm}$
and $\mathcal{M}^{+}(2m-1):=\mathcal{M}^{+}(2pm-r,s)^{pm-r}$
for $m\in\mathbb{Z}$.
Then there exists an exact sequence
$$\cdots
\rightarrow \mathcal{M}_{n}^{+}
\rightarrow \cdots
\rightarrow \mathcal{M}_{2}^{+}
\rightarrow \mathcal{M}_{1}^{+}
\rightarrow \mathcal{M}^{+}(r,s)
\rightarrow \mathcal{L}(r,s)
\rightarrow 0.$$
\end{theom}

\begin{rem}
(1) The proof in \cite{FSST99} is essentially based on the `equivalence' 
in \cite[Theorem IV.3]{FST98}.
However, an explicit construction of the categorical equivalence is not given
in the literature to the best of our knowledge.

(2) When $p'=1$, every irreducible 
unitarizable highest weight $\mf{ns}_{2}$-module 
of central charge $c=3(1-\frac{2}{p})$ is
of the form $\mathcal{L}(r,0)^{\theta}$
for some $1\leq r\leq p-1$ and $0\leq\theta\leq r-1$.
The highest weight $(h,j,c)$ of $\mathcal{L}(r,0)^{\theta}$ is given by
$$(h,j)=\left(\frac{(r-\theta-\frac{1}{2})(\theta+\frac{1}{2})-\frac{1}{4}}{p},
\frac{(r-\theta-\frac{1}{2})-(\theta+\frac{1}{2})}{p}\right).$$
\end{rem}

We set $\displaystyle \vartheta(q,z)
:=(-zq^{\frac{1}{2}};q)_{\infty}(-z^{-1}q^{\frac{1}{2}};q)_{\infty}(q;q)_{\infty}$
%=\sum_{n\in\mathbb{Z}}z^{n}q^{\frac{\ n^{2}}{2}}
and
$\eta(q):=q^{\frac{1}{24}}(q;q)_{\infty}.$
%$|x|<|q|^{-\theta-\frac{1}{2}}$, {\it i.e.}
%$$|x|<|q|^{-\frac{1}{2}},|q|^{-\frac{3}{2}},|q|^{-\frac{5}{2}}\ldots,$$
% or $|q|^{-\theta-\frac{1}{2}}<|x|$, {\it i.e.}
%$$\ldots,|q|^{\frac{5}{2}},|q|^{\frac{3}{2}},|q|^{\frac{1}{2}}<|x|.$$
Then, as a corollary of Proposition \ref{sBGG}, 
we reprove the following character formula
for $\widetilde{\ch}\bigl(\mathcal{L}(r,s)\bigr)
:=q^{-\frac{c_{k}}{24}}\ch\bigl(\mathcal{L}(r,s)\bigr)$
obtained by \cite[Theorem 4.8]{FSST99}.

\begin{theom}\label{FSSTCF}
Put $j=\frac{r-1}{2}-\frac{p}{\ p'}\frac{s}{2}$. Then we have
$$\widetilde{\ch}\bigl(\mathcal{L}(r,s)\bigr)
=q^{-\frac{\ p'}{p}j^{2}}x^{\frac{2p'}{p}j}
\frac{\vartheta(q,x)}{\eta(q)^{3}}\Phi_{p,p';r,s}(q,x),$$
where $\Phi_{p,p';r,s}(q,x)$ is the expansion of the meromorphic function
$$\sum_{n\in\mathbb{Z}}
\left(\frac{q^{\frac{\ p'}{p}(pn+\frac{1}{2}+j)^{2}}}{1+xq^{pn+\frac{1}{2}}}
-\frac{q^{\frac{\ p'}{p}(pn-r+\frac{1}{2}+j)^{2}}}{1+xq^{pn-r+\frac{1}{2}}}\right)$$
in the region $\mathbb{A}$ (see Remark \ref{divergent} for the definition).
\end{theom}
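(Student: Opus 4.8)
The plan is to obtain $\widetilde{\ch}\bigl(\mathcal{L}(r,s)\bigr)$ as the Euler characteristic of the resolution of Theorem \ref{sBGG}. Since the formal character is additive on short exact sequences, and since for each fixed weight only finitely many of the modules appearing in that resolution have a nonzero component there---their minimal $L_{0}$-eigenvalues tend to $+\infty$, as the $q$-exponents computed below make explicit---splitting the resolution into short exact sequences gives, coefficient-wise,
\[
\widetilde{\ch}\bigl(\mathcal{L}(r,s)\bigr)
=\sum_{m\in\mathbb{Z}}\Bigl(
\widetilde{\ch}\bigl(\mathcal{M}^{+}(2pm+r,s)^{pm}\bigr)
-\widetilde{\ch}\bigl(\mathcal{M}^{+}(2pm-r,s)^{pm-r}\bigr)\Bigr),
\]
the $m=0$ term of the first family being $\widetilde{\ch}\bigl(\mathcal{M}^{+}(r,s)\bigr)$. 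It remains to evaluate each summand and to recognise the result.

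First I would record the character of an untwisted chiral Verma module. The vector $G^{+}_{-1/2}\ket{\nu/2,\nu,c}^{\mf{ns}_{2}}$ is singular, so a PBW argument shows that $\mathcal{M}^{+}_{\nu,c}$ is freely generated from its lowest weight vector by $L_{-n},J_{-n}\ (n\geq1)$, $G^{-}_{-r}\ (r\geq\tfrac12)$ and $G^{+}_{-r}\ (r\geq\tfrac32)$; the same formula also follows from Proposition \ref{cpair}(2) and Theorem \ref{cForm}(1) applied to the standard product formula for the character of the $\widehat{\mf{sl}}_{2}$-Verma module $M_{j,k}$. Via the Jacobi triple product this reads
\[
\ch\bigl(\mathcal{M}^{+}_{\nu,c_{k}}\bigr)(q,x)
=q^{\frac{\nu}{2}+\frac18}\,x^{\nu}\,\frac{1}{1+xq^{1/2}}\,
\frac{\vartheta(q,x)}{\eta(q)^{3}},
\]
where the factor $1/(1+xq^{1/2})$ accounts precisely for the absent creation mode $G^{+}_{-1/2}$.

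Next I would apply the spectral-flow character formula of Lemma \ref{twchar} together with the quasi-periodicity $\vartheta(q,xq^{a})=q^{-a^{2}/2}x^{-a}\,\vartheta(q,x)$ for $a\in\mathbb{Z}$, using $\tfrac{k}{k+2}=1-\tfrac{2p'}{p}$ and $c_{k}=3\bigl(1-\tfrac{2p'}{p}\bigr)$. Writing $\mathcal{M}^{+}(2pm+r,s)^{pm}=\bigl(\mathcal{M}^{+}_{\frac{2(j+pm)}{k+2},\,c_{k}}\bigr)^{pm}$ and $\mathcal{M}^{+}(2pm-r,s)^{pm-r}=\bigl(\mathcal{M}^{+}_{\frac{2(j+pm-r)}{k+2},\,c_{k}}\bigr)^{pm-r}$ (which one checks from $\tfrac{2j}{k+2}=\tfrac{p'}{p}(r-1)-s$), a bookkeeping of exponents gives for any $\theta\in\mathbb{Z}$
\[
\widetilde{\ch}\Bigl(\bigl(\mathcal{M}^{+}_{\frac{2(j+\theta)}{k+2},\,c_{k}}\bigr)^{\theta}\Bigr)(q,x)
=q^{-\frac{p'}{p}j^{2}}x^{\frac{2p'}{p}j}\,\frac{\vartheta(q,x)}{\eta(q)^{3}}\cdot
\frac{q^{\frac{p'}{p}(\theta+\frac12+j)^{2}}}{1+xq^{\theta+1/2}}.
\]
Putting $\theta=pm$ and $\theta=pm-r$ and summing with the signs dictated above turns the displayed Euler characteristic into $q^{-\frac{p'}{p}j^{2}}x^{\frac{2p'}{p}j}\,\vartheta(q,x)/\eta(q)^{3}$ times exactly the series $\Phi_{p,p';r,s}(q,x)$. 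I expect this exponent bookkeeping, and matching the shift and sign conventions of the BGG resolution with the definition of $\Phi_{p,p';r,s}$, to be the most error-prone step, although it is entirely mechanical.

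Finally I would check convergence in the region $\mathbb{A}$ of Remark \ref{divergent}. The factor $\vartheta(q,x)/\eta(q)^{3}$ is holomorphic for $0<|q|<1$, $x\neq0$; the poles of $1/(1+xq^{pn+1/2})$ and $1/(1+xq^{pn-r+1/2})$ lie on $|x|=|q|^{-pn-1/2}$, resp.\ $|x|=|q|^{-pn+r-1/2}$, all of which avoid the open region $\mathbb{A}$ (here one uses $1\leq r\leq p-1$); and since the numerators $q^{\frac{p'}{p}(pn+\frac12+j)^{2}}$, $q^{\frac{p'}{p}(pn-r+\frac12+j)^{2}}$ decay superexponentially in $|n|$ while the remaining factors stay bounded on compact subsets of $\mathbb{A}$ away from the poles, the series $\Phi_{p,p';r,s}$ converges absolutely and locally uniformly there. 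On the other hand $\widetilde{\ch}\bigl(\mathcal{L}(r,s)\bigr)$ is dominated coefficient-wise by $\widetilde{\ch}$ of a Verma module $\mathcal{M}_{h,j,c_{k}}$ of $\mf{ns}_{2}$, whose character converges on $\mathbb{A}$ by Remark \ref{divergent}; hence the two sides agree there as holomorphic functions, which is the assertion.
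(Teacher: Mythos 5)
Your proof follows essentially the same route as the paper's: compute the character of each twisted chiral Verma module $\mathcal{M}^{+}(2\theta+r,s)^{\theta}$ via PBW and the spectral-flow formula of Lemma \ref{twchar}, then take the Euler characteristic of the BGG-type resolution of Theorem \ref{sBGG}. You have simply supplied the exponent bookkeeping, the theta quasi-periodicity, and the convergence discussion in $\mathbb{A}$ that the paper's terse proof leaves implicit.
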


\begin{proof}
By the Poincar\'{e}-Birkoff-Witt theorem and Lemma \ref{twchar}, we have
$$q^{-\frac{c_{k}}{24}}
\ch\left(\mathcal{M}^{+}(2\theta+r,s)^{\theta}\right)
=q^{-\frac{\ p'}{p}j^{2}}
x^{\frac{2p'}{p}j}\frac{\vartheta(q,x)}{\eta(q)^{3}}
\frac{q^{\frac{\ p'}{p}(\theta+\frac{1}{2}+j)^{2}}}
{1+xq^{\theta+\frac{1}{2}}}$$
for any $\theta\in\mathbb{Z}$.
Thus the formula follows from the BGG-type resolution.
\end{proof}

\begin{rem}
(1) When $p'=1$, the character formula is well known and can be written in terms of theta functions
(see \cite{dobrev1987characters}, \cite{kiritsis1988character}, \cite{matsuo1987character}).
See \cite[\S5]{carpi2015n} for the proof without using the embedding diagram (cf.\,\cite{dorrzapf1998embedding}).

(2) For general $p'\geq1$, the formal series
$$\Phi_{p,p';r,s}(q,x)=
\left(\sum_{n,m\geq0}-\sum_{n,m<0}\right)
(-x)^{m}\varphi_{p,p';r,s}^{n,m}(q),$$
where $$\varphi_{p,p';r,s}^{n,m}(q):=q^{\frac{\ p'}{p}(pn+\frac{1}{2}+j)^{2}+(pn+\frac{1}{2})m}
-q^{\frac{\ p'}{p}(pn+p-r+\frac{1}{2}+j)^{2}+(pn+p-r+\frac{1}{2})m},$$
is absolutely convergent in the region $\mathbb{A}$ (see Remark \ref{divergent}).
Note that this function is related with a certain specialization of 
the mock theta function of type $A(1,0)$ defined in \cite{kac2014representations}.
See \cite[Proposition 9.2]{kac2014representations} for example.
\end{rem}

\appendix

\section{Vertex superalgebras}\label{Cvertex superalgebra}

In this section we recall some facts
about conformal vertex superalgebras and their modules
to fix our notation. See \cite{Kac97V} for the details.

\subsection{Definitions}

Let $V=V^{\bar{0}}\oplus V^{\bar{1}}$
be a $\mathbb{Z}_{2}$-graded $\mathbb{C}$-vector space,
$Y(-;z)\colon V\rightarrow\End(V)[\![z^{\pm1}]\!]$
an even linear mapping
and ${\bf 1}\in V^{\bar{0}}$ a distinguished non-zero vector.
In what follows, we write
$$Y(A;z)=\sum_{n\in\mathbb{Z}}A_{(n)}z^{-n-1}\in \End(V)[\![z^{\pm1}]\!]$$
for $A\in V$.

\begin{dfn}\label{VSA}
A triple $(V,Y,{\bf 1})$ is a \emph{vertex superalgebra} if
\begin{enumerate}
\item
$A_{(n)}B=0$ for any $A,B\in V$ and $n\gg0$,
\item
${\bf 1}_{(n)}A=\delta_{n,-1}A$ for any $A\in V$ and $n\in\mathbb{Z}$,
\item $A_{(-1)}{\bf 1}=A$ for any $A\in V$,
\item the \emph{Borcherds identity} holds, {\it i.e.}
\begin{equation*}
\begin{array}{ll}
&\displaystyle\sum_{i\geq0}\binom{p}{i}(A_{(r+i)}B)_{(p+q-i)}\\
&\displaystyle
\hspace{4mm}
=\sum_{i\geq0}(-1)^{i}\binom{r}{i}
\left(A_{(p+r-i)}B_{(q+i)}-(-1)^{r+ab}B_{(q+r-i)}A_{(p+i)}\right)
\end{array}
\end{equation*}
for any $A\in V^{\bar{a}}$, $B\in V^{\bar{b}}$ and $p,q,r\in\mathbb{Z}$.
\end{enumerate}
Moreover, a vertex superalgebra $(V,Y,{\bf 1})$ is \emph{$\mathbb{Q}$-graded} if
\begin{enumerate}
\item
$V$ is a $\mathbb{Q}$-graded $\mathbb{C}$-vector space
, {\it i.e.} $\displaystyle V=\bigoplus_{\Delta\in\mathbb{Q}}V_{\Delta}$,
\item
$(V_{\Delta})_{(n)}(V_{\Delta'})\subset V_{\Delta+\Delta'-n-1}$
for any $\Delta, \Delta'\in\mathbb{Q}$ and $n\in\mathbb{Z}$.
\end{enumerate}
\end{dfn}

An even vector $\omega\in V_{2}$ is a \emph{conformal vector
of central charge $c\in\mathbb{C}$} if
\begin{enumerate}
\item The Virasoro algebra
${\it Vir}=\bigoplus_{n\in\mathbb{Z}}\mathbb{C}L_{n}\oplus\mathbb{C}C$ 
acts on $V$,
where the action is given by $L_{n}\mapsto\omega_{(n+1)}$ and
$C\mapsto c\id_{V}$. 
\item For any $A\in V$, we have
$Y(\omega_{(0)}A;z)=\partial_{z}Y(A;z).$
\item The operator $\omega_{(1)}$ is diagonalizable and the corresponding eigenspace decomposition
coincides with the $\mathbb{Q}$-grading of $V$.
\item For any $n\geq2$, the operator $\omega_{(n)}$ is locally nilpotent on $V$.
\end{enumerate}

In this paper, a $\mathbb{Q}$-graded vertex superalgebra together with
a conformal vector of central charge $c\in\mathbb{C}$
is called a \emph{conformal vertex superalgebra} of central charge $c$.
By abuse of notation, we write
$L_{n}$ for the linear operator $\omega_{(n+1)}\in\End(V)$.

\begin{rem}
A conformal vertex superalgebra
$(V,Y,{\bf 1},\omega)$ is called a 
\emph{vertex operator superalgebra} (VOSA)
 if ${\sf dim} V_{\Delta}<\infty$ for any $\Delta\in\mathbb{Q}$ 
and $V_{\Delta}=\{0\}$ for $\Delta\notin\frac{1}{2}\mathbb{Z}$ or $\Delta\ll0$.
A VOSA $(V,Y,{\bf 1},\omega)$ is simply called 
a \emph{vertex operator algebra} (VOA) if 
$V=V^{\bar{0}}$ and $V_{\Delta}=\{0\}$ for $\Delta\notin\mathbb{Z}$.
\end{rem}

\begin{dfn}
Let $(V^{1},Y^{1},{\bf 1}^{1},\omega^{1})$ and $(V^{2},Y^{2},{\bf 1}^{2},\omega^{2})$ 
be conformal vertex superalgebras.
A linear mapping $f\colon V^{1}\rightarrow V^{2}$ is 
a \emph{morphism of vertex superalgebra} if
$f({\bf 1}^{1})={\bf 1}^{2}$, $f(\omega^{1})=\omega^{2}$
and $f\bigl(Y^{1}(A;z)B\bigr)=Y^{2}\bigl(f(A);z\bigr)f(B)$ for any $A,B\in V^{1}$.
\end{dfn}

\begin{dfn}
Let $V$ be a vertex supralgebra and $W$ its vertex subsuperalgebra.
The \emph{commutant} (or \emph{coset}) \emph{vertex superalgebra}
${\sf Com}(W,V)$ is
the vertex subsuperalgebra of $V$ defined by
$${\sf Com}(W,V):=\bigl\{A\in V\,\big|\, A_{(n)}B=0 \text{ for any } B\in W \text{ and } n\geq0\bigr\}$$
together with $Y(-;z)|_{{\sf Com}(W,V)}$ and ${\bf 1}\in {\sf Com}(W,V)$.
\end{dfn}

\subsection{Weak modules}

Let $M=M^{\bar{0}}\oplus M^{\bar{1}}$ be
a $\mathbb{Z}_{2}$-graded $\mathbb{C}$-vector space and
$Y_{M}(-;z)\colon V\rightarrow\End(M)[\![z^{\pm1}]\!]$
an even linear mapping.
Similarly to the case of the state-field correspondence $Y(-;z)$, we write
$$Y_{M}(A;z)=\sum_{n\in\mathbb{Z}}A^{M}_{(n)}z^{-n-1}\in \End(M)[\![z^{\pm1}]\!]$$
for $A\in V$.

\begin{dfn}\label{weakmod}
A pair $(M,Y_{M})$ is a \emph{weak $V$-module} if
\begin{enumerate}
\item
$A^{M}_{(n)}m=0$ for any $A\in V$, $m\in M$ and $n\gg0$,
\item
${\bf 1}^{M}_{(n)}m=\delta_{n,-1}m$ for any $m\in M$ and $n\in\mathbb{Z}$,
\item the Borcherds identity holds, {\it i.e.}
\begin{equation*}
\begin{array}{ll}
&\displaystyle\sum_{i\geq0}\binom{p}{i}(A_{(r+i)}B)^{M}_{(p+q-i)}\\
&\displaystyle
\hspace{4mm}
=\sum_{i\geq0}(-1)^{i}\binom{r}{i}
\left(A^{M}_{(p+r-i)}B^{M}_{(q+i)}-(-1)^{r+ab}B^{M}_{(q+r-i)}A^{M}_{(p+i)}\right)
\end{array}
\end{equation*}
for any $A\in V^{\bar{a}}$, $B\in V^{\bar{b}}$ and $p,q,r\in\mathbb{Z}$.
\end{enumerate}
\end{dfn}

\begin{dfn}
Let $(M^{1},Y_{M^{1}})$ and $(M^{2},Y_{M^{2}})$ be weak $V$-modules.
A linear mapping $f\colon M^{1}\rightarrow M^{2}$ is 
a \emph{morphism of weak $V$-modules} if
$f\bigl(Y_{M^{1}}(A;z)m\bigr)=Y_{M^{2}}(A;z)f(m)$
for any $A\in V$ and $m\in M^{1}$.
\end{dfn}

We denote the category of weak $V$-modules by $V$-{\sf Mod}.
By definition, the pair $(V,Y)$ is a weak $V$-module called the \emph{adjoint module}.

\subsection{VOAs associated with affine Lie algebras}\label{Vaff}

In this subsection we work on a reductive Lie algebra $\mf{g}$ over $\mathbb{C}$
to fix some notations.
Let $B\colon \mf{g}\times\mf{g}\to\mathbb{C}$ 
be a symmetric invariant bilinear form and
 $\widehat{\mf{g}}_{B}=\mf{g}\otimes\mathbb{C}[t,t^{-1}]
\oplus\mathbb{C}K$
the corresponding affinization of $\mf{g}$,
{\it i.e.}
the commutation relations are given by
\begin{equation*}
[X_{n},Y_{m}]=[X,Y]_{n+m}+B(X,Y)\,n\delta_{n+m,0}K,\ 
[\widehat{\mf{g}},K]=\{0\}
\end{equation*}
for $X,Y\in\mf{g}$ and $n,m\in\mathbb{Z}$,
where $X_{n}:=X\otimes t^{n}\in\widehat{\mf{g}}_{B}$.
We write $\widehat{\mf{g}}_{B}^{\geq0}$ for the Lie subalgebra
$\mf{g}\otimes\mathbb{C}[t]\oplus\mathbb{C}K$ of $\widehat{\mf{g}}_{B}$.

%\cite[Theorem 2.4.1]{frenkel1992vertex}

\begin{prp}
Let $\mathbb{C}_{B}$ be a $1$-dimensional 
$\widehat{\mf{g}}_{B}^{\geq0}$-module
defined by $X_{n}.1=0$ and $K.1=1$ for $X\in\mf{g}$ and $n\geq0$.
Then there exists a unique vertex algebra structure on the
induced module
$$V_{B}(\mf{g}):=\Ind_{\widehat{\mf{g}}_{B}^{\geq0}}^{\widehat{\mf{g}}_{B}}
\mathbb{C}_{B}$$
such that $\mathbf{1}_{B}:=1\otimes1$ and
$$Y(X_{-1}\mathbf{1}_{B};z)=\sum_{n\in\mathbb{Z}}X_{n}z^{-n-1}$$
for $X\in\mf{g}$.
We call this vertex algebra
the \emph{universal affine vertex algebra associated with the affine Lie algebra
$\widehat{\mf{g}}_{B}$}.
\end{prp}

When $\mf{g}$ is simple (resp. abelian), we define $h^{\vee}_{\mf{g}}$
to be the dual Coxeter number of $\mf{g}$ (resp. $0$)
and $B_{0}$ to be the normalized symmetric invariant bilinear form
(resp.~an arbitrarily fixed non-degenerate bilinear form) on $\mf{g}$.

\begin{prp}
Suppose that $\mf{g}$ is simple or abelian.
Fix $k\in\mathbb{C}\setminus\{0,-h^{\vee}_{\mf{g}}\}$
and take a $\mathbb{C}$-basis $\{X_{a}\,|\,1\leq a\leq{\sf dim}\mf{g}\}$ of $\mf{g}$ 
and its dual basis $\{X^{a}\,|\,1\leq a\leq{\sf dim}\mf{g}\}$ of $\mf{g}$ with respect to 
$kB_{0}$.
Then
\begin{equation*}
\omega_{\mf{g},k}:=\frac{1}{2(k+h^{\vee}_{\mf{g}})}
\sum_{a=1}^{{\sf dim}\mf{g}}(X_{a})_{-1}(X^{a})_{-1}\mathbf{1}_{kB_{0}}\in V_{kB_{0}}(\mf{g})
\end{equation*}
defines a conformal vector of central charge $c_{\mf{g},k}:=\frac{k{\sf dim}{\mf{g}}}{k+h^{\vee}_{\mf{g}}}$. 
In addition, the quadruple 
$(V_{kB_{0}}(\mf{g}),Y,\mathbf{1}_{kB_{0}},\omega_{\mf{g},k})$ is a VOA.
\end{prp}

We write $L_{kB_{0}}(\mf{g})$ for the unique simple quotient VOA of $V_{kB_{0}}(\mf{g})$.
For example, see \cite[\S 2]{frenkel1992vertex} for detail.
Throughout this paper, the image of 
$A\in V_{kB_{0}}(\mf{g})$ in the quotient space $L_{kB_{0}}(\mf{g})$ again denotes $A$ by abuse of notation.

\section{Spectral flow automoriphisms}\label{SFA}

In this section we identify spectral flow twisted modules 
over $\widehat{\mf{sl}}_{2}$ and $\mf{ns}_{2}$ in \cite{FST98} 
with Li's $\Delta$-twisted modules in \cite{li1997physics}.
Moreover, we consider analogues of the spectral flows in the
Heisenberg VOAs $\mathcal{F}^{\pm}$ and lattice conformal vertex superalgebras $V^{\pm}$.

\subsection{Li's $\Delta$-automorphisms}\label{Liauto}

In this subsection,
$V$ always stands for 
$V_{\mf{sl}_{2}}$, $V_{\mf{ns}_{2}}$, $\mathcal{F}^{\pm}$ or $V^{\pm}$
(see \S \ref{Prelim} and \S \ref{Setting}).
At first, we recall the Li's theory in the super case.
According to \cite{li1997physics}, we consider
$\Delta(z)\in\End(V)\{z\}$ which satisfies the following conditions:
\begin{align*}
& \Delta(z)v\in V[z^{\pm1}]\text{ for any }v\in V, \\
& \Delta(z){\bf 1}={\bf 1}, \\
& [L_{-1},\Delta(z)]=-\partial_{z}\Delta(z), \\
& Y\bigl(\Delta(z+w)v;z\bigr)\Delta(w)
=\Delta(w)Y(v;z).
\end{align*}
Let us denote by $G(V)$ the set of elements of $\End(V)\{z\}$
satisfying the above conditions.
For simplicity, we put
$h^{V_{\mf{sl}_{2}}}:=\frac{\theta}{2}\mathbf{H}$, 
$h^{V_{\mf{ns}_{2}}}:=\theta\mathbf{J}$, 
$h^{\mathcal{F}^{\pm}}:=\lambda{\boldsymbol \alpha}^{\pm}$
and
$h^{V^{\pm}}:=r\alpha^{\pm}_{-1}$
for $\theta,r\in\mathbb{Z}$ and $\lambda\in\mathbb{C}$.

\begin{dfn}
We define
$$\Delta(h^{V};z):=z^{h^{V}_{(0)}}\exp\Bigl(\sum_{j=1}^{\infty}
\frac{h^{V}_{(j)}}{-j}(-z)^{-j}\Bigr)\in\End(V)[\![z^{\pm1}]\!].$$
\end{dfn}

\begin{lem}
For any $\theta,r\in\mathbb{Z}$ and $\lambda\in\mathbb{C}$,
we have $\Delta\left(h^{V};z\right)\in G\bigl(V\bigr)$.
\end{lem}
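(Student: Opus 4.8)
The plan is to reduce the statement to Li's general criterion in \cite{li1997physics} for an operator of the shape $\Delta(h;z)$ to lie in $G(V)$, and then to check its hypotheses case by case. Recall that Li's construction yields the following: if $h\in V$ is a primary vector of conformal weight one, i.e.\ $L_n h=\delta_{n,0}h$ for all $n\ge 0$, if in addition $h_{(n)}h\in\mathbb{C}{\bf 1}$ for $n\ge 1$ and $h_{(n)}h=0$ for $n\ge 2$ (so that the modes of $Y(h;z)$ together with the identity span a Heisenberg Lie algebra, and $h_{(0)}h=0$ follows automatically by skew-symmetry), and finally $h_{(0)}$ acts semisimply on $V$ with all eigenvalues in $\mathbb{Z}$, then $\Delta(h;z)=z^{h_{(0)}}\exp\bigl(\sum_{j\ge 1}\tfrac{h_{(j)}}{-j}(-z)^{-j}\bigr)$ belongs to $G(V)$.

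First I would recall, briefly, why the four conditions defining $G(V)$ follow from these hypotheses. The semisimplicity and integrality of $h_{(0)}$ make $z^{h_{(0)}}$ a well-defined operator on $V$; and on each $h_{(0)}$-eigenspace the conformal grading (or, in the case $V^{-}$, whose conformal grading is not bounded below, the standard grading of each charge-sector Fock module) is bounded below and is strictly lowered by $h_{(j)}$ for $j\ge 1$, so the exponential factor acts as a polynomial in $z^{-1}$ on every vector and $\Delta(h;z)v\in V[z,z^{-1}]$. The relation $\Delta(h;z){\bf 1}={\bf 1}$ is immediate from $h_{(n)}{\bf 1}=0$ for $n\ge 0$, and $[L_{-1},\Delta(h;z)]=-\partial_{z}\Delta(h;z)$ follows from the translation-covariance identity $[L_{-1},h_{(n)}]=-n\,h_{(n-1)}$ together with the commutativity of the modes $h_{(j)}$, $j\ge 1$. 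The remaining conjugation identity $Y(\Delta(h;z+w)v;z)\Delta(h;w)=\Delta(h;w)Y(v;z)$ is the one substantive point; it is exactly Li's main computation, which uses precisely the Heisenberg relations among the modes of $h$, and I would invoke it rather than reproduce it.

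Second I would verify the hypotheses for each of the pairs $(V,h^{V})$. In every case $h^{V}$ is a scalar multiple of one of $\mathbf{H}$, $\mathbf{J}$, $\boldsymbol\alpha^{\pm}$, $\alpha^{\pm}_{-1}$, each of which is a primary vector of conformal weight one for the corresponding Sugawara or free-field conformal vector (read off directly from the commutation relations of Section~2); moreover in each case $h_{(0)}h=0$, $h_{(1)}h$ is a scalar multiple of the vacuum, and $h_{(n)}h=0$ for $n\ge 2$, so the Heisenberg condition holds. It then remains to check integrality and semisimplicity of $h^{V}_{(0)}$: for $V_{\mf{sl}_2}$ one has $h^{V_{\mf{sl}_2}}_{(0)}=\tfrac{\theta}{2}H_0$ and $H_0$ acts with eigenvalues in $2\mathbb{Z}$ (these modules are generated from the vacuum by the $\ad(H)$-weight vectors $E,H,F$), so the product is integral for $\theta\in\mathbb{Z}$; for $V_{\mf{ns}_2}$ one has $h^{V_{\mf{ns}_2}}_{(0)}=\theta J_0$ and $J_0$ has integer eigenvalues (the generators $\mathbf{J}$, $\omega^{\mf{ns}_{2}}$, $\mathbf{G}^{\pm}$ carry $J_0$-charges $0,0,\pm1$); for $V^{\pm}$ one has $h^{V^{\pm}}_{(0)}=r\alpha^{\pm}_0$, which acts on $\mathbb{C}e^{n\alpha^{\pm}}\otimes{\rm Sym}(\hat{\mf t}^{\pm}_{<0})$ as multiplication by $\pm rn\in\mathbb{Z}$; and for $\mathcal F^{\pm}$ the zero mode $\alpha^{\pm}_0$ acts as $0$ on the charge-zero Fock module $\mathcal F^{\pm}_0$, so $h^{\mathcal F^{\pm}}_{(0)}=\lambda\alpha^{\pm}_0=0$ and the integrality condition is vacuous --- which is why $\lambda$ may range over all of $\mathbb{C}$ while $\theta$ and $r$ must be integral. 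Li's arguments are insensitive to the signature of the invariant form, so the negative-definite cases $\mathcal F^{-}$ and $V^{-}$ need no separate treatment beyond the remark above about their conformal gradings.

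The one genuine obstacle is the conjugation identity, the last of the four conditions defining $G(V)$: it is the only nonformal input, and the strategy is to quote Li's proof, the Heisenberg property verified above being exactly what that proof needs. Everything else reduces to the elementary mode computations indicated here.
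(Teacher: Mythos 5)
Your proof is correct and takes essentially the same route as the paper: reduce to Li's criterion for $\Delta(h;z)\in G(V)$ from \cite[Proposition 3.2]{li1997physics} and verify its hypotheses (notably integrality of the $h_{(0)}$-eigenvalues) case by case. The paper's own proof is terser --- it simply records the integrality remark and observes that Li's argument goes through even though these vertex superalgebras are not regular --- whereas you spell out explicitly why the non-regularity (and, for $V^{-}$, the unboundedness of the conformal grading) is harmless, namely that each charge sector is still a lower-bounded Fock module on which the positive modes $h_{(j)}$ act locally nilpotently; this is a genuine and welcome clarification of what the paper's phrase ``the same discussion still works'' is really asserting.
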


\begin{proof}
Note that $h^{V}_{(0)}$ has integral eigenvalues on $V$.
Though the vertex superalgebras are not regular,
the same discussion in the proof of \cite[Proposition 3.2]{li1997physics}
still works.
See also \cite[Proposition 2.1]{Ad01}.
\end{proof}

For a weight-wise admissible $\bigl(V,\mf{h}(V)\bigr)$-module $(M,Y_{M})$,
the mapping
\begin{equation*}
Y_{M}(\Delta(h^{V};z)-;z):\ V\rightarrow\End(M)[\![z^{\pm1}]\!]
\end{equation*}
defines another weight-wise admissible $(V,S_{V})$-module structure on $M$.

\begin{dfn}
The assignments
\begin{align*}
&\Obj(\mathcal{C}_{V})\rightarrow\Obj(\mathcal{C}_{V});\,
M\mapsto \Delta^{h}(M):=\Bigl(M,Y_{M}\bigl(\Delta(h;z)-;z\bigr)\Bigr),\\
&\Hom_{V}(M,N)\rightarrow\Hom_{V}\bigl(\Delta^{h}(M),\Delta^{h}(N)\bigr);\,
f\mapsto f
\end{align*}
define a functor 
$\Delta^{h}\colon \mathcal{C}_{V}\rightarrow \mathcal{C}_{V}.$
\end{dfn}

To simplify notation,
we put
$\Delta^{\theta}_{\mf{sl}_{2}}:=
\Delta^{\frac{\theta}{2}\mathbf{H}}$, 
$M^{\theta}:=\Delta^{\frac{\theta}{2}\mathbf{H}}(M)$, 
$\Delta^{\theta}_{\mf{ns}_{2}}:=\Delta^{\theta\mathbf{J}}$ and
$N^{\theta}:=\Delta^{\theta\mathbf{J}}(N)$.

\subsection{Heisenberg and Lattice case}\label{HLtw}

By calculation, it follows that
there exist a unique $\mathcal{F}^{\pm}$-module
isomorphisms such that
$\Delta^{\pm\lambda{\boldsymbol \alpha}^{\pm}}
(\mathcal{F}^{\pm})
\overset{\simeq}\longrightarrow
\mathcal{F}^{\pm}_{\lambda};\ 
\ket{0}^{\pm}
\mapsto
\ket{\lambda}^{\pm}$
and a unique $V^{\pm}$-module isomorphism such that
$\xi^{\pm}_{r}\colon (V^{\pm})^{r}:=\Delta^{r\alpha^{\pm}_{-1}}(V^{\pm})
\overset{\simeq}\longrightarrow
V^{\pm};$
$e^{n\alpha^{\pm}}\mapsto e^{(n+r)\alpha^{\pm}}.$
In this paper, we always identify
$\Delta^{\pm\lambda{\boldsymbol \alpha}^{\pm}}
(\mathcal{F}^{\pm})$ with
$\mathcal{F}^{\pm}_{\lambda}$
by the above isomorphism
 and simply write $\mathcal{F}^{\pm}_{\lambda}$.

\subsection{Affine and $\mathcal{N}=2$ case}

Let $\theta\in\mathbb{Z}$. 
First, we define
the Lie algebra automorphism ${\sf U}_{\mf{sl}_{2}}^{\theta}$
 of $\widehat{\mf{sl}}_{2}$ by
\begin{equation*}
{\sf U}_{\mf{sl}_{2}}^{\theta}(H_{n})=H_{n}+\theta K\delta_{n,0},\ 
{\sf U}_{\mf{sl}_{2}}^{\theta}(E_{n})=E_{n+\theta},\ 
{\sf U}_{\mf{sl}_{2}}^{\theta}(F_{n})=F_{n-\theta},\ 
{\sf U}_{\mf{sl}_{2}}^{\theta}(K)=K.
\end{equation*}
It is easy to verify that 
${\sf U}_{\mf{sl}_{2}}^{\theta}\circ U_{\mf{sl}_{2}}^{\theta'}
={\sf U}_{\mf{sl}_{2}}^{\theta+\theta'}$
for any $\theta,\theta'\in\mathbb{Z}$.

Next we define the Lie superalgebra automorphism
 ${\sf U}_{\mf{ns}_{2}}^{\theta}$ of $\mf{ns}_{2}$ by
\begin{equation*}
\textstyle
{\sf U}_{\mf{ns}_{2}}^{\theta}(L_{n})
=L_{n}+\theta J_{n}
+\frac{\ \theta^{2}}{6}C\delta_{n,0},\ 
{\sf U}_{\mf{ns}_{2}}^{\theta}(J_{n})
=J_{n}+\frac{\theta}{3}C\delta_{n,0},
\end{equation*}
\begin{equation*}
{\sf U}_{\mf{ns}_{2}}^{\theta}(G^{\pm}_{r})
=G^{\pm}_{r\pm\theta},\ 
{\sf U}_{\mf{ns}_{2}}^{\theta}(C)=C.
\end{equation*}
Similarly to the affine case, we have 
${\sf U}_{\mf{ns}_{2}}^{\theta}\circ U_{\mf{ns}_{2}}^{\theta'}
={\sf U}_{\mf{ns}_{2}}^{\theta+\theta'}$
for any $\theta,\theta'\in\mathbb{Z}$.

Both ${\sf U}_{\mf{sl}_{2}}^{\theta}$ and ${\sf U}_{\mf{ns}_{2}}^{\theta}$ are called 
the \emph{spectral flow automorphisms}.
These automorphisms induce the endofunctors
\begin{equation*}
\begin{array}{ccccc}
({\sf U}_{\mf{sl}_{2}}^{\theta})^{*}&:& \widehat{\mf{sl}}_{2}\text{\sf-mod} &
\rightarrow & \widehat{\mf{sl}}_{2}\text{\sf-mod}, \\
({\sf U}_{\mf{ns}_{2}}^{\theta})^{*}&:& \mf{ns}_{2}\text{\sf-mod} &
\rightarrow & \mf{ns}_{2}\text{\sf-mod}.
\end{array}
\end{equation*}
By some computations, we have the following.

\begin{lem}\label{sptw}
Let $M\in\Obj(\mathcal{C}_{V_{k}(\mf{sl}_{2})})$ and
$N\in\Obj(\mathcal{C}_{V_{c}(\mf{ns}_{2})})$.
Then the identity mappings of underlying spaces
\begin{equation*}
\id_{M}\colon({\sf U}_{\mf{sl}_{2}}^{\theta})^{*}(M)
\overset{\simeq}\longrightarrow M^{\theta},\ 
\id_{N}\colon({\sf U}_{\mf{ns}_{2}}^{\theta})^{*}(N)
\overset{\simeq}\longrightarrow N^{\theta}
\end{equation*}
give module isomorphisms.
In particular, the restrictions of the functors
$({\sf U}_{\mf{sl}_{2}}^{\theta})^{*}$ and $({\sf U}_{\mf{ns}_{2}}^{\theta})^{*}$ are 
naturally equivalent to $\Delta^{\theta}_{\mf{sl}_{2}}$
and $\Delta^{\theta}_{\mf{ns}_{2}}$, respectively.
\end{lem}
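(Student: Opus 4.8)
The plan is to compare the two weak module structures on the common underlying space directly, reducing the verification to a finite check on strong generators. I would take $\{\mathbf H,\mathbf E,\mathbf F\}$ as generators of $V_{k}(\mf{sl}_{2})$ and $\{\mathbf J,\mathbf G^{+},\mathbf G^{-},\omega^{\mf{ns}_{2}}\}$ as generators of $V_{c_{k}}(\mf{ns}_{2})$. Since a weak module structure over a vertex superalgebra is determined by the fields attached to a generating set (the reconstruction theorem), and since the fields of $(U^{\theta}_{\mf{sl}_{2}})^{*}(M)$, resp.\ $(U^{\theta}_{\mf{ns}_{2}})^{*}(N)$, on these generators are read off directly from the spectral-flowed Lie-(super)algebra action, it is enough to show that $Y_{M}\bigl(\Delta(h^{V};z)\,a;z\bigr)$ coincides with the field prescribed by the spectral flow, for each generator $a$.

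The first step is to evaluate $\Delta(h^{V};z)a$ on each generator. The key point is that only $h^{V}_{(0)}$ and $h^{V}_{(1)}$ act nontrivially: $h^{V}_{(0)}$ multiplies $a$ by its $\tfrac{\theta}{2}H_{0}$- (resp.\ $\theta J_{0}$-) eigenvalue, $h^{V}_{(1)}$ produces at most a multiple of $\mathbf 1^{k}$ or $\mathbf 1^{c_{k}}$ carrying the relevant central constant, and $h^{V}_{(j)}a=0$ for $j\ge 2$ because the positive modes of $H$ (resp.\ $J$) annihilate the vacuum, together with the relation $G^{\pm}_{-1/2}\mathbf 1^{c_{k}}=0$ valid in $V_{c_{k}}(\mf{ns}_{2})$; only $\omega^{\mf{ns}_{2}}$ needs a slightly longer computation, in which a term quadratic in $h^{V}_{(1)}$ also appears. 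Carrying this out one finds
\begin{align*}
\Delta(h^{V_{\mf{sl}_{2}}};z)\mathbf H&=\mathbf H+\theta k\,z^{-1}\mathbf 1^{k}, &
\Delta(h^{V_{\mf{sl}_{2}}};z)\mathbf E&=z^{\theta}\mathbf E, &
\Delta(h^{V_{\mf{sl}_{2}}};z)\mathbf F&=z^{-\theta}\mathbf F,
\end{align*}
\begin{align*}
\Delta(h^{V_{\mf{ns}_{2}}};z)\mathbf J&=\mathbf J+\tfrac{\theta c_{k}}{3}z^{-1}\mathbf 1^{c_{k}}, &
\Delta(h^{V_{\mf{ns}_{2}}};z)\mathbf G^{\pm}&=z^{\pm\theta}\mathbf G^{\pm},
\end{align*}
together with $\Delta(h^{V_{\mf{ns}_{2}}};z)\omega^{\mf{ns}_{2}}=\omega^{\mf{ns}_{2}}+\theta z^{-1}\mathbf J+\tfrac{\theta^{2}c_{k}}{6}z^{-2}\mathbf 1^{c_{k}}$.

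Next I would pass to modes using $\mathbf H_{(n)}=H_{n}$, $\mathbf E_{(n)}=E_{n}$, $\mathbf F_{(n)}=F_{n}$, $\mathbf J_{(n)}=J_{n}$, $\mathbf G^{\pm}_{(n)}=G^{\pm}_{n-1/2}$ and $(\omega^{\mf{ns}_{2}})_{(n+1)}=L_{n}$. The displayed formulas then say that on $M^{\theta}$ the elements $H_{n},E_{n},F_{n}$ act as $H_{n}+\theta K\delta_{n,0}$, $E_{n+\theta}$, $F_{n-\theta}$, and that on $N^{\theta}$ the elements $J_{n},G^{\pm}_{r},L_{n}$ act as $J_{n}+\tfrac{\theta}{3}C\delta_{n,0}$, $G^{\pm}_{r\pm\theta}$, $L_{n}+\theta J_{n}+\tfrac{\theta^{2}}{6}C\delta_{n,0}$ — precisely the defining formulas of $U^{\theta}_{\mf{sl}_{2}}$ and $U^{\theta}_{\mf{ns}_{2}}$ in Appendix \ref{SFA}. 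Hence $\id_{M}$ and $\id_{N}$ are the asserted module isomorphisms, and since both $\Delta^{\theta}_{\mf{sl}_{2}},(U^{\theta}_{\mf{sl}_{2}})^{*}$ (and likewise the $\mf{ns}_{2}$ pair) act as the identity on Hom-spaces, the family of identity maps is automatically a natural transformation. The whole argument is routine; the only places demanding care are tracking the prefactor $z^{h^{V}_{(0)}}$ against the exponential factor in $\Delta(h^{V};z)$, the half-integer mode convention for $\mathbf G^{\pm}$, and the use of the precise defining relations of $V_{k}(\mf{sl}_{2})$ and $V_{c_{k}}(\mf{ns}_{2})$ (notably $G^{\pm}_{-1/2}\mathbf 1^{c_{k}}=0$) to see the vanishing $h^{V}_{(j)}a=0$ for large $j$. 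I do not anticipate any genuine obstacle here beyond careful sign-chasing.
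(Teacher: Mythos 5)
Your proposal is correct and fills in precisely the ``some computations'' that the paper leaves to the reader: evaluate $\Delta(h^{V};z)$ on the strong generators using only $h^{V}_{(0)}$ and $h^{V}_{(1)}$ (plus the quadratic $h_{(1)}$ contribution for $\omega^{\mf{ns}_{2}}$), then compare modes with the defining formulas of $U^{\theta}_{\mf{sl}_{2}}$, $U^{\theta}_{\mf{ns}_{2}}$. This is the same approach the paper intends, and your generator-by-generator formulas and mode conventions all check out.
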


\bibliographystyle{alpha}

\bibliography{ref}

\newcommand{\etalchar}[1]{$^{#1}$}
\begin{thebibliography}{DVPYZ86}

\bibitem[ABd{\etalchar{+}}76]{adem76}
M.~Ademollo, L.~Brink, A.~d'Adda, R.~d'Auria, E.~Napolitano, S.~Sciuto,
  E.~Del~Giudice, P.~Di~Vecchia, S.~Ferrara, F.~Gliozzi, et~al.
\newblock {S}upersymmetric strings and colour confinement.
\newblock {\em Phys. Lett. B}, 62(1):105--110, 1976.

\bibitem[ACKL17]{arakawa2017orbifolds}
T.~Arakawa, T.~Creutzig, K.~Kawasetsu, and A.R. Linshaw.
\newblock {O}rbifolds and {C}osets of {M}inimal {$\mathcal{W}$}-{A}lgebras.
\newblock {\em Commun. Math. Phys.}, 355(1):339--372, 2017.

\bibitem[Ada99]{Ad99}
D.~Adamovi{\'c}.
\newblock {R}epresentations of the {$N=2$} superconformal vertex algebra.
\newblock {\em Int. Math. Res. Not.}, 2(2):61--79, 1999.

\bibitem[Ada01]{Ad01}
D.~Adamovi{\'c}.
\newblock {V}ertex algebra approach to fusion rules for {$N=2$} superconformal
  minimal models.
\newblock {\em J. Algebra}, 239(2):549--572, 2001.

\bibitem[AM95]{AM95}
D.~Adamovi{\'c} and A.~Milas.
\newblock {V}ertex operator algebras associated to modular invariant
  representations for {$A^{(1)}_{1}$}.
\newblock {\em Math. Res. Lett.}, 2:563--576, 1995.

\bibitem[AM09]{adamovic2009lattice}
D.~Adamovi{\'c} and A.~Milas.
\newblock {L}attice construction of logarithmic modules for certain vertex
  algebras.
\newblock {\em Selecta Math.}, 15(4):535--561, 2009.

\bibitem[AY92]{awata1992fusion}
H.~Awata and Y.~Yamada.
\newblock {F}usion rules for the fractional level algebra.
\newblock {\em Mod. Phys. Lett. A}, 7(13):1185--1195, 1992.

\bibitem[BFK86]{boucher1986determinant}
W.~Boucher, D.~Friedan, and A.~Kent.
\newblock {D}eterminant formulae and unitarity for the {$N=2$} superconformal
  algebras in two dimensions or exact results on string compactification.
\newblock {\em Phys. Lett. B}, 172(3):316--322, 1986.

\bibitem[CHK{\etalchar{+}}15]{carpi2015n}
S.~Carpi, R.~Hillier, Y.~Kawahigashi, R.~Longo, and F.~Xu.
\newblock {$N=2$} superconformal nets.
\newblock {\em Commun. Math. Phys.}, 336(3):1285--1328, 2015.

\bibitem[CKLR16]{creutzig2016schur}
T.~Creutzig, S.~Kanade, A.R. Linshaw, and D.~Ridout.
\newblock Schur-{W}eyl duality for {H}eisenberg cosets.
\newblock {\em arXiv preprint arXiv:1611.00305}, 2016.

\bibitem[CL19]{CREUTZIG2019396}
T.~Creutzig and A.R. Linshaw.
\newblock Cosets of affine vertex algebras inside larger structures.
\newblock {\em J. Algebra}, 517:396 -- 438, 2019.

\bibitem[CR13]{creutzig2013modular}
T.~Creutzig and D.~Ridout.
\newblock {M}odular data and {V}erlinde formulae for fractional level {WZW}
  models {II}.
\newblock {\em Nucl. Phys. B}, 875(2):423--458, 2013.

\bibitem[DLM97a]{dong1997regularity}
C.~Dong, H.~Li, and G.~Mason.
\newblock {R}egularity of rational vertex operator algebras.
\newblock {\em Adv. Math.}, 132(1):148--166, 1997.

\bibitem[DLM97b]{dong1997vertex}
C.~Dong, H.~Li, and G.~Mason.
\newblock {V}ertex operator algebras associated to admissible representations
  of {$\hat{sl}_{2}$}.
\newblock {\em Comm. Math. Phys.}, 184(1):65--93, 1997.

\bibitem[Dob87]{dobrev1987characters}
V.K Dobrev.
\newblock {C}haracters of the unitarizable highest weight modules over the
  {$N=2$} superconformal algebras.
\newblock {\em Phys. Lett. B}, 186(1):43--51, 1987.

\bibitem[D{\"o}r98]{dorrzapf1998embedding}
M.~D{\"o}rrzapf.
\newblock The embedding structure of unitary {$N=2$} minimal models.
\newblock {\em Nucl. Phys. B}, 529(3):639--655, 1998.

\bibitem[DVPYZ86]{DPYZ86}
P.~Di~Vecchia, J.~L. Petersen, M.~Yu, and H.~B. Zheng.
\newblock {E}xplicit construction of unitary representations of the {$N=2$}
  superconformal algebra.
\newblock {\em Phys. Lett. B}, 174(3):280--284, 1986.

\bibitem[FLM89]{frenkel1989vertex}
I.~Frenkel, J.~Lepowsky, and A.~Meurman.
\newblock {\em Vertex operator algebras and the Monster}, volume 134.
\newblock Academic press, 1989.

\bibitem[FM94]{feigin1994fusion}
B.~Feigin and F.~Malikov.
\newblock {F}usion {A}lgebra at a {R}ational {L}evel and {C}ohomology of
  {N}ilpotent {S}ubalgebras of {$\widehat{\mathfrak{sl}}(2)$}.
\newblock {\em Lett. Math. Phys.}, 31(4):315--325, 1994.

\bibitem[FSST99]{FSST99}
B.~L. Feigin, A.~M. Semikhatov, V.~A. Sirota, and I.~Yu. Tipunin.
\newblock {R}esolutions and characters of irreducible representations of the
  {$N=2$} superconformal algebra.
\newblock {\em Nucl. Phys. B}, 536(3):617--656, 1999.

\bibitem[FST98]{FST98}
B.~L. Feigin, A.~M. Semikhatov, and I.~Yu. Tipunin.
\newblock {E}quivalence between chain categories of representations of affine
  {$sl(2)$} and {$N=2$} superconformal algebras.
\newblock {\em J. Math. Phys.}, 39(7):3865--3905, 1998.

\bibitem[Fut96]{futorny1996irreducible}
V.~Futorny.
\newblock {I}rreducible non-dense {$A^{(1)}_{1}$}-modules.
\newblock {\em Pacific J. Math.}, 172(1):83--99, 1996.

\bibitem[FZ92]{frenkel1992vertex}
I.~Frenkel and Y.~Zhu.
\newblock {V}ertex operator algebras associated to representations of affine
  and {V}irasoro algebras.
\newblock {\em Duke Math. J.}, 66(1):123--168, 1992.

\bibitem[Gab01]{G01}
M.~Gaberdiel.
\newblock {F}usion rules and logarithmic representations of a {WZW} model at
  fractional level.
\newblock {\em Nucl. Phys. B}, 618(3):407--436, 2001.

\bibitem[GK07]{gorelik2007simplicity}
M.~Gorelik and V.~Kac.
\newblock {O}n simplicity of vacuum modules.
\newblock {\em Adv. Math.}, 211(2):621--677, 2007.

\bibitem[GKO86]{GKO86}
P.~Goddard, A.~Kent, and D.~Olive.
\newblock {U}nitary representations of the {V}irasoro and super-{V}irasoro
  algebras.
\newblock {\em Comm. Math. Phys.}, 103(1):105--119, 1986.

\bibitem[HT91]{hosono1991lie}
S.~Hosono and A.~Tsuchiya.
\newblock {L}ie algebra cohomology and {$N=2$} {SCFT} based on the {GKO}
  construction.
\newblock {\em Comm. Math. Phys.}, 136(3):451--486, 1991.

\bibitem[Ioh10]{iohara2010unitarizable}
K.~Iohara.
\newblock {U}nitarizable highest weight modules of the {$N=2$} super {V}irasoro
  algebras: untwisted sectors.
\newblock {\em Lett. Math. Phys.}, 91(3):289--305, 2010.

\bibitem[Kac98]{Kac97V}
V.~Kac.
\newblock {\em Vertex algebras for beginners, \emph{2nd ed.}}, volume~10 of
  {\em University Lecture Series}.
\newblock AMS, Providence, RI, 1998.

\bibitem[Kir88]{kiritsis1988character}
E.B. Kiritsis.
\newblock Character formulae and the structure of the representations of the
  {$N=1$}, {$N=2$} superconformal algebras.
\newblock {\em Int. J. Mod. Phys. A}, 3(08):1871--1906, 1988.

\bibitem[KS89]{KS89}
Y.~Kazama and H.~Suzuki.
\newblock {N}ew {$N=2$} superconformal field theories and superstring
  compactification.
\newblock {\em Nucl. Phys. B}, 321(1):232--268, 1989.

\bibitem[KW88]{KW88}
V.~Kac and M.~Wakimoto.
\newblock {M}odular invariant representations of infinite-dimensional {L}ie
  algebras and superalgebras.
\newblock {\em Proc. Natl. Acad. Sci.}, 85(14):4956--4960, 1988.

\bibitem[KW94]{kac1994vertex}
V.~Kac and W.~Wang.
\newblock {V}ertex operator superalgebras and their representations.
\newblock {\em Contemp. Math.}, 175:161--192, 1994.

\bibitem[KW14]{kac2014representations}
V.~Kac and M.~Wakimoto.
\newblock {R}epresentations of affine superalgebras and mock theta functions.
\newblock {\em Trans. Groups}, 19(2):383--455, 2014.

\bibitem[Li97]{li1997physics}
H.~Li.
\newblock {T}he physics superselection principle in vertex operator algebra
  theory.
\newblock {\em J. Algebra}, 196(2):436--457, 1997.

\bibitem[Mal91]{malikov1991verma}
F.~Malikov.
\newblock {V}erma modules over {K}ac-{M}oody algebras of rank 2.
\newblock {\em Leningrad Math. J}, 2(2):269--286, 1991.

\bibitem[Mat87]{matsuo1987character}
Y.~Matsuo.
\newblock {C}haracter formula of {$c<1$} unitary representation of {$N=2$}
  superconformal algebra.
\newblock {\em Prog. Theor. Phys.}, 77(4):793--797, 1987.

\bibitem[MN99]{matsuo17036axioms}
A.~Matsuo and K.~Nagatomo.
\newblock {\em {A}xioms for a vertex algebra and the locality of quantum
  fields}, volume~4 of {\em MSJ Memoirs}.
\newblock Mathematical Society of Japan, Tokyo, 1999.

\end{thebibliography}

\end{document}